\numberwithin{equation}{section}
\newcommand{\R}{\mathbb{R}}
\renewcommand{\theequation}{\arabic{section}.\arabic{equation}}
\renewcommand\thesection{\arabic{section}}
\renewcommand\subsection{\@startsection{subsection}{2}{\z@}%
  {-3.25ex\@plus -1ex \@minus -.2ex}%
  {1.5ex \@plus .2ex}%
  {\normalfont\large\bfseries\itshape}}
\newtheorem{Thm}{Theorem}[section]
\newtheorem{Lem}[Thm]{Lemma}
\newtheorem{Prop}[Thm]{Proposition}
\newtheorem{Rem}[Thm]{Remark}
\begin{document}
\title[Concentrating solutions of nonlinear Schrödinger
systems with mixed interactions]
{Concentrating solutions of nonlinear Schrödinger
systems with mixed interactions}

\author{Qing Guo}
\address[Qing Guo]{College of Science, Minzu University of China, Beijing 100081, China}
\email{guoqing0117@163.com}

\author{Angela Pistoia}
\address[Angela Pistoia]{Dipartimento SBAI, Sapienza Universit\`a di Roma,
via Antonio Scarpa 16, 00161 Roma, Italy. }
\email{angela.pistoia@uniroma1.it}

  \author{Shixin Wen}
  \address[Shixin Wen]{School of Mathematics and Statistics, Central China Normal University, Wuhan 430079, China} \email{sxwen@mails.ccnu.edu.cn}

\thanks{A.Pistoia has been   partially supported by
the MUR-PRIN-20227HX33Z ``Pattern formation in nonlinear phenomena'' and
the INdAM-GNAMPA group. Q. Guo has been supported by the National Natural Science Foundation of China (Grant No. 12271539), and this research was also supported by the China Scholarship Council (CSC) during the visits of the first and third authors to Sapienza University of Rome.}
  \subjclass{35B25, 35J47, 35Q55}

  \keywords {Nonlinear Schrödinger systems; singularly perturbed problems; Lyapunov-Schmidt
 reduction}
 \date{\today}

 \begin{abstract}
In this paper we study the existence of solutions to  nonlinear Schrödinger systems with  mixed couplings of
attractive and repulsive forces, which arise from the models
in Bose-Einstein condensates and nonlinear optics. In particular, we build solutions whose
first component has one bump  and the other components have several peaks
forming a regular polygon around  the single bump of the first component.
 \end{abstract}

\maketitle

\section{Introduction}
The  multispecies
Bose–Einstein condensates can be  described by the Gross-Pitaevskii system,
namely a system of  $q$  coupled  nonlinear Schr\"{o}dinger equations
$$
 i \hslash \partial_{t}\psi_{i}=-\frac{\hslash^2}{2m_{i}}\Delta \psi_{i}+ {V_i}(x)\psi_{i}-\mu_{i}|\psi_{i}|^{2}\psi_{i}-\sum\limits_{j=1\atop j\not=i}^q\beta_{ij}|\psi_{j}|^{2}\psi_{i} \ \hbox{in}\ \mathbb R^N,\ i=1,\dots,q
$$
where $t>0$ and the space dimension $N=1,2,3$.
Here,  $\hslash$ is the Planck constant and
$\psi_i=\psi_i(x,t)\in\mathbb C$ is the $i-$th component wave function,  with mass $m_i$ and external potential $V_i$.  $\mu_i$ is the {\em intraspecies} scattering length
and $\beta_{ij}=\beta_{ji}$ is the {\em interspecies} scattering length  for $i\not=j.$ The  interspecies interaction force is
 attractive if $\beta_{ij}>0$ while it is repulsive if $\beta_{ij}<0.$
 For more details on this model we refer to the book by Pitaevskii and Stringari \cite{pi-st}. \\
 Looking for standing waves $\psi_{i}(x,t)=
 e^{i\lambda_{i}t/\hslash}u_i(x)$
  of frequencies $\lambda_{i}$, we have that the real functions $u_i$ solve the nonlinear Schrödinger system
 \begin{equation}\label{ss}
 -\frac{\hslash^2}{2m_{i}}\Delta u_i+(\lambda_i+V_i(x))u_i=\mu_{i}u^{3}_i+\sum\limits_{j=1\atop j\not=i}^q\beta_{ij}u_j^{2}u_{i} \ \hbox{in}\ \mathbb R^N,\ i=1,\dots,q.
\end{equation}
 Systems like  \eqref{ss} in the presence of attractive
  self-interaction, i.e. $\mu_{i}>0$,  have been the subject of extensive
 mathematical studies in recent years. A huge number of results has been obtained in the
  the autonomous case, i.e. the potentials $V_{i}$ are positive constants. We refer the readers to the recent paper \cite{li-wei-wu} were an exhaustive list of references is given. On the other hand a few results are avalaible for the non-autonomous case.  In  a   singularly perturbed regime, i.e. $\hslash$ is very small with respect to the other parameters,   the existence of solutions to \eqref{ss} which concentrate as $\hslash\to0$  is quite expected.
 When  system \eqref{ss} has  only  two components, the concentration phenomena has been studied by
Lin and Wei \cite{lin-wei},  Montefusco,   Pellacci and Squassina \cite{MPS}, Pomponio \cite{pom},
Ikoma and Tanaka \cite{iko-tan} and
Byeon \cite{byeon}. The known results can be summerised in the existence of solutions whose both components concentrate at critical points (possibly the same) of the potentials or of a suitable linear combination of them in both attractive or repulsive regime.\\

Recently, Pellacci, Pistoia, Vaira and Verzini in \cite{ppvv} studied system \eqref{ss} with two equations  in a {\em partial} singularly perturbed regime. More precisely, they rewrite system \eqref{ss} as
\begin{equation}\label{p2}
\left\{\begin{aligned}
&-\Delta u+V(x)u=\mu_{1}u^{3}+\beta uv^{2}\ \hbox{in}\ \R^{N},
\\
&-\epsilon^{2}\Delta v+W(x)v=\mu_{2}v^{3}+\beta vu^{2}\ \hbox{in}\  \R^{N},
\end{aligned}\right.
\end{equation}
and  find   a solution $(u,v)$  in a repulsive regime, i.e. $\beta<0$, whose first component  looks like
a  solution to the single equation
$-\Delta Y+V(x)Y=\mu_{1}Y^{3}$ in $ \mathbb R^N$
 and whose second component concentrates at two   opposite points which collapse to the origin  as $\epsilon$ goes to 0.
Roughly speaking, the first component of \eqref{p2} does not concentrate and plays the  role of  an additional potential  in the singularly perturbed second equation,
so that the concentration behaviour of the second component   is ruled by the {\em shadow}  potential $\omega(x):=W(x)-\beta Y^{2}(x)$.
\\

In this paper we address a couple of natural questions.
\begin{itemize}
\item[(Q1)]{ \em Can we find a solution  whose second component concentrates at more than two points collapsing to the origin  as $\epsilon$ goes to 0?}
\item[(Q2)] {\em Can we extend this result  to a system with more than two equations?}
\end{itemize}
In the following, we will give a positive answer to both questions.\\

Let us introduce the necessary tools to state our main results.

Assume:
\begin{enumerate}
\item $V\in C^0\left(\mathbb{R}^N\right)$ is a radially symmetric function with
$ \inf _{\mathbb{R}^N} V(y)>0.
$ Moreover,  for every $f \in L^q\left(\mathbb{R}^N\right), 2 \leq q<+\infty$, there exists a unique solution $u \in W^{2, q}\left(\mathbb{R}^N\right)$ of the equation
$$
-\Delta u+V(y) u=f\ \hbox{in}\ \mathbb R^N,
$$
such that$$
\|u\|_{W^{2, q}\left(\mathbb{R}^N\right)} \le C\|f\|_{L^q\left(\mathbb{R}^N\right)},
$$
where $C$ only depends on $q$ and $N.$
\item There exists a unique positive radial solution $Y \in C^3\left(\mathbb{R}^N\right) \cap H^2\left(\mathbb{R}^N\right)$  to
$$
-\Delta Y+V(y) Y=\mu_1 Y^3\ \hbox{in}\ \mathbb R^N,
$$
which is non-degenerate in the space of even functions
$$
\begin{aligned}
& H_e^1\left(\mathbb{R}^N\right) :=\left\{u \in H^1\left(\mathbb{R}^N\right): u\left(y_1, \ldots, y_i, \ldots y_N\right)=u\left(y_1, \ldots,-y_i, \ldots, y_N\right),\,i=1, \ldots, N\right\}.
\end{aligned}
$$

\item  $W\in C^3\left(\mathbb{R}^N\right)$ is a radially symmetric function with {$ \inf _{\mathbb{R}^N} W(y)>0$}.  For sake of simplicity, we also assume that $  W,D^\alpha W \in L^\infty\left(\mathbb{R}^N\right),$  $\alpha=1,2,3.$

\end{enumerate}

Throughout the paper,  we will   assume $\beta<0$. Moreover,  we set
\begin{equation}
    \label{omega}\omega(y):=W(y)-\beta Y^{2}(y)\quad\hbox{and}\quad\omega_0:=\omega(0).\end{equation}
Since
 $\omega_0 >0, $ we define
 $U_{\mu_2}\in H^1\left(\mathbb{R}^N\right) $ as the radial solution of
\begin{equation}\label{U-1}
-\Delta U_{\mu_2}+\omega_0U_{\mu_2}=\mu_2 U_{\mu_2}^3,\quad  U_{\mu_2}>0,\quad \text { in $\mathbb{R}^N$.}
\end{equation}
It is useful to remind that it satisfies
$$   \lim_{|y|\rightarrow+\infty} U_{\mu_{2}}(y)\,e^{\sqrt{\omega_0}|y|}\,|y|^{\frac{N-1}{2}}=C_0>0,\quad  \lim_{|y|\rightarrow+\infty}\frac{U_{\mu_{2}}^{\prime}(y)}{U_{\mu_{2}}(y)}=-1.
$$

We introduce the rotation in the $(x_1,x_2)-$plane:
\begin{equation}\label{roto}\mathcal{R}_{\theta}:=\begin{pmatrix}
        \cos\theta&\sin\theta&0\\
        -\sin\theta&\cos\theta&0\\
        0&0&I_{(N-2)\times (N-2)}
    \end{pmatrix}
\end{equation}
and   the Banach space
$$H_{s}^{2}\left(\mathbb{R}^N\right):= \left\{u \in H ^2\left(\mathbb{R}^N\right)\Big|~{u} \text { is even in each variable},\  u\left(y\right)=
u\left(\mathcal{R}_{\frac{2\pi}{k}}    y\right)\right\}.
$$
First we   consider the system  \eqref{p2} with only   two components.\\
Our result reads as follows.

\begin{Thm}\label{mt1}
Assume $ \Delta\omega(0)<0$.
 For any even integer $k$ there exists $\epsilon_k>0$ such that for any $\epsilon\in(0,\epsilon_k)$, \eqref{p2} has a solution $(u_\epsilon,v_\epsilon)\in H_{s}^{2}\left(\mathbb{R}^N\right)\times H_{s}^{2}\left(\mathbb{R}^N\right)$ as
$$   u_\epsilon(y)= Y(y)+\Theta(y)\ \hbox{and}\  v_\epsilon(y)=  \sum\limits_{j=1}^k U_{\mu_{2}}\left(y-\frac{P_{j}}{\epsilon}\right)+\Xi(y)
   %\quad v_{\epsilon,i}(y)=v\left(\widehat{\mathcal{R}_{i}}\,y\right),\,\forall\, i=2,\cdots,m.
$$
where
\begin{equation*}
P_{j}=\rho_{\epsilon}\left(\cos\frac{2(j-1)\pi}{k},~~\sin\frac{2(j-1)\pi}{k},~~0\right) \ \hbox{with}  \lim_{\epsilon\rightarrow0^{+}}\frac{\rho_{\epsilon}}{\epsilon|\ln\epsilon|}=\frac{1}{\sqrt{\omega_0} \sin{\frac{\pi}{k}}}
\end{equation*}
and  $\Theta,\Xi$ are small functions in suitable norms.

\end{Thm}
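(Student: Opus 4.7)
The plan is a Lyapunov--Schmidt reduction in the $k$-fold symmetric class $H_s^2(\mathbb R^N)\times H_s^2(\mathbb R^N)$. After the natural rescaling $\tilde v(y):=v(\epsilon y)$, the second equation reads
\[
-\Delta \tilde v + W(\epsilon y)\tilde v - \beta\, u(\epsilon y)^2 \tilde v = \mu_2 \tilde v^3,
\]
so when $u$ is close to $Y$ the linear part is driven by the shadow potential $\omega(\epsilon y)$, with limit $\omega_0$ at the cluster's centre. This suggests the ansatz $u = Y + \Theta$, $\tilde v = V_\rho + \Xi$ with $V_\rho(y) := \sum_{j=1}^k U_{\mu_2}(y - P_j/\epsilon)$ and $\rho$ varying in a small window around the conjectural value $\bar\rho_\epsilon := \epsilon|\ln\epsilon|/(\sqrt{\omega_0}\sin(\pi/k))$. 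Because $k$ is even, the imposed symmetries (evenness in each variable plus rotation by $2\pi/k$ in the $(x_1,x_2)$-plane) rigidly lock the bumps into a regular polygon and collapse the translational kernel of the linearisation at $V_\rho$ to the single direction generated by the synchronous radial motion $\sum_j (P_j/|P_j|)\cdot\nabla U_{\mu_2}(\cdot - P_j/\epsilon)$.

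I would then carry out the reduction in two coupled stages. For frozen $\Xi$ and $\rho$ the first equation is solvable for $\Theta$: assumption~(2) makes $L_Y := -\Delta + V - 3\mu_1 Y^2$ invertible on even functions, and since $\tilde v(x/\epsilon)^2$ is concentrated on the shrinking cluster $\{P_j\}\to 0$, the source in the $\Theta$-equation is exponentially small in weighted norms, so a contraction argument in $W^{2,q}(\mathbb R^N)$ yields a unique small $\Theta = \Theta[\Xi,\rho]$. Substituting this back, one projects the rescaled second equation onto the orthogonal complement of the reduced one-dimensional kernel. Here the linearised operator $-\Delta + \omega(\epsilon y) - 3\mu_2 V_\rho^2$ is uniformly invertible on this complement by the standard spectral theory for well-separated positive bumps, using the non-degeneracy of $U_{\mu_2}$ and the decay estimates recalled after \eqref{U-1}; a second contraction produces $\Xi = \Xi[\rho]$ with $\|\Xi\|_*$ of order $\epsilon^2|\ln\epsilon|^2 + e^{-(2-\eta)\sqrt{\omega_0}\bar\rho_\epsilon\sin(\pi/k)/\epsilon}$ in a suitable weighted norm.

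What remains is a scalar equation for $\rho$, or equivalently a critical point of the reduced energy
\[
\mathcal E_\epsilon(\rho) \;=\; \tfrac{k}{2}\bigl(\omega(\rho) - \omega_0\bigr)\|U_{\mu_2}\|_{L^2(\mathbb R^N)}^{2} \;+\; k\,C_1\,U_{\mu_2}\!\left(\tfrac{2\rho\sin(\pi/k)}{\epsilon}\right)\bigl(1+o(1)\bigr) \;+\; \text{higher order},
\]
with $C_1>0$ the nearest-neighbour interaction constant. Radiality of $\omega$ gives $\omega(\rho)-\omega_0=\tfrac{\rho^2}{2N}\Delta\omega(0)+O(\rho^3)$, and since $\Delta\omega(0)<0$ this is a negative $O(\rho^2)$ outward pressure that balances the positive inward pull $U_{\mu_2}(2\rho\sin(\pi/k)/\epsilon)\sim(\epsilon/\rho)^{(N-1)/2}e^{-2\sqrt{\omega_0}\rho\sin(\pi/k)/\epsilon}$. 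Setting $\mathcal E_\epsilon'(\rho)=0$ and taking logarithms forces $\rho_\epsilon$ to satisfy the asserted logarithmic asymptotic, and an intermediate-value argument in the shrinking $\rho$-window yields a genuine root.

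The principal obstacle is the delicate balance in $\mathcal E_\epsilon$: matching a polynomial-in-$\rho$ force against an exponential-in-$\rho/\epsilon$ force is extremely sensitive to the sharpness of the expansion, so one must track the algebraic prefactors $\rho^{-(N-1)/2}$ to recover the exact constant $1/(\sqrt{\omega_0}\sin(\pi/k))$, and show that higher-order neighbour interactions (at distances $2\rho\sin(j\pi/k)/\epsilon$ for $j\geq 2$) and the back-reaction of $\Theta$ on the $\Xi$-equation through the coupling $\beta u^2\tilde v$ are both genuinely subleading. This last point is where the two-component nature of the problem, as opposed to a single scalar equation, really intervenes in the analysis.
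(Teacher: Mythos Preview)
Your outline is correct and follows the same Lyapunov--Schmidt strategy as the paper, but there are a few tactical differences worth noting. First, the paper builds two explicit correction layers into the ansatz: a function $\Phi_\epsilon$ solving $L_Y\Phi_\epsilon = Y\,V_\rho(\cdot/\epsilon)^2$ is added to $Y$, and then a corresponding $\Psi_\epsilon$ is added to $V_\rho$; this brings the first-equation error down from $O(\epsilon^{N/2})$ to $O(\epsilon^N)$ in $L^2$ and makes the subsequent bookkeeping cleaner than in your bare ansatz $u=Y+\Theta$. (Your claim that the $\Theta$-source is ``exponentially small'' is not quite right---it is $O(\epsilon^{N/2})$---so without these corrections you would have to carry a larger $\Theta$ through the second equation, which is exactly the back-reaction issue you flag at the end.) Second, the paper solves for the pair $(\phi,\psi)$ jointly by a single contraction on $K^\perp$ rather than your two-stage scheme; both work, but the joint version avoids having to iterate the dependence $\Theta[\Xi[\rho]]$. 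Third, the paper does not pass through a reduced energy: it projects the equation directly onto the one-dimensional kernel generator $Z_\epsilon=\epsilon\sum_j\partial_\rho U_{\mu_2,P_{\epsilon,j}}$ and computes $\int(\mathcal L_2-\mathcal E_2-\mathcal N_2)Z_\epsilon$ term by term, obtaining the balance
\[
\Delta\omega(0)\,A\,\epsilon\rho \;+\; B_1\,e^{-2\sqrt{\omega_0}\,\frac{\rho}{\epsilon}\sin\frac{\pi}{k}}\Bigl(\tfrac{\rho}{\epsilon}\Bigr)^{-\frac{N-1}{2}}=0,
\]
which is equivalent to your $\mathcal E_\epsilon'(\rho)=0$ after undoing the factor of $\epsilon$ in $Z_\epsilon$. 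The variational phrasing is slightly more conceptual, while the direct projection makes it easier to verify that the neglected terms (farther-neighbour interactions, the $\Phi_\epsilon,\Psi_\epsilon$ contributions, and the remainder $(\phi,\psi)$) are genuinely $o(\epsilon\rho)$, which is the delicate point you correctly identify.
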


Next we consider the system with $m+1$ components in a {\em symmetric} shape
\begin{equation}
    \label{pm}
 \left\{   \begin{aligned}
&-\Delta u+V(y)u=\mu_{1}u^{3}+\beta u\left(\sum\limits_{i=1}^{m}v_{i}^{2}\right),\quad \hbox{in}\ \mathbb R^{N},
\\
&
-\epsilon^{2}\Delta v_i+W(y)v_i=\mu_{2} v_{i}^{3}+\beta v_{i}u^{2}+\alpha\sum\limits_{l=1\atop l\neq i}^{m}v_{i}\,v_{l}^{2},\quad \hbox{in}\ \mathbb R^{N},\quad i=1,\cdots,m.
\end{aligned}\right.
\end{equation}
 Our second result reads as follows.
\begin{Thm}\label{mt}
Assume  $\alpha\cdot \Delta\omega(0)<0.$
 For any even integer $k$ there exists $\epsilon_k>0$ such that for any $\epsilon\in(0,\epsilon_k)$, \eqref{pm} has a solution $(u_\epsilon,v_{1,\epsilon},\cdots,v_{m,\epsilon})\in H^2_s(\mathbb R^N)\times H^2_s(\mathbb R^N)$ as
$$
u_\epsilon(y)= Y(y)+\Theta(y)$$
and
$$v_{1,\epsilon}(y)=  \sum\limits_{j=1}^k U_{\mu_{2}}\left(y-\frac{P_{j}}{\epsilon}\right)+\Xi(y),\  v_{i,\epsilon}(y)=v_{1,\epsilon} \left(\mathcal{R}_{\frac{2\pi(i-1)}{mk}} y\right),\   i=2,\cdots,m,
$$
where
\begin{equation*}
P_{j}=\rho_{\epsilon}\left(\cos\frac{2(j-1)\pi}{k},~~\sin\frac{2(j-1)\pi}{k},~~0\right) \ \hbox{with}  \lim_{\epsilon\rightarrow0^{+}}\frac{\rho_{\epsilon}}{\epsilon|\ln\epsilon|}=\frac{1}{2\sqrt{\omega_0} \sin{\frac{\pi}{mk}}}
\end{equation*}
and  $\Theta,\Xi$ are small functions in suitable norms.

\end{Thm}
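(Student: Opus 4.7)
The plan is to adapt the two-component Lyapunov--Schmidt reduction of Theorem \ref{mt1} to the symmetric $(m{+}1)$-component setting by exploiting the rotational ansatz $v_i=v_1\circ \mathcal R_{2\pi(i-1)/(mk)}$. Since \eqref{pm} is equivariant under the cyclic group generated by $\mathcal R_{2\pi/(mk)}$, if $(u,v_1)\in H^2_s(\mathbb R^N)\times H^2_s(\mathbb R^N)$ satisfies the first equation of \eqref{pm} together with the second equation for the index $i=1$ (with each $v_l$ replaced by $v_1\circ\mathcal R_{2\pi(l-1)/(mk)}$), then the remaining $m-1$ equations are automatically fulfilled. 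Hence the unknown collapses to the pair $(u,v_1)$, and the whole problem is reduced to a ``twisted'' two-component system closely analogous to \eqref{p2}.

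As approximate solution I would take $u_0=Y$ and $v_{1,0}(y)=\sum_{j=1}^k U_{\mu_2}(y-P_j/\epsilon)$ with $P_j$ on a regular $k$-gon of radius $\rho_\epsilon$, varying in a small neighbourhood of the expected asymptotic value. The rotational construction places the bumps of the full profile $(v_{1,0},\dots,v_{m,0})$ at the vertices of a regular $mk$-gon of the same radius; its rescaled nearest-neighbour distance is $d_\epsilon=2(\rho_\epsilon/\epsilon)\sin(\pi/(mk))$, and, because $m\ge 2$, two adjacent vertices always belong to \emph{different} species. Consequently the dominant interaction between bumps is governed by the cross-coupling coefficient $\alpha$ rather than $\mu_2$: same-species bumps within $v_1$ are separated by the much larger distance $2(\rho_\epsilon/\epsilon)\sin(\pi/k)$ and produce only exponentially smaller corrections. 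Setting $u=Y+\Theta$, $v_1=v_{1,0}+\Xi$, with $\Xi$ orthogonal to the one-dimensional approximate kernel generated by $\sum_j\partial_\rho U_{\mu_2}(\cdot-P_j/\epsilon)$ (the symmetry built into $H^2_s$ kills all other translational directions), I would then solve the projected system by a standard contraction argument: non-degeneracy of $Y$ in $H^1_e$ yields invertibility of the linearisation in the first component, while non-degeneracy of $U_{\mu_2}$ combined with the imposed symmetry yields invertibility in the second.

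The problem is thereby reduced to a scalar equation in $\rho_\epsilon$ coming from the Lagrange multiplier associated with the kernel direction. At leading order this equation balances the derivative of the potential energy, of size $\epsilon^2\,\Delta\omega(0)\,\rho_\epsilon$ (obtained by Taylor-expanding the rescaled potential $\omega(\epsilon\cdot)$ at the bump centres and using that $\omega$ is radial), against the derivative of the cross-interaction energy, of size $\alpha\cdot C\cdot e^{-\sqrt{\omega_0}d_\epsilon}$, the exponent arising from the pointwise decay $U_{\mu_2}(y)\sim C_0|y|^{-(N-1)/2}e^{-\sqrt{\omega_0}|y|}$ evaluated at the nearest-neighbour distance. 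Solving this transcendental balance by logarithmic inversion produces the stated asymptotics $\rho_\epsilon\sim \epsilon|\ln\epsilon|/(2\sqrt{\omega_0}\sin(\pi/(mk)))$, and the sign condition $\alpha\cdot\Delta\omega(0)<0$ ensures that the two leading terms have opposite signs, yielding a non-degenerate root. The main technical obstacle will be precisely isolating the $\alpha$-contribution in the reduction: one has to expand $\alpha\,v_1\sum_{l\ne 1}v_l^2$ around the approximate solution, keep the bump-to-nearest-neighbour pair interaction exactly, verify that second-neighbour as well as same-species interactions (at the longer distance $2\rho_\epsilon\sin(\pi/k)$) contribute at strictly lower order, and finally close the fixed-point argument uniformly in the narrow window of $\rho_\epsilon$ picked out by the asymptotic above---at which point standard perturbative arguments recover the full solution with the claimed form.
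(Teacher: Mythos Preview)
Your overall strategy---collapsing the system to a two-component problem via the rotation ansatz and then running a one-parameter Lyapunov--Schmidt reduction---is exactly what the paper does. But two concrete points in your sketch would prevent the argument from closing as written.

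First, the cross-interaction does not decay like $e^{-\sqrt{\omega_0}d_\epsilon}$. The coupling $\alpha v_1 v_l^2$ is \emph{quadratic} in $v_l$, so after projecting the error onto $Z_\epsilon\sim\sum_j\partial_1 U_{\mu_2}(\cdot-P_j/\epsilon)$ the dominant integral is
\[
\int_{\mathbb R^N} U_{\mu_2}(y)\,U_{\mu_2}^2(y-\xi)\,\partial_1 U_{\mu_2}(y)\,dy \;=\; -\tfrac12\int_{\mathbb R^N} U_{\mu_2}^2(y-\xi)\,\partial_1 U_{\mu_2}^2(y)\,dy,
\]
which behaves like $e^{-2\sqrt{\omega_0}|\xi|}$ (the decay rate of $U_{\mu_2}^2$), not $e^{-\sqrt{\omega_0}|\xi|}$. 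With $|\xi|=d_\epsilon=2(\rho/\epsilon)\sin(\pi/(mk))$ the interaction term is therefore of order $\alpha\,e^{-4\sqrt{\omega_0}(\rho/\epsilon)\sin(\pi/(mk))}$, and balancing against the potential contribution (which is of order $\epsilon\rho$, not $\epsilon^2\rho$) yields exactly $\rho_\epsilon/(\epsilon|\ln\epsilon|)\to 1/(2\sqrt{\omega_0}\sin(\pi/(mk)))$. Your stated exponent would instead produce the constant $1/(\sqrt{\omega_0}\sin(\pi/(mk)))$, inconsistent with the conclusion you quote.

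Second---and this is the real obstruction---the corrected exponent has a structural consequence. The $L^2$ size of the cross-term $\alpha\,v_{1,0}\sum_{l\ge 2}(v_{1,0}\circ\mathcal R_{2\pi(l-1)/(mk)})^2$, and hence of the remainder $(\Theta,\Xi)$ coming out of the contraction, is only $e^{-2\sqrt{\omega_0}(\rho/\epsilon)\sin(\pi/(mk))}\sim(\epsilon\rho)^{1/2}$, which is \emph{much larger} than the scale $\epsilon\rho$ of the reduced equation. So ``standard perturbative arguments'' do not show that the remainder contributes $o(\epsilon\rho)$ to the bifurcation equation. The delicate term is, in rescaled variables,
\[
\int_{\mathbb R^N} v_{1,0}(y)\,Y(\epsilon y)\,\Theta(\epsilon y)\,Z_\epsilon(y)\,dy
\]
arising from the coupling $\beta v u^2$: with only the $C^{0,1/2}$ control on $\Theta$ inherited from $H^2$ this is of order $\epsilon^{1/2}\rho\gg\epsilon\rho$. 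The paper resolves this by returning to the first equation and bootstrapping the first-component remainder to $W^{2,q}$ for $q>N$, obtaining a $C^{1,1-N/q}$ bound of size $\epsilon^{2N/q}$ for $q>2N$; one then integrates by parts and uses that the gradient of $\Theta$ vanishes at the origin (a consequence of the imposed evenness) to recover the missing smallness. This regularity upgrade is specific to the $\alpha\neq 0$ case and has no analogue in Theorem~\ref{mt1}, where the remainder is already of order $\rho^2$.
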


\begin{Rem}\rm
Our results state the existence of a solution whose  components $v_1,\dots,v_m$ have $k$ peaks
forming a regular $k-$ polygon  around the origin, being a critical point of the {\em shadow}  potential $\omega$ introduced in \eqref{omega}.
We observe that while the interaction between the first component $u$ and all the other components $v_i$ is  repulsive, i.e. $\beta<0$,
the interaction between the components $v_i$ and $v_j$ can be either repulsive, i.e. $\alpha<0$ or attractive, i.e. $\alpha>0$ depending
on the nature of the origin as a critical point of $\omega$.  In particular, if $0$ is a minimum point of $\omega$ then  $\Delta \omega(0)>0$ and  there exists a solution in a fully repulsive regime, i.e.  $\alpha<0$ and $\beta<0$. On the other hand, if $0$ is a maximum point of $\omega$ then $\Delta \omega(0)<0$  and there exists a solution in a mixed attractive  and repulsive regime, i.e. $\alpha>0$ and $\beta<0$. \\
We observe that  if $V$  is radially non-decresing near
the origin then  $Y$ has a  strict maximum at the origin itself and $\Delta Y(0)<0$ (e.g. \cite[Remark 1.2]{ppvv}). Therefore if $\beta<0$ and $|\beta|$ is large enough, the shadow potential $\omega$ satisfies $\Delta \omega(0)=\Delta W(0)-2\beta Y(0)\Delta Y(0)<0$.
\end{Rem}
 \begin{Rem}\rm 
When the potentials $V=\lambda_1$ and $W=\lambda_2$  are constants 
%we can scale \textcolor{red}{ $v\to \epsilon^2v(\epsilon x)$ and $u\to \epsilon u(\epsilon x)$}
%and we can rewrite the system \eqref{pm} as
%\textcolor{red}{\begin{equation}\label{plw}
%\left\{\begin{aligned}
%&-\Delta u+\lambda_1 \epsilon^{2} u=\mu_{1}u^{3}+\beta\epsilon^{-2} u\sum\limits_{i=1}^mv^{2}\ \hbox{in}\ \R^{N},
%\\
%&-\Delta v_i+\lambda_2v_i=\mu_{2}\epsilon^{-4} v_i^{3}+\beta\epsilon^{-2}  v_iu^{2}+\alpha \epsilon^{-4} v_i\sum\limits_{j=1\atop j\not=i}^mv_j^2\ \hbox{in}\  \R^{N},\ j=1,\dots,m.
%\end{aligned}\right.
%\end{equation}}
the shadow potential $\omega(y)=\lambda_2-\beta Y^2(y)$  has only one maximum point at the origin which satisfies
$\Delta \omega(0)=-2\beta Y(0)\Delta Y(0)<0.$  
In the case of only two components we find a solution which resembles the one found
 by Lin and Wei in     \cite[Theorem 1.2]{lin-wei-fisica}  for a similar system in a small repulsive regime.
 On the other hand, in the case of more components  Theorem \ref{mt} is new and   suggests that    the result  in  \cite{lin-wei-fisica}
could be extended   to   a  system   having {\em mixed interactions}.
 \end{Rem}

\begin{Rem}\rm
Inspired by our results, one could study the partial singularly perturbed system with $m=r+s$ components
\begin{equation}\label{conj-s}
\left\{
\begin{aligned}
&-\Delta u_i+V_i(x)u_i=\mu_i u_i^3+\sum\limits_{j=1\atop j\not=i}^r a_{ij}u_iu_j^2+\sum\limits_{\ell=1}^s \beta_{i\ell}u_iv_\ell^2\ \hbox{in}\ \mathbb R^N,\ i=1,\dots,r,\\
&-\epsilon^2\Delta v_\ell+W_\ell(x)v_\ell=\nu_\ell v_\ell^3+\sum\limits_{\kappa=1\atop \kappa\not=\ell}^s b_{\ell \kappa}v_\ell v_\kappa^2+\sum\limits_{i=1}^r \beta_{ \ell i} v_\ell u_i^2\ \hbox{in}\ \mathbb R^N,\ \ell=1,\dots,s.\\
\end{aligned}\right.
\end{equation}
By mimicking the results described above, it would be interesting to explore the possibility to build   a solution to \eqref{conj-s} whose first $r-$components $(u_1,\dots,u_r)$ look like the solution of the system (if it exists)
$$-\Delta u_i+V_i(x)u_i=\mu_i u_i^3+\sum\limits_{j=1\atop j\not=i}^r a_{ij}u_iu_j^2\quad \hbox{in}\ \mathbb R^N,\ i=1,\dots,r,$$
while the second $s-$components $(v_1,\dots,v_s)$ concentrate around one or more points as $\epsilon\to0.$ 
In particular, the first $r-$components of \eqref{conj-s} do not concentrate and should play the  role of  an additional potential  in the singularly perturbed system with the remaining $s-$components. It would be challenging to detect the shadow potential which rules
the concentration behaviour of the $s-$components.
\end{Rem}

\begin{Rem}\rm The proof of our results relies on a classical Lyapunov-Schmidt procedure and follows the strategy introduced and developed in \cite{ppvv,pv}.
That is why we will omit many details of the proof and we will only show what can not be immediately deduced by \cite{ppvv,pv}.\end{Rem}

{\em Notation.}\, In what follows we agree that notation $f=\mathcal O(g)$ or $f\lesssim g$ stands for
$|f|\le C|g|$ for some
$ C >0$   uniformly with respect all the variables involved,
 unless it is not specified.

\section{Proof of Theorem \ref{mt1} and Theorem \ref{mt}}

\subsection{Preliminaries}
We will find a solution $(u,v_1,\cdots,v_m)$ to $\eqref{pm}$  such that (see \eqref{roto})
$$v_{i}(y)=v \left(\widehat{\mathcal{R}_{i}}\,y\right),\ \widehat{\mathcal{R}_{i}}:=\mathcal{R}_{\frac{2\pi(i-1)}{mk}} \ \hbox{for any}\ i=1,\cdots,m,
$$
where $(u,v)\in H^2_s(\mathbb R^N)\times H^2_s(\mathbb R^N)$ solves the non-local problem
\begin{equation}
    \label{pmr}
 \left\{   \begin{aligned}
&-\Delta u+V(y)u=\mu_{1}u^{3}+\beta u \sum\limits_{i=1}^{m}v^2 \left(\widehat{\mathcal{R}_{i}}\,y\right) \ \hbox{in}\ \mathbb R^{N},
\\
&
-\epsilon^{2}\Delta v +W(y) v=\mu_{2} v^{3}+\beta vu^{2}+\alpha v \sum\limits_{i=2}^{m}v^2 \left(\widehat{\mathcal{R}_{i}}\,y\right) \ \ \hbox{in}\ \mathbb R^{N}.
\end{aligned}\right.
\end{equation}

 We consider the scaled version of problem  \eqref{pmr}
\begin{equation}\label{SP}
 \left\{   \begin{aligned}
&-\Delta u+V(y)u=\mu_{1}u^{3}+\beta u \sum\limits_{i=1}^{m}\left(v\circ  \widehat{\mathcal{R}_{i}}\right)^2\left(\frac{y}{\epsilon}\right) \ \hbox{in}\ \mathbb R^{N},
\\
&
-\Delta v +W(\epsilon y) v=\mu_{2} v^{3}+\beta vu^{2}(\epsilon y)+\alpha v \sum\limits_{i=2}^{m} \left(v\circ  \widehat{\mathcal{R}_{i}}\right)^2\left(\frac{y}{\epsilon}\right) \ \ \hbox{in}\ \mathbb R^{N}.
\end{aligned}\right.
\end{equation}
We point out that the system \eqref{p2} with only two components is a particular case of the system \eqref{pm} taking $\alpha=0.$ That is why from now on we only consider the system \eqref{pm} and we agree that the case $m=1$ corresponds to the choice $\alpha=0.$\\
 We introduce the Banach spaces
$$H_{V }^2\left(\mathbb{R}^N\right):=\left\{u\in H^2_s\left(\mathbb{R}^N\right)\ :\ u\left(y\right)=u\left(\widehat{\mathcal{R}_{l}}    y\right),\,l=2,\cdots,m, \ \displaystyle{\int_{\R^N}} V(y)\,u^{2}dy<+\infty\right\},$$
and
$$H_{W_{\epsilon}}^2\left(\mathbb{R}^N\right):=\left\{v\in H^2_s\left(\mathbb{R}^N\right):\displaystyle{\int_{\R^N}} W(\epsilon y)\,v^{2}dy<+\infty\right\},$$ equipped with the norms
$$\|u\|_{V}:=\left(\displaystyle{\int_{\R^N}}\sum_{|\alpha|=2}\left|D^\alpha u\right|^2+|\nabla u|^{2}+V(y)\,u^{2}dy\right)^{\frac{1}{2}},$$
and
$$\|v\|_{W_{\epsilon}}:=\left(\displaystyle{\int_{\R^N}}\sum_{|\alpha|=2}\left|D^\alpha v\right|^2+|\nabla v|^{2}+W(\epsilon y)\,v^{2}dy\right)^{\frac{1}{2}}.$$
We set $X:=H_{V}^{2}\left(\R^N\right)\times H^{2}_{W_{\epsilon}}\left(\R^N\right)$ equipped with the norm $\|(u,v)\|=\|u\|_{V}+\|v\|_{W_{\epsilon}}.$\\
We agree that the case  $m=1$  corresponds to $\alpha=0.$ We also point out that when $m=1$, the condition $u\left(y\right)=u\left(\widehat{\mathcal{R}_{l}}    y\right)$, for $l=2,\cdots,m$, is overloaded as soon as $u\in H^2_s\left(\mathbb{R}^N\right)$.\\

We look for  a solution $(u,v)\in X$ to \eqref{SP} as
\begin{equation}\label{solu}
   u(y)= \underbrace{Y(y)+\beta \Phi_{\epsilon}(y)}_{:=\Upsilon_\epsilon(y)}+\phi(y),\quad v(y)=\underbrace{U_{\rho,{\epsilon}}(y)+\beta \Psi_{\epsilon}(y)}_{:=\Theta_\epsilon(y)}+\psi(y),
\end{equation}
where   $k\ge 2$ is an even integer,
$$U_{\rho,{\epsilon}}(y):= \sum\limits_{j=1}^k U_{\mu_{2}, P_{\epsilon,j}}(y),\quad  U_{\mu_{2}, P_{\epsilon,j}}(y):=U_{\mu_{2}}\left(y-\frac{P_{j}}{\epsilon}\right)
   $$
with the peaks
\begin{equation}\label{pj}  P_{j}=\rho \left(\cos\frac{2(j-1)\pi}{k},~~\sin\frac{2(j-1)\pi}{k},~~0\right)\ \hbox{and}\ \rho=d\epsilon|\ln\epsilon|\ \hbox{for some}\ d>0.\end{equation}
The functions $\Phi_{\epsilon}$ and $\Psi_{\epsilon}$ are suitable correction terms whose existence and properties
are given  in Lemma \ref{Phi}. Moreover, the remainder term $\left(\phi,\psi\right)$ belongs to the space
\begin{equation*}
   \left\{(\phi,\psi)\in X\,\Big|\,\displaystyle{\int_{\R^N}}\psi(y)\,Z_\epsilon(y)\,dy=0\right\},
    \end{equation*}
where
\begin{equation}\label{z-1}
    \begin{split}
        Z_\epsilon(y)&:=\epsilon\sum\limits_{j=1}^k \frac{\partial}{\partial \rho}U_{\mu_{2},P_{\epsilon,j}}\left(y\right)\\&=-\sum\limits_{j=1}^k \left({\cos\frac{2(j-1)\pi}{k}\frac{\partial}{\partial y_1}U_{\mu_{2},P_{\epsilon,j}}\left(y\right)+\sin\frac{2(j-1)\pi}{k}\frac{\partial}{\partial y_2}U_{\mu_{2},P_{\epsilon,j}}\left(y\right)}\right)\\
        &:=\sum\limits_{j=1}^kZ_{\epsilon,j}(y)
    \end{split}
\end{equation}
is a solution of the linear equation
\begin{equation}\label{z-ep}
    -\Delta Z_\epsilon
    +\omega_0\, Z_\epsilon=3\mu_2 \left(\sum\limits_{j=1}^k U_{\mu_{2},P_{\epsilon,j}}^{2}(y)\,Z_{\epsilon,j}(y)\right)\ \hbox{in}\ \mathbb R^N.
\end{equation}
We are going to use the classical Lyapunov-Schmidt procedure: we shall adjust the unknown parameter $d=d(\epsilon)$ in \eqref{pj} to get a genuine solution to \eqref{SP}. As it is usual this choice will be made in the last step of the procedure and will be deduced by the balance  between the interactions between the bubbles and the potential (whose order is  always $\epsilon\rho$) and the interactions between the bubbles themselves which strongly depend on the minimum distance between all the peaks. In  the case of only two components,
the shortest distance among the peaks   is
\begin{equation}\label{pk}
\min\limits_{j=2,\cdots,k}\frac{\left|P_1-P_j\right|}{\epsilon}=\frac{\left|P_1-P_2\right|}{\epsilon}=\frac{2\rho}{\epsilon}\,{\sin{\frac{\pi}{k}}},
\end{equation}
while in the case of more components the minimal distance among all the peaks 
\begin{equation}\label{pij}
P_{ji}:=\widehat{\mathcal{R}_{i}}^{-1}    P_{j}=\widehat{\mathcal{R}_{-i}}    P_{j},\  j=1,\cdots,k,\ i=2,\cdots,m,\end{equation}
is
\begin{equation*} \min\limits_{j=1,\cdots,k;\,i=2,\cdots,m}\frac{\left|P_j-P_{ji}\right|}{\epsilon}=\frac{\left|P_1-P_{12}\right|}{\epsilon}=\frac{2\rho}{\epsilon}{\sin{\frac{\pi}{mk}}}.
\end{equation*}
We would like to point out that if $i=1$, $P_{j1}=P_{j}$ for $j=1,\cdots,k$.
More precisely, in Lemma \ref{important}   if  $\alpha=0$ the balance reads as
\begin{equation}\label{bala1}\epsilon\rho\sim  e^{^{-2\sqrt{\omega_0}\,\frac{\rho}{\epsilon}\sin{\frac{\pi}{ k}}}}\left(\frac{\rho}{\epsilon}\right)^{-\frac{N-1}{2}}\end{equation}
while   if $\alpha\not =0$ it reads as
\begin{equation}\label{bala2}
\begin{aligned}
&\epsilon\rho\sim  e^{^{-4\sqrt{\omega_0}\,\frac{\rho}{\epsilon}\sin{\frac{\pi}{mk}}}}\left(\frac{\rho}{\epsilon}\right)^{-\frac{1}{2}}&\ \hbox{if}\ N=2,\\
&\epsilon\rho\sim e^{^{-4\sqrt{\omega_0}\,\frac{\rho}{\epsilon}\sin{\frac{\pi}{mk}}}}\left(\frac{\rho}{\epsilon}\right)^{-{2}}\ln{\left(\frac{\rho}{\epsilon}\right)}
& \ \hbox{if}\  N=3.
\end{aligned}\end{equation}

From now on, we agree that  \eqref{bala1} and \eqref{bala2} hold true when $\alpha=0$ and $\alpha\not=0,$ respectively.

\subsection{The correction of the ansatz}
We define the correction terms $\Phi_\epsilon$ and $\Psi_\epsilon$ in \eqref{solu}  following the ideas in  \cite[Section 2]{ppvv}.
 \begin{Lem}\label{Phi}${}$\\
  \begin{enumerate}[(i)]
  \item   There exists a unique function
$\Phi_\epsilon\in H_{V}^{2}(\R^N)$ verifying
\begin{equation}\label{phi-ep}
    -\Delta \Phi_\epsilon+\left(V(y)-3\mu_{1}Y^{2}(y)\right)\Phi_\epsilon={Y(y)\left(\sum\limits_{i=1}^m U^{2}_{\rho,\epsilon}\left(\widehat{\mathcal{R}_{i}}   \frac{y}{\epsilon}\right)\right)}.
\end{equation}
such that
\begin{equation}
    \label{es-1}
\left\|\Phi_\epsilon\right\|_{C^{0,\frac{1}{2}}(\R^N)}\lesssim\left\|\Phi_\epsilon\right\|_{V}\lesssim\epsilon^{\frac{N}{2}}
\end{equation}
and for any $q>N$,
\begin{equation}
    \label{es-3}
    \left\|\Phi_\epsilon\right\|_{C^{1,1-\frac{N}{q}}(\R^N)}\lesssim\left\|\Phi_\epsilon\right\|_{W^{2,q}(\R^N)}\lesssim\epsilon^{\frac{N}{q}}.
\end{equation}
 \item There exists a unique function
$\Psi_\epsilon \in H^{2}_{W_\epsilon}(\R^N)$ verifying
\begin{equation}\label{psi-ep}
    -\Delta\Psi_\epsilon+\left(\omega_0-3\mu_{2}\sum\limits_{j=1}^k U^{2}_{\mu_{2},P_{\epsilon,j}}\left(y\right)\right)\Psi_\epsilon=2\beta \Phi_{\epsilon}(0)Y(0){\left(\sum\limits_{j=1}^k U_{\mu_{2},P_{\epsilon,j}}\left(y\right)\right)}.
\end{equation}
such that
\begin{equation}
    \label{es-4}
\left\|\Psi_\epsilon\right\|_{C^{0,\frac{1}{2}}(\R^N)}\lesssim\left\|\Psi_\epsilon\right\|_{H^{2}(\R^N)}\lesssim\epsilon^{\frac{N}{2}}.
\end{equation}
Moreover, there exist $\gamma\in (0,\sqrt{\omega_0}) $ and $R_0>0$ such that
\begin{equation}
    \label{es-5}
\left|\Psi_\epsilon(y)\right|\lesssim\epsilon^{\frac{N}{2}}\left(\sum\limits_{j=1}^k e^{-\gamma\left|y-\frac{P_{j}}{\epsilon}\right|}\right),\quad\text{for any $|y|\geq R_0$.}
\end{equation}
\end{enumerate}
\end{Lem}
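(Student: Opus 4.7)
The plan is to treat both parts as linear Fredholm problems in suitable symmetric Sobolev spaces and to estimate the right-hand sides by a simple rescaling argument, following the template developed in \cite{ppvv,pv}.

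For part (i), consider the linearized operator $L_1\phi:=-\Delta\phi+(V(y)-3\mu_1 Y^2(y))\phi$. The non-degeneracy of $Y$ in $H^1_e(\R^N)$, combined with the solvability assumption (1), implies that $L_1$ is an isomorphism from $H^2_V(\R^N)$ (and from its $W^{2,q}$ analogue, for every $2\le q<\infty$) onto the corresponding symmetric $L^q$-space. The right-hand side $R_\epsilon(y):=Y(y)\sum_{i=1}^m U^2_{\rho,\epsilon}(\widehat{\mathcal R_i}\,y/\epsilon)$ inherits all the required symmetries, so inverting $L_1$ produces the desired $\Phi_\epsilon$. For the quantitative bounds I would change variables $y=\epsilon z$: since $Y$ is bounded and the translated bubbles $U_{\mu_2}(\cdot-P_j/\epsilon)$ are uniformly controlled in every $L^p$ and eventually pairwise well-separated (because $\rho/\epsilon\to+\infty$), one obtains $\|R_\epsilon\|_{L^q(\R^N)}\lesssim \epsilon^{N/q}$. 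Taking $q=2$ yields $\|\Phi_\epsilon\|_V\lesssim\epsilon^{N/2}$, and the embedding $H^2\hookrightarrow C^{0,1/2}$ (valid for $N\le 3$) gives \eqref{es-1}; for $q>N$ the bound $\|\Phi_\epsilon\|_{W^{2,q}}\lesssim \epsilon^{N/q}$, combined with $W^{2,q}\hookrightarrow C^{1,1-N/q}$, gives \eqref{es-3}.

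For part (ii), the relevant operator is $L_{2,\epsilon}\psi:=-\Delta\psi+\bigl(\omega_0-3\mu_2\sum_{j=1}^k U^2_{\mu_2,P_{\epsilon,j}}\bigr)\psi$. Its approximate kernel is spanned by the $N$ translation modes $\partial_{y_\ell} U_{\mu_2,P_{\epsilon,j}}$, and the symmetry constraint $\psi\in H^2_s$ (with $k$ even) removes all of them except the radial mode $Z_\epsilon$ introduced in \eqref{z-1}. I would prove uniform (in $\epsilon$) invertibility of $L_{2,\epsilon}$ on the subspace of $H^2_s$ orthogonal to $Z_\epsilon$ by the standard blow-up argument: if $\psi_n\in H^2_s$ with $\|\psi_n\|_{H^2}=1$ and $\|L_{2,\epsilon_n}\psi_n\|_{L^2}\to 0$, translating around each peak would produce in the limit a bounded element of $\ker(-\Delta+\omega_0-3\mu_2 U_{\mu_2}^2)$ which, under the symmetry constraints, can only correspond to the radial translation mode; this is ruled out by orthogonality to $Z_\epsilon$. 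The right-hand side of \eqref{psi-ep} being a scalar multiple of the symmetric function $\Sigma_\epsilon:=\sum_j U_{\mu_2,P_{\epsilon,j}}$, I would check that $\Sigma_\epsilon$ is orthogonal to $Z_\epsilon$ up to an exponentially small error and argue that the Lagrange multiplier produced by the constrained inversion can be absorbed. Combined with the bound $|\Phi_\epsilon(0)|\lesssim \epsilon^{N/2}$ from part (i) and $\|\Sigma_\epsilon\|_{L^2}=O(1)$, this yields \eqref{es-4}.

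The pointwise decay \eqref{es-5} is the most delicate step and would be obtained by a barrier argument. For $|y|\ge R_0$ with $R_0$ large enough, the coefficient $\omega_0-3\mu_2\sum_j U^2_{\mu_2,P_{\epsilon,j}}$ is close to $\omega_0$, and the function $\overline\Psi(y):=C\epsilon^{N/2}\sum_{j=1}^k e^{-\gamma|y-P_j/\epsilon|}$ with any $0<\gamma<\sqrt{\omega_0}$ satisfies $L_{2,\epsilon}\overline\Psi\ge |\text{r.h.s.}|$ pointwise outside a fixed neighbourhood of the peaks; the maximum principle then yields $|\Psi_\epsilon|\le\overline\Psi$. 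The main obstacle will be the uniform invertibility of $L_{2,\epsilon}$ on the symmetric subspace together with the careful handling of the near-kernel direction $Z_\epsilon$, both of which closely parallel the analogous arguments in \cite{ppvv,pv}.
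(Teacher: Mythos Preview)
Your proposal is correct and follows essentially the same template the paper defers to: the paper's own proof of this lemma consists only of the sentence ``$(i)$ and $(ii)$ can be proved as in \cite[Lemma 2.1]{ppvv} and \cite[Lemma 2.3]{ppvv}, respectively,'' together with an explicit check that the right-hand sides of \eqref{phi-ep} and \eqref{psi-ep} lie in the appropriate symmetric spaces. Your sketch of the Fredholm/scaling argument for $(i)$, the blow-up argument plus barrier for $(ii)$, and your identification of $Z_\epsilon$ as the only surviving near-kernel direction under the $H^2_s$ symmetries are exactly the ingredients of the cited lemmas; the only point the paper adds beyond your outline is the verification of the rotational invariances $f(y)=f(\mathcal R_{2\pi/k}y)$ and $f(y)=f(\widehat{\mathcal R}_l y)$ for the source terms, which you subsume in one line.
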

\begin{proof}
 $(i)$ and $(ii)$ can be proved as in \cite[Lemma 2.1]{ppvv} and \cite[Lemma 2.3]{ppvv}, respectively. The symmetry properties of $\Phi_\epsilon$ and $\Psi_\epsilon$ can be deduced by the symmetries of
  the R.H.S. in \eqref{phi-ep} and  the R.H.S. in \eqref{psi-ep}. Indeed, the function
 $$f(y):=Y(y)\left(\sum\limits_{i=1}^m U^{2}_{\rho,\epsilon}\left(\widehat{\mathcal{R}_{i}}   \frac{y}{\epsilon}\right)\right)=Y(y)\left(\sum\limits_{i=1}^m \left(\sum\limits_{j=1}^{k} U_{\mu_{2}}\left(\frac{y-P_{ji}}{\epsilon}\right)\right)^{2}\,\right)$$
with $P_{ji}$ defined in \eqref{pij},
  is   even and satisfies
$$f(y)=f\left(\mathcal{R}_{\frac{2\pi
}{k}}    y\right)\quad\hbox{and}\quad  f(y)=f\left(\widehat{\mathcal{R}_{l}}    y\right),\,l=2,\cdots,m,$$
while the function
 $$g(y):=2\beta \Phi_{\epsilon}(0)Y(0){\left(\sum\limits_{j=1}^k U_{\mu_{2},P_{\epsilon,j}}\left(y\right)\right)}$$
  is   even and satisfies
$$g(y)=g\left(\mathcal{R}_{\frac{2\pi
}{k}}    y\right).$$
\end{proof}
\begin{Rem}\label{after}
Since $\nabla \Phi_\epsilon(0)=0$, from \eqref{es-3}, we deduce that for any $q>N$,
\begin{equation}\label{regular}
    \left|\Phi_\epsilon(y)-\Phi_\epsilon(0)\right|\lesssim\left\|\Phi_\epsilon\right\|_{C^{1,1-\frac{N}{q}}(\R^N)}|y|^{2-\frac{N}{q}}\lesssim\epsilon^{\frac{N}{q}}|y|^{2-\frac{N}{q}}.
\end{equation}

\end{Rem}

\subsection{The reduction process}

Let us introduce  the one dimensional space
$$
 {K}:= \left\{\left(0, cZ_{\varepsilon}\right)\ :\ c\in\mathbb R\right\} \subset L^2(\R^N)\times L^2(\R^N),$$
where $Z_{\varepsilon}(y)$ is defined as \eqref{z-1}, its orthogonal
$$ {K}^{\perp}  :=\left\{(f, g) \in L^2\left(\mathbb{R}^N\right) \times L^2\left(\mathbb{R}^N\right): \int_{\mathbb{R}^N} g(y) Z_{\varepsilon}(y) dy=0\right\},$$
and the projections
$$
 {\Pi}: L^2\left(\mathbb{R}^N\right) \mapsto  {K}, \quad \text { and } \quad {\Pi}^{\perp}: L^2\left(\mathbb{R}^N\right) \mapsto  {K}^{\perp}.
$$
Then taking account the ansatz in \eqref{solu}, we can rewrite \eqref{SP} as the equivalent system
\begin{equation}\label{equiv-sys1}
  {\Pi}^\perp\left\{\mathcal{L}(\phi, \psi)-\mathcal{E}-\mathcal{N}(\phi, \psi)\right\}=0,
 \end{equation}
 \begin{equation}\label{equiv-sys2} {\Pi} \left\{\mathcal{L}(\phi, \psi)-\mathcal{E}-\mathcal{N}(\phi, \psi)\right\}=0.
  \end{equation}
Here the linear operator $\mathcal{L}=\left(\mathcal{L}_1, \mathcal{L}_2\right)$ is defined by
\begin{equation}\label{linear-op}
    \begin{aligned}
& \mathcal{L}_1(\phi, \psi):=-\Delta \phi+V(y) \phi-\left(3 \mu_1 \Upsilon_{\epsilon}^2(y)+\beta \sum\limits_{i=1}^{m}\Theta_{\epsilon}^2\left(\widehat{\mathcal{R}}_{i}   \frac{y}{\epsilon}\right)\right) \phi(y)\\
&\quad\,\quad\,\quad\,\quad\,\quad-2 \beta \Upsilon_{\epsilon}(y)\left(\sum_{i=1}^{m}\Theta_{\epsilon}\left(\widehat{\mathcal{R}}_{i}   \frac{y}{\epsilon}\right)  \psi\left(\widehat{\mathcal{R}}_{i}   \frac{y}{\epsilon}\right)\right), \\
& \mathcal{L}_2(\phi, \psi):=-\Delta \psi+W(\epsilon y) \psi-\Big(3 \mu_2 \Theta_{\epsilon}^2(y)+\beta \Upsilon_{\epsilon}^2(\epsilon y)\Big) \psi(y)-2 \beta \Theta_{\epsilon}(y) \Upsilon_{\epsilon}(\epsilon y) \phi(\epsilon y)\\
&\quad\,\quad\,\quad\,\quad\,\quad-2\alpha\Theta_{\epsilon}(y)\left(\sum_{i=2}^{m}\Theta_{\epsilon}\left(\widehat{\mathcal{R}}_{i}   {y}\right)  \psi\left(\widehat{\mathcal{R}}_{i}   {y}\right)\right)-\alpha\psi(y)\left(\sum_{i=2}^{m}\Theta^{2}_{\epsilon}\left(\widehat{\mathcal{R}}_{i}   {y}\right) \right).
\end{aligned}
\end{equation}
The error term $\mathcal{E}=\left(\mathcal{E}_1, \mathcal{E}_2\right)$ is defined by

\begin{equation}\label{error}
    \begin{aligned}
\mathcal{E}_1:= & 3 \mu_1 \beta^2 \Phi_{\epsilon}^2(y)Y(y) +\mu_1 \beta^3 \Phi_{\epsilon}^3(y)+\beta^3 Y(y) \left(\sum\limits_{i=1}^{m}\Psi^{2}_{\epsilon}\left(\widehat{\mathcal{R}_{i}}   \frac{y}{\epsilon}\right)\right)\\
&+2 \beta^2 Y(y)\left(\sum\limits_{i=1}^{m} U_{\rho,{\epsilon}}\left(\widehat{\mathcal{R}_{i}}   \frac{y}{\epsilon}\right) \Psi_{\epsilon}\left(\widehat{\mathcal{R}_{i}}   \frac{y}{\epsilon}\right)\right)  \\
&+\beta^2 \Phi_{\epsilon}(y) \left(\sum\limits_{i=1}^{m}\left(U_{\rho,{\epsilon}}^2\left(\widehat{\mathcal{R}_{i}}   \frac{y}{\epsilon}\right)+2 \beta U_{\rho,{\epsilon}}\left(\widehat{\mathcal{R}_{i}}   \frac{y}{\epsilon}\right) \Psi_{\epsilon}\left(\widehat{\mathcal{R}_{i}}   \frac{y}{\epsilon}\right)+\beta^2\Psi_{\epsilon}^{2}\left(\widehat{\mathcal{R}_{i}}   \frac{y}{\epsilon}\right)\right)\right), \\
\mathcal{E}_2:= & \left(\omega_0-\omega(\epsilon y)\right) \Theta_{\epsilon}(y)+2 \beta^2 U_{
\rho,{\epsilon}}(y)\Big(Y(\epsilon y) \Phi_{\epsilon}(\epsilon y)-Y(0)\Phi_{\epsilon}(0) \Big) \\
& +\beta^3 \Phi_{\epsilon}^2(\epsilon y) \Theta_{\epsilon}(y)+2 \beta^3 \Psi_{\epsilon}(y)Y(\epsilon y) \Phi_{\epsilon}(\epsilon y)+3 \mu_2 \beta^2 U_{\rho,{\epsilon}}(y) \Psi_{\epsilon}^2(y)+\mu_2 \beta^3 \Psi_{\epsilon}^3(y) \\
&{ +\mu_2\left(U_{\rho,{\epsilon}}^3(y)-\sum_{j=1}^{k}U_{\mu_{2},P_{\epsilon,j}}^3(y)\right)}+3\mu_2 \beta \Psi_{\epsilon}(y)\left(U_{\rho,{\epsilon}}^2(y)-\sum_{j=1}^{k}U_{\mu_{2},P_{\epsilon,j}}^2(y)\right)\\
&{+\alpha\Theta_{\epsilon}(y)\left(\sum\limits_{i=2}^{m}\Theta^{2}_{\epsilon}\left(\widehat{\mathcal{R}_{i}}    y)\right)\right)}.
\end{aligned}
\end{equation}
Finally, the nonlinear term $\mathcal{N}=\left(\mathcal{N}_1, \mathcal{N}_2\right)$ is defined by
\begin{equation}
\label{perturbed}
\begin{aligned}
 \mathcal{N}_1(\phi, \psi):=&\mu_1 \phi^2(y)\Big(3 \Upsilon_{\epsilon}(y)+\phi(y)\Big)+\beta \Upsilon_{\epsilon}(y) \left(\sum\limits_{i=1}^{m}\psi^2\left(\widehat{\mathcal{R}_{i}}   \frac{y}{\epsilon}\right)\right)\\
&+\beta \phi(y) \left(\sum\limits_{i=1}^{m}\left(2 \Theta_{\epsilon}\left(\widehat{\mathcal{R}_{i}}   \frac{y}{\epsilon}\right)\psi\left(\widehat{\mathcal{R}_{i}}   \frac{y}{\epsilon}\right)+\psi^{2}\left(\widehat{\mathcal{R}_{i}}   \frac{y}{\epsilon}\right)\right )\right),\\
 \mathcal{N}_2(\phi, \psi):=&\mu_2 \psi^2(y)\Big(3 \Theta_{\epsilon}(y)+\psi(y)\Big)+\beta \psi(y) \phi(\epsilon y)\Big(2 \Upsilon_{\epsilon}(\epsilon y)+\phi(\epsilon y)\Big)+\beta \Theta_{\epsilon}(y) \phi^2(\epsilon y)\\
 &+\alpha\Theta_{\epsilon}(y)\left(\sum\limits_{i=2}^{m}\psi^{2}\left(\widehat{\mathcal{R}_{i}}    y\right)\right)+\alpha\psi(y)\sum\limits_{i=2}^{m}\left(2\Theta_{\epsilon}\left(\widehat{\mathcal{R}_{i}}    y\right)\psi\left(\widehat{\mathcal{R}_{i}}    y\right)+\psi^{2}\left(\widehat{\mathcal{R}_{i}}    y\right)\right).
\end{aligned}
\end{equation}

\subsection{Solving  equation (\ref{equiv-sys1})}
The proof is standard and relies on a classical contraction mapping argument as in \cite[Proposition 3.4]{ppvv}.

\begin{Prop}\label{contraction} There exists $\epsilon_{0}>0$ such that for any $\epsilon\in(0,\epsilon_0)$, there exists { a unique solution     $(\phi_\epsilon,\psi_\epsilon)\in X$ of \ref{equiv-sys1} such that
    $\displaystyle{\int_{\R^N}}\psi_\epsilon(y)\,Z_\epsilon(y)\,dy=0.$}
Moreover, if $\alpha=0$ $(m=1)$
\begin{equation}
    \label{al-zero}
    \|(\phi_\epsilon, \psi_\epsilon)\|\lesssim \epsilon^{2}|\ln{\epsilon}|^{2},
\end{equation}
and  if $\alpha\neq 0$ $(m\geq 2)$,
\begin{equation}
    \label{alnot-zero}\|(\phi_\epsilon, \psi_\epsilon)\|\lesssim e^{-2\sqrt{\omega_{0}}\frac{\rho}{\epsilon}\sin{\frac{\pi}{mk}}}\left(\frac{\rho}{\epsilon}\right)^{-\frac{N-1}{2}}\lesssim\left\{\begin{aligned}&\epsilon|\ln\epsilon|^\frac14\,\quad\hbox{if}\ N=2,\\
&\frac{\epsilon|\ln\epsilon|^\frac12}{\left(\ln|\ln\epsilon|\right)^{\frac12}}\,\quad\hbox{if}\ N=3.\\
\end{aligned}\right.
\end{equation}
\end{Prop}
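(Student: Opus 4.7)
The plan is to solve the projected equation (\ref{equiv-sys1}) by a standard Banach fixed-point argument in the subspace
$$X^\perp := \Big\{(\phi,\psi) \in X : \int_{\R^N} \psi(y)\,Z_\epsilon(y)\, dy = 0\Big\}.$$
Rewriting (\ref{equiv-sys1}) as $(\phi,\psi) = (\Pi^\perp \mathcal{L})^{-1} \Pi^\perp\{\mathcal{E} + \mathcal{N}(\phi,\psi)\}$, we need three ingredients: (i) uniform invertibility of $\Pi^\perp \mathcal{L}$ on $X^\perp$ with bounded inverse; (ii) a sharp estimate of the error $\|\Pi^\perp \mathcal{E}\|$ in the target norm; and (iii) a quadratic Lipschitz estimate for $\mathcal{N}$. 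The bounds (\ref{al-zero}) and (\ref{alnot-zero}) will then follow from the size of $\mathcal{E}$ together with the balances (\ref{bala1})--(\ref{bala2}).

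The linear theory is the core step. The operator $\mathcal{L}_1$ in (\ref{linear-op}) is a small perturbation, in the appropriate operator norm, of $-\Delta + V - 3\mu_1 Y^2$ acting on the symmetric space $H_V^2(\R^N)$, because by (\ref{es-1})--(\ref{es-3}) and Remark \ref{after} the correction $\beta\Phi_\epsilon$ is small in $C^0$, and the bubble sum evaluated at $y/\epsilon$ contributes only locally around the origin. Non-degeneracy of $Y$ in $H_e^1(\R^N)$ (assumption (2)) makes this limit operator an isomorphism. Similarly, $\mathcal{L}_2$ localized near each peak $P_j/\epsilon$ is close to the non-degenerate linearization $-\Delta + \omega_0 - 3\mu_2 U_{\mu_2}^2$, whose kernel is spanned by $\partial_{y_1}U_{\mu_2},\dots,\partial_{y_N}U_{\mu_2}$. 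Because we work in $H_s^2(\R^N)$ and impose invariance under $\widehat{\mathcal{R}_i}$, all translational directions not tangent to the radius $\rho$ are automatically annihilated; the single remaining direction $Z_\epsilon$ defined in (\ref{z-1}) is precisely what $\Pi^\perp$ kills. A contradiction/blow-up argument, carried out exactly as in \cite[Section 3]{ppvv} but now tracking the extra rotations $\widehat{\mathcal{R}_i}$, yields a bound $\|(\Pi^\perp\mathcal{L})^{-1}\| \lesssim 1$ uniformly in $\epsilon$.

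The error estimate is where the two regimes split. Each term of $\mathcal{E}_1$ in (\ref{error}) is controlled by (\ref{es-1})--(\ref{es-3}), yielding a contribution of order $\epsilon^N$ or smaller. In $\mathcal{E}_2$, the dominant pieces are: the potential mismatch $(\omega_0 - \omega(\epsilon y))\,\Theta_\epsilon(y)$, which, using $\nabla\omega(0)=0$ (radial symmetry) and a Taylor expansion, produces a contribution of order $\epsilon\rho$; the bubble self-interaction $U_{\rho,\epsilon}^3 - \sum_j U_{\mu_2,P_{\epsilon,j}}^3$, whose size is controlled by the two nearest neighbours on the polygon and yields a term of order $e^{-2\sqrt{\omega_0}\,(\rho/\epsilon)\sin(\pi/k)}(\rho/\epsilon)^{-(N-1)/2}$; and, when $\alpha\neq 0$, the new cross-interaction $\alpha\,\Theta_\epsilon(y)\sum_{i\ge 2}\Theta_\epsilon^2(\widehat{\mathcal{R}_i}y)$, whose nearest-peak separation is now $2\rho\sin(\pi/(mk))/\epsilon < 2\rho\sin(\pi/k)/\epsilon$, producing the sharper exponential appearing in (\ref{bala2}). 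Summing these contributions and inserting the balances (\ref{bala1}) or (\ref{bala2}) gives precisely the right-hand sides in (\ref{al-zero}) and (\ref{alnot-zero}), where the extra logarithmic factors are the $(\rho/\epsilon)^{-(N-1)/2}\sim |\ln\epsilon|^{-(N-1)/2}$ polynomial corrections from the exponentials in the balance.

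The nonlinear term $\mathcal{N}$ in (\ref{perturbed}) is a polynomial of degree $\ge 2$ in $(\phi,\psi)$ with coefficients bounded in $L^\infty$ uniformly in $\epsilon$, so $\|\mathcal{N}(\phi,\psi)\|_{L^2\times L^2} \lesssim \|(\phi,\psi)\|^2$ and the Lipschitz constant of $\mathcal{N}$ on a ball of radius comparable to $\|\mathcal{E}\|$ is $o(1)$. The contraction mapping theorem then yields a unique fixed point in $X^\perp$ of size $\|\mathcal{E}\|$, giving (\ref{al-zero})--(\ref{alnot-zero}). The main obstacle is the error analysis for $\mathcal{E}_2$ in the case $\alpha\neq 0$: the $m$ rotated copies of the $k$-polygon produce $mk$ interacting peaks, and one must carefully identify the correct minimal distance $2\rho\sin(\pi/(mk))/\epsilon$ and control the sums $\sum_{j,i}e^{-\gamma|P_j-P_{ji}|/\epsilon}$ against the exponential decay of $U_{\mu_2}$ to extract the sharp rate that drives (\ref{bala2}) and, via the dimension-dependent logarithms, the distinct bounds for $N=2$ and $N=3$ in (\ref{alnot-zero}).
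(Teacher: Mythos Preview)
Your overall strategy is exactly the one the paper uses: a contraction mapping on $X^\perp$, resting on the uniform invertibility of $\Pi^\perp\mathcal L$ (the paper's Lemma~\ref{coer}), the size of $\mathcal E$ (the paper's Lemma~\ref{error size}), and a quadratic bound on $\mathcal N$. So the architecture is correct and matches the paper.

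There is, however, one concrete slip in your error analysis that would throw off the $\alpha=0$ bound. You write that the potential mismatch $(\omega_0-\omega(\epsilon y))\Theta_\epsilon(y)$ ``produces a contribution of order $\epsilon\rho$''. That is the size of its \emph{projection} onto $Z_\epsilon$ in the reduced equation (Lemma~\ref{important}), not its $L^2$ norm. In $L^2$ one has, using $|\omega_0-\omega(\epsilon y)|\lesssim |\epsilon y|^2$ and translating to a single peak,
\[
\Big\|(\omega_0-\omega(\epsilon y))\Theta_\epsilon\Big\|_{L^2}\ \lesssim\ \epsilon^2\Big(\int_{\R^N}|y|^4\,U_{\mu_2}^2\Big(y-\tfrac{P_1}{\epsilon}\Big)\,dy\Big)^{1/2}\ \lesssim\ \epsilon^2\Big(\tfrac{\rho}{\epsilon}\Big)^2=\rho^2,
\]
i.e.\ order $\epsilon^2|\ln\epsilon|^2$. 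This is the \emph{dominant} piece of $\|\mathcal E_2\|_{L^2}$ when $\alpha=0$ and is precisely what produces \eqref{al-zero}; the bubble self-interaction $U_{\rho,\epsilon}^3-\sum_jU_{\mu_2,P_{\epsilon,j}}^3$ has $L^2$ size $e^{-2\sqrt{\omega_0}(\rho/\epsilon)\sin(\pi/k)}(\rho/\epsilon)^{-(N-1)/2}\sim\epsilon\rho=o(\rho^2)$ by \eqref{bala1}, so it is lower order here, contrary to what your summary suggests. Your hierarchy in the $\alpha\neq0$ case is correct: there the cross term $\alpha\,\Theta_\epsilon\sum_{i\ge2}\Theta_\epsilon^2(\widehat{\mathcal R}_i\,\cdot)$, governed by the shorter distance $2(\rho/\epsilon)\sin(\pi/(mk))$, dominates $\rho^2$ and yields \eqref{alnot-zero}. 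With this single correction your sketch aligns with the paper's proof.
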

Two main ingredients in the proof of Proposition \ref{contraction} are the invertibility of the linear operator $\mathcal{L}$ and the  size of the error $\mathcal{E}$.\\
 The first ingredient concerns the linear theory   and can be proved as in  \cite[Lemma 3.1]{ppvv}
\begin{Lem}\label{coer}
   There exists a constant $c_0>0$ and $\epsilon_{0}>0$ such that for any $\epsilon\in(0,\epsilon_0)$, we have
    $$\left\| {\Pi}^{\perp}\mathcal{L}(\phi, \psi)\right\|_{L^2(\R^N)\times L^2(\R^N)}\geq c_0\left\|(\phi, \psi)\right\|,$$ for any
    $(\phi,\psi)\in X$ such that
    $\displaystyle{\int_{\R^3}}\psi(y)\,Z_\epsilon(y)\,dy=0.$
    \end{Lem}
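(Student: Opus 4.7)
The plan is to argue by contradiction, adapting the classical blow-up analysis of \cite[Lemma 3.1]{ppvv} to the $k$-peak, $m$-component symmetric configuration. Suppose the estimate fails; then there exist sequences $\epsilon_n\to 0^+$ and $(\phi_n,\psi_n)\in X$ with $\|(\phi_n,\psi_n)\|=1$, $\int\psi_n Z_{\epsilon_n}\,dy=0$, and $\|\Pi^\perp\mathcal L(\phi_n,\psi_n)\|_{L^2\times L^2}=o(1)$. The task is to derive $\|(\phi_n,\psi_n)\|=o(1)$, contradicting the normalization.

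For the first (slow) component: by Lemma \ref{Phi}, $\Upsilon_{\epsilon_n}\to Y$ uniformly, while the coupling term $\sum_i\Theta_{\epsilon_n}^2(\widehat{\mathcal{R}_{i}}\cdot/\epsilon_n)$ is supported on a set of measure $O(\epsilon_n^N)$ shrinking to the origin, so it vanishes in $L^2_{\mathrm{loc}}$ against bounded test functions. Passing to the limit in $\mathcal L_1(\phi_n,\psi_n)$, the weak $H^2_V$-limit $\phi_\infty$ belongs to the kernel of $-\Delta+V-3\mu_1 Y^2$ and is even in each variable, so by the non-degeneracy hypothesis (2) one has $\phi_\infty\equiv 0$. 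Rellich compactness together with the exponential decay of $Y$ then yields $\|\phi_n\|_V\to 0$.

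For the second (fast) component, localize at each peak by setting $\widetilde\psi_{n,j}(y):=\psi_n(y+P_j/\epsilon_n)$. Since $W(\epsilon_n y+P_j)\to\omega_0$, $\Upsilon_{\epsilon_n}^2(\epsilon_n y+P_j)\to Y^2(0)$, and all other bubbles (those at $P_i$ with $i\ne j$, as well as at $\widehat{\mathcal{R}_{i}}^{-1}P_j$ for $i\ge 2$) sit at distance $\gtrsim\rho/\epsilon_n\to\infty$ in the rescaled variable, the weak $H^2_{\mathrm{loc}}$-limit $w_j$ of $\widetilde\psi_{n,j}$ satisfies $-\Delta w_j+\omega_0 w_j-3\mu_2 U_{\mu_2}^2 w_j=0$. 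Non-degeneracy of $U_{\mu_2}$ gives $w_j\in\mathrm{span}\{\partial_{y_i}U_{\mu_2}:i=1,\ldots,N\}$. The evenness of $\psi_n$ in the coordinates $y_2,\ldots,y_N$ is preserved under the translation by $P_1=(\rho/\epsilon_n,0,\ldots,0)$, which eliminates the directions $\partial_{y_2}U_{\mu_2},\ldots,\partial_{y_N}U_{\mu_2}$, yielding $w_1=a\,\partial_{y_1}U_{\mu_2}$ for some $a\in\mathbb R$. The $\mathcal R_{2\pi/k}$-invariance combined with $P_{j+1}=\mathcal R_{2\pi/k}P_j$ gives $\widetilde\psi_{n,j+1}(y)=\widetilde\psi_{n,j}(\mathcal R_{-2\pi/k}y)$, so the \emph{same} constant $a$ propagates to all peaks, with $w_j$ being $a$ times the directional derivative of $U_{\mu_2}$ along $\mathcal R_{2\pi(j-1)/k}e_1$. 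The scalar orthogonality condition, together with the exponentially small overlap between distinct $Z_{\epsilon_n,i}$'s, then forces
$$0=\int_{\mathbb R^N}\psi_n Z_{\epsilon_n}\,dy=\sum_{j=1}^k\int_{\mathbb R^N}\widetilde\psi_{n,j}(y)\,Z_{\epsilon_n,j}\bigl(y+P_j/\epsilon_n\bigr)\,dy+o(1)\;\longrightarrow\; -a\,k\int_{\mathbb R^N}|\partial_{y_1}U_{\mu_2}|^2\,dy,$$
whence $a=0$ and $w_j\equiv 0$ for every $j$.

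Finally, upgrade local weak to global strong convergence. Away from $\bigcup_j B(P_j/\epsilon_n,R)$, both $\Theta_{\epsilon_n}^2$ and $\Upsilon_{\epsilon_n}^2$ decay exponentially while $W(\epsilon_n y)\ge W_0>0$, so testing the $\mathcal L_2$-equation against $\psi_n$ on the exterior region, and combining with the already-established $\|\phi_n\|_V\to 0$ and $\widetilde\psi_{n,j}\to 0$ strongly in $H^2(B_R)$, yields a coercivity inequality with $o(1)$ right-hand side; hence $\|\psi_n\|_{W_{\epsilon_n}}\to 0$. The main obstacle, and the only step that genuinely differs from \cite[Lemma 3.1]{ppvv}, is the symmetry reduction in the previous paragraph: verifying that the evenness constraints collapse the $N$-dimensional kernel at each peak to a single scalar, that the discrete $\mathcal R_{2\pi/k}$-invariance propagates this scalar identically across all $k$ peaks, and that the single scalar orthogonality against $Z_{\epsilon_n}$ therefore suffices to kill all $k$ remaining directions simultaneously. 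The additional $\alpha$-coupling terms when $m\ge 2$ introduce only exponentially small corrections in both the local and exterior estimates, since the bubbles $\Theta_{\epsilon_n}(\widehat{\mathcal{R}_{i}}\,\cdot)$ for $i\ge 2$ are centered at peaks disjoint from those of $\Theta_{\epsilon_n}$.
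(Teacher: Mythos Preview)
Your proposal is correct and follows the same route the paper indicates: the paper does not give a proof but simply states that the lemma ``can be proved as in \cite[Lemma~3.1]{ppvv}'', and you have carried out exactly that adaptation, correctly isolating the one genuinely new ingredient (the symmetry reduction that collapses the $N$-dimensional kernel at each peak to a single scalar, propagates it via the $\mathcal R_{2\pi/k}$-invariance, and then kills it with the single orthogonality constraint). One minor imprecision: in the exterior estimate you say ``$\Upsilon_{\epsilon_n}^2$ decay exponentially'', but $\Upsilon_{\epsilon_n}^2(\epsilon_n y)\approx Y^2(\epsilon_n y)$ is \emph{not} small near the peaks; the correct point is that $W(\epsilon_n y)-\beta\,\Upsilon_{\epsilon_n}^2(\epsilon_n y)\to\omega(\epsilon_n y)\ge c>0$, which is what furnishes coercivity once $\Theta_{\epsilon_n}^2$ is small.
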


The size of the error is different from the one found in \cite[Proposition 3.3]{ppvv}.  It is clear that it is due to the shortest distance of the  points in the configuration, {see \eqref{pk} if $\alpha=0$ $(m=1)$ and $\eqref{pij}$ if $\alpha\neq 0$ $(m\geq 2)$, respectively}.
This fact affects the rate of the error as it is stated in the following result.

\begin{Lem}
    \label{error size}There exists $\epsilon_{0}>0$ such that for any $\epsilon\in(0,\epsilon_0)$
$$\|\mathcal{E}_{1}\|_{L^2(\R^N)} \lesssim \epsilon^{{N}},
$$
and
$$\|\mathcal{E}_{2}\|_{L^2(\R^N)} \lesssim
   \epsilon^{2}|\ln{\epsilon}|^{2}\ \hbox{if $\alpha=0$,}\quad  \|\mathcal{E}_{2}\|_{L^2(\R^N)} \lesssim  e^{-2\sqrt{\omega_{0}}\frac{\rho}{\epsilon}\sin{\frac{\pi}{mk}}}\left(\frac{\rho}{\epsilon}\right)^{-\frac{N-1}{2}}\ \hbox{if $\alpha\not=0$.}
$$
\\
Then  if $\alpha =0$ $(m=1)$
 \begin{equation}
      \label{he-1}
\|\mathcal{E}\|_{L^2(\R^N)\times L^2(\R^N)}\lesssim \epsilon^{2}|\ln\epsilon|^2,\end{equation}
and if $\alpha\not=0$ $(m\geq 2)$,
  \begin{equation}
      \label{he-2}
\|\mathcal{E}\|_{L^2(\R^N)\times L^2(\R^N)}\lesssim\left\{\begin{aligned}& \epsilon|\ln\epsilon|^\frac14 ,\quad\hbox{if}\ N=2,\\
& \frac{\epsilon|\ln\epsilon|^\frac12}{(\ln|\ln\epsilon|)^\frac12}, \quad \hbox{if}\ N=3.\\
\end{aligned}\right.\end{equation}
\end{Lem}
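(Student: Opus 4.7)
My plan is to bound each term of $\mathcal{E}_1$ and $\mathcal{E}_2$ separately, combining the estimates from Lemma \ref{Phi} with standard bubble-interaction estimates and a Taylor expansion of the shadow potential $\omega$ at the origin. The overall strategy follows \cite[Proposition 3.3]{ppvv}; the genuinely new ingredient is isolating the cross-rotation interaction term when $\alpha\neq 0$, which brings in the shorter distance $2\rho\sin(\pi/(mk))/\epsilon$ from \eqref{pij}.

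For $\mathcal{E}_1$, since $Y\in L^\infty$ and $\|\Phi_\epsilon\|_V\lesssim\epsilon^{N/2}$, Sobolev embedding yields $\|\Phi_\epsilon^2\|_{L^2}\lesssim\epsilon^N$ and $\|\Phi_\epsilon^3\|_{L^2}\lesssim\epsilon^{3N/2}$. All the remaining terms carry the scaled bubbles $U_{\rho,\epsilon}(y/\epsilon)$ or $\Psi_\epsilon(y/\epsilon)$; after the change of variables $z=y/\epsilon$ each rescaling gains a factor $\epsilon^{N/2}$, and the resulting $L^p$ integrals are uniformly bounded thanks to the exponential decay of $U_{\mu_2}$ and the bound \eqref{es-5} for $\Psi_\epsilon$. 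Coupling this with the $\epsilon^{N/2}$ pointwise bound \eqref{es-1} for $\Phi_\epsilon$ controls every summand by $\epsilon^N$.

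For $\mathcal{E}_2$, the leading contribution is the shadow-potential term $(\omega_0-\omega(\epsilon y))\Theta_\epsilon(y)$. Since $\omega$ is radial and $C^3$, Taylor expansion at $0$ gives $\omega(z)-\omega_0=\tfrac{1}{2N}\Delta\omega(0)|z|^2+O(|z|^3)$, so on each ball $B_{R_0}(P_j/\epsilon)$ where $|\epsilon y|\sim\rho$ we have $|\omega_0-\omega(\epsilon y)|\lesssim\rho^2=\epsilon^2|\ln\epsilon|^2$; together with $\|U_{\mu_2}\|_{L^2}\lesssim 1$ and the exponential decay of $\Theta_\epsilon$ outside these balls, this yields an $O(\epsilon^2|\ln\epsilon|^2)$ contribution. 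The term $2\beta^2U_{\rho,\epsilon}(y)\bigl(Y(\epsilon y)\Phi_\epsilon(\epsilon y)-Y(0)\Phi_\epsilon(0)\bigr)$ and the remaining $\Phi_\epsilon,\Psi_\epsilon$ correction terms are handled by combining \eqref{regular}, the Lipschitz regularity of $Y$, and $\|\Psi_\epsilon\|_{H^2}\lesssim\epsilon^{N/2}$; evaluated on the bubble supports $(|\epsilon y|\sim\rho)$ they all contribute strictly lower orders than $\epsilon^2|\ln\epsilon|^2$. Next, the bubble self-interaction $\mu_2\bigl(U_{\rho,\epsilon}^3-\sum_j U_{\mu_2,P_{\epsilon,j}}^3\bigr)+3\mu_2\beta\Psi_\epsilon\bigl(U_{\rho,\epsilon}^2-\sum_j U_{\mu_2,P_{\epsilon,j}}^2\bigr)$ expands into products $U_i^2U_j$ and $U_iU_jU_\ell$ with pairwise peak distances bounded below by $2\rho\sin(\pi/k)/\epsilon$ as in \eqref{pk}; the standard estimate $\|U_i^2U_j\|_{L^2}\lesssim(\rho/\epsilon)^{-(N-1)/2}e^{-\sqrt{\omega_0}|P_i-P_j|/\epsilon}$ produces a bound of $e^{-2\sqrt{\omega_0}(\rho/\epsilon)\sin(\pi/k)}(\rho/\epsilon)^{-(N-1)/2}$, strictly smaller than $\epsilon^2|\ln\epsilon|^2$ by the balance \eqref{bala1}. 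This proves \eqref{he-1}.

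When $\alpha\neq 0$, the new cross-rotation term $\alpha\Theta_\epsilon(y)\sum_{i=2}^m\Theta_\epsilon^2(\widehat{\mathcal{R}_i}\,y)$ produces interactions between peaks $P_j/\epsilon$ and $P_{ji}/\epsilon$ at the strictly shorter distance $2\rho\sin(\pi/(mk))/\epsilon$. The same bubble-interaction estimate now gives an $L^2$ bound of $e^{-2\sqrt{\omega_0}(\rho/\epsilon)\sin(\pi/(mk))}(\rho/\epsilon)^{-(N-1)/2}$, which dominates the $(\omega-\omega_0)\Theta_\epsilon$ contribution because $\sin(\pi/(mk))<\sin(\pi/k)$ and $d$ is chosen small enough so that \eqref{bala2} provides the balance; the simplified forms in \eqref{he-2} follow by substituting $\rho=d\epsilon|\ln\epsilon|$ and tracking the polynomial factors in dimension $N=2,3$. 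The main obstacle is the careful bookkeeping, especially: (i) distinguishing the two distance scales $\sin(\pi/k)$ versus $\sin(\pi/(mk))$ and tracking the smaller one in the exponent, and (ii) expanding every product $(U_{\rho,\epsilon}+\beta\Psi_\epsilon)^a$ without losing the exponential gain when $\Psi_\epsilon$ is bounded using the localised decay \eqref{es-5} rather than the uniform bound \eqref{es-4}.
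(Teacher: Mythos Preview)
Your proposal is correct and follows essentially the same approach as the paper's proof: a term-by-term estimate using Lemma~\ref{Phi}, the Taylor expansion of $\omega$ at the origin to get the $\rho^2$ contribution, the standard bubble-interaction bound for the cubic differences, and---when $\alpha\neq 0$---the identification of the cross-rotation term $\Theta_\epsilon(y)\sum_{i=2}^{m}\Theta_\epsilon^2(\widehat{\mathcal{R}_i}\,y)$ as the new dominant piece governed by the shorter distance $2\rho\sin(\pi/(mk))/\epsilon$. The paper organises the same computation by labelling the ten summands of $\mathcal{E}_2$ as $M_1,\dots,M_{10}$ and treating each explicitly (in particular, your remark~(ii) about using the localised decay \eqref{es-5} for $\Psi_\epsilon$ is precisely what is done for the mixed piece $M_{63}$), but the substance is the same.
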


\begin{proof}
The proof follows as in the proof of  \cite[Proposition 3.3]{ppvv}. However, for sake of completeness, we repeat the arguments here.

   By \eqref{error} and   Lemma \ref{Phi},
\begin{align*}
\|\mathcal{E}_{1}\|_{L^2(\R^N)}\lesssim&\left\|\Phi_{\epsilon}^2(y)Y(y)\right\|_{L^2(\R^N)} +\left\|\Phi_{\epsilon}^3(y)\right\|_{L^2(\R^N)} +\left\|Y(y) \left(\sum\limits_{i=1}^{m}\Psi^{2}_{\epsilon}\left(\widehat{\mathcal{R}_{i}}   \frac{y}{\epsilon}\right)\right)\right\|_{L^2(\R^N)}\\
&+\left\|Y(y)\left(\sum\limits_{i=1}^{m} U_{\rho,{\epsilon}}\left(\widehat{\mathcal{R}_{i}}   \frac{y}{\epsilon}\right) \Psi_{\epsilon}\left(\widehat{\mathcal{R}_{i}}   \frac{y}{\epsilon}\right)\right) \right\|_{L^2(\R^N)}+\left\|\Phi_{\epsilon}\sum\limits_{i=1}^{m}\Psi_{\epsilon}^{2}\left(\widehat{\mathcal{R}_{i}}   \frac{y}{\epsilon}\right)\right\|_{L^2(\R^N)}\\
&+\left\|\Phi_{\epsilon} \left(\sum\limits_{i=1}^{m}U_{\rho,{\epsilon}}^2\left(\widehat{\mathcal{R}_{i}}   \frac{y}{\epsilon}\right)\right)\right\|_{L^2(\R^N)} +\left\|\Phi_{\epsilon} \sum\limits_{i=1}^{m}U_{\rho,{\epsilon}}\left(\widehat{\mathcal{R}_{i}}   \frac{y}{\epsilon}\right) \Psi_{\epsilon}\left(\widehat{\mathcal{R}_{i}}   \frac{y}{\epsilon}\right)\right\|_{L^2(\R^N)}\\
=&O\left(\epsilon^{{N}}\right).
\end{align*}
Moreover, 
\begin{align}
&\|\mathcal{E}_{2}\|_{L^2(\R^N)}\nonumber\\\lesssim& \underbrace{\left\|\left(\omega_0-\omega(\epsilon y)\right) U_{
\rho,{\epsilon}}(y)\right\|_{L^2(\R^N)}}_{:=M_1}+\underbrace{\left\|\left(\omega_0-\omega(\epsilon y)\right) \Psi_{{\epsilon}}(y)\right\|_{L^2(\R^N)}}_{:=M_2}\nonumber\\
&+\underbrace{\left\|U_{
\rho,{\epsilon}}(y)\Big(Y(\epsilon y) \Phi_{\epsilon}(\epsilon y)-Y(0)\Phi_{\epsilon}(0) \Big)\right\|_{L^2(\R^N)}}_{:=M_3} +\underbrace{\left\|\left(U^3_{\rho,{\epsilon}}(y)-\sum_{j=1}^{k}U_{\mu_{2},P_{\epsilon,j}}^3(y)\right)\right\|_{L^2(\R^N)}}_{:=M_4}\nonumber\\& +\underbrace{\left\|\Psi_{\epsilon}(y)\left(U_{\rho,{\epsilon}}^2(y)-\sum_{j=1}^{k}U_{\mu_{2},P_{\epsilon,j}}^2(y)\right)\right\|_{L^2(\R^N)}}_{:=M_5}+\underbrace{\left\|\Theta_{\epsilon}(y)\left(\sum\limits_{i=2}^{m}\Theta^{2}_{\epsilon}\left(\widehat{\mathcal{R}_{i}}    y\right)\right)\right\|_{L^2(\R^N)}}_{:=M_6}\nonumber\\
& +\underbrace{\left\|\Phi_{\epsilon}^2(\epsilon y) \Theta_{\epsilon}(y)\right\|_{L^2(\R^N)}}_{:=M_7}+\underbrace{\left\|\Psi_{\epsilon}(y)Y(\epsilon y) \Phi_{\epsilon}(\epsilon y)\right\|_{L^2(\R^N)}}_{:=M_8} \nonumber\\&+\underbrace{\left\| U_{\rho,{\epsilon}}(y) \Psi_{\epsilon}^2(y)\right\|_{L^2(\R^N)}}_{:=M_9}+\underbrace{\left\|\Psi_{\epsilon}^3(y)\right\|_{L^2(\R^N)}}_{:=M_{10}}.\label{e2}
\end{align}
Let us estimate each term in the right side of \eqref{e2}.
Note that $y=0$ is the critical point of the function $\omega$ given in \eqref{omega}, then 
$$\left|\omega_0-\omega(\epsilon y)\right|\lesssim \left|\epsilon y\right|^{2},$$
and
\begin{align*}
    M_{1}&\lesssim \left(\displaystyle{\int_{\R^{N}}}\epsilon^{4}|y|^{4}\left(\sum_{i=1}^{k} U^{2}_{\mu_{2}}\left(y-\frac{P_{{i}}}{\epsilon}\right)\right)dy\right)^{\frac{1}{2}}\nonumber\\
    &\lesssim~\epsilon^{2}\left(\displaystyle{\int_{\R^{N}}}|y|^{4}\,U^{2}_{\mu_{2}}\left(y-\frac{P_{1}}{\epsilon}\right)dy\right)^{\frac{1}{2}}\lesssim\epsilon^{2}\left(\frac{\rho}{\epsilon}\right)^{2}=\rho^{2}.
\end{align*}
By \eqref{es-1} in Lemma \ref{Phi},
\begin{align*}
    M_2&\lesssim \left(\displaystyle{\int_{\R^{N}}}\epsilon^{4}|y|^{4}\left(\Phi_{\epsilon}^{2}(0)\sum_{i=1}^{k} \Psi^{2}\left(y-\frac{P_{{i}}}{\epsilon}\right)\right)dy\right)^{\frac{1}{2}}\nonumber\\
    &\lesssim \epsilon^{2}\left\|\Phi_{\epsilon}\right\|_{L^{\infty}(\R^N)}\left(\displaystyle{\int_{\R^{N}}}|y|^{4}\Psi^{2}\left(y-\frac{P_{{1}}}{\epsilon}\right)dy\right)^{\frac{1}{2}}\nonumber\\
    &\lesssim \epsilon^{2+\frac{N}{2}}\left(\frac{\rho}{\epsilon}\right)^{2}=\epsilon^{\frac{N}{2}}\rho^{2}=o\left(\rho^{2}\right).
\end{align*}
Now,
\begin{align*}
  M_{3}\leq &\underbrace{\left\|U_{
\rho,{\epsilon}}(y)Y(\epsilon y)\Big( \Phi_{\epsilon}(\epsilon y)-\Phi_{\epsilon}(0) \Big)\right\|_{L^2(\R^N)}}_{:=M_{31}}
+\underbrace{\left\|U_{
\rho,{\epsilon}}(y)\Big(Y(\epsilon y) -Y(0)\Big)\Phi_{\epsilon}(0) \right\|_{L^2(\R^N)}}_{:=M_{32}}=o(\rho^2),
\end{align*}
since by \eqref{regular}
\begin{align*}
    M_{31}&\lesssim\left(\displaystyle{\int_{\R^{N}}}\epsilon^{4}|y|^{2\left(2-\frac{N}{q}\right)}Y^{2}(\epsilon y)\left(\sum_{i=1}^{k} U_{\mu_{2}}^{2}\left(y-\frac{P_{{i}}}{\epsilon}\right)\right)dy\right)^{\frac{1}{2}}\\
    &\lesssim\epsilon^{2}\left\|Y\right\|_{L^{\infty}(\R^N)}\left(\displaystyle{\int_{\R^{N}}}|y|^{2\left(2-\frac{N}{q}\right)}U_{\mu_2}^{2}\left(y-\frac{P_{{1}}}{\epsilon}\right)dy\right)^{\frac{1}{2}}\\
    &\lesssim\epsilon^{2}\left(\frac{\rho}{\epsilon}\right)^{2-\frac{N}{q}}=\epsilon^{\frac{N}{q}}\rho^{2-\frac{N}{q}}=o\left(\rho^{2}\right).
\end{align*}
and similarly, by
$$\left|Y(\epsilon y) -Y(0)\right|\lesssim \epsilon^{2}|y|^{2},$$
\begin{align*}
    M_{32}&\lesssim\epsilon^{2}\left\|\Phi_\epsilon\right\|_{L^{\infty}(\R^N)}\left(\displaystyle{\int_{\R^{N}}}|y|^{4}U^{2}\left(y-\frac{P_{{1}}}{\epsilon}\right)dy\right)^{\frac{1}{2}}\\
    &\lesssim\epsilon^{2+\frac{N}{2}}\left(\frac{\rho}{\epsilon}\right)^{2}=o\left(\rho^{2}\right).
\end{align*}

{Next}, by \eqref{U-1} and \eqref{bala1}, we obtain that if $\alpha=0$,
\begin{align*}
M_4&\lesssim\left(\displaystyle{\int_{\R^N}} \left|\sum\limits_{i,j=1}^k\sum_{i\neq j}U^{2}_{\mu_2,P_{\epsilon,j}}(y)U_{\mu_2,P_{\epsilon,i}}(y)\right|^{2}dy\right)^{\frac{1}{2}}
\nonumber\\&\lesssim \left(\sum\limits_{i=2}^k\displaystyle{\int_{\R^N}} U_{\mu_{2}}^{4}\left(x\right)U_{\mu_{2}}^{2}\left(x-\frac{P_{i}-P_{1}}{\epsilon}\right)dx\right)^{\frac{1}{2}}\nonumber\\
&\lesssim e^{-\sqrt{\omega_{0}}\frac{|P_{2}-P_{1}|}{\epsilon}}\left(\frac{|P_{2}-P_{1}|}{\epsilon}\right)^{-\frac{N-1}{2}}\\
&\lesssim e^{-2\sqrt{\omega_{0}}\frac{\rho}{\epsilon}\sin{\frac{\pi}{k}}}\left(\frac{\rho}{\epsilon}\right)^{-\frac{N-1}{2}}=o\left(\rho^{2}\right).
\end{align*}
On the other hand, {by \eqref{bala2}, we deduce that if $\alpha\neq0$ and $\sin{\frac{\pi}{k}}>2\sin{\frac{\pi}{mk}}$},
\begin{align*}
  M_4& \lesssim e^{-2\sqrt{\omega_{0}}\frac{\rho}{\epsilon}\sin{\frac{\pi}{k}}}\left(\frac{\rho}{\epsilon}\right)^{-\frac{N-1}{2}}=o\left(\rho^{2}\right).
\end{align*}

Similarly, combining \eqref{bala1} and \eqref{bala2} with Lemma \ref{lma2.2} respectively, we derive that {both in the cases of $\alpha= 0$ and $\alpha\neq 0$},
\begin{align*}
    M_5&\lesssim\left\|\Psi_\epsilon
    \right\|_{L^{\infty}(\R^N)}\left(\sum\limits_{i=2}^k\displaystyle{\int_{\R^N}} U_{\mu_{2}}^{2}\left(x\right)U_{\mu_{2}}^{2}\left(x-\frac{P_{i}-P_{1}}{\epsilon}\right)dx\right)^{\frac{1}{2}}\nonumber\\
    &\lesssim
    \begin{cases}
        \epsilon\,e^{-2\sqrt{\omega_{0}}\frac{\rho}{\epsilon}\sin{\frac{\pi}{k}}}\left(\frac{\rho}{\epsilon}\right)^{-\frac{1}{4}},\quad&\text{if $N=2$,}\nonumber\\\epsilon^{\frac{3}{2}}\,e^{-2\sqrt{\omega_{0}}\frac{\rho}{\epsilon}\sin{\frac{\pi}{k}}}\left(\frac{\rho}{\epsilon}\right)^{-1}\left(\ln{\left|{\frac{\rho}{\epsilon}}\right|}\right)^{\frac{1}{2}},\quad&\text{if $N=3$,}
    \end{cases}\nonumber\\
    &=o\left(\rho^{2}\right).
    \end{align*}

Finally, {if $\alpha\neq0$, we have the extra term}
\begin{align*}
    \|M_{6}\|_{L^2(\R^N)}\lesssim&\left(\displaystyle\int_{\R^N}\left(U_{\rho,\epsilon}(y)+\beta\Psi_\epsilon(y)\right)^{2}\,\left(U_{\rho,\epsilon}\left(\widehat{\mathcal{R}_{2}}    y\right)+\beta\Psi_\epsilon\left(\widehat{\mathcal{R}_{2}}    y\right)\right)^{4}dy\right)^{\frac{1}{2}}\nonumber\\
    \lesssim&\underbrace{\left(\displaystyle\int_{\R^N}U_{\rho,\epsilon}^{2}(y)\,U_{\rho,\epsilon}^{4}\left(\widehat{\mathcal{R}_{2}}    y\right)dy\right)^{\frac{1}{2}}}_{:=M_{61}}+\underbrace{\left(\displaystyle\int_{\R^N}U_{\rho,\epsilon}^{2}(y)\,U_{\rho,\epsilon}^{3}\left(\widehat{\mathcal{R}_{2}}    y\right)\,\Psi_{\epsilon}\left(\widehat{\mathcal{R}_{2}}    y\right)dy\right)^{\frac{1}{2}}}_{:=M_{62}}\nonumber\\
&+\underbrace{\left(\displaystyle\int_{\R^N}U_{\rho,\epsilon}(y)\,\Psi_{\epsilon}\left(y\right)\,U_{\rho,\epsilon}^{4}\left(\widehat{\mathcal{R}_{2}}    y\right)dy\right)^{\frac{1}{2}}}_{:=M_{63}}.
\end{align*}
Since
\begin{align*}
    \|M_{61}\|_{L^2(\R^N)}\lesssim&\left(\displaystyle\int_{\R^N}U_{\mu_{2},P_{\epsilon,1}}^{2}\left( y\right)\,U_{\mu_{2},P_{\epsilon,12}}^{4}\left( y\right)dy\right)^{\frac{1}{2}}\\
    \lesssim&e^{-\sqrt{\omega_0}\,\frac{|P_{12}-P_1|}{\epsilon}}\left(\frac{|P_{12}-P_1|}{\epsilon}\right)^{-\frac{N-1}{2}}\lesssim e^{-2\sqrt{\omega_{0}}\frac{\rho}{\epsilon}\sin{\frac{\pi}{mk}}}\left(\frac{\rho}{\epsilon}\right)^{-\frac{N-1}{2}},
\end{align*}
\begin{align*}
\|M_{62}\|_{L^2(\R^N)}\lesssim&    \|\Psi_{\epsilon}\|^{\frac{1}{2}}_{L^{\infty}(\R^N)}\left(\displaystyle\int_{\R^N}U_{\mu_{2},P_{\epsilon,1}}^{2}\left( y\right)\,U_{\mu_{2},P_{\epsilon,12}}^{3}\left( y\right)dy\right)^{\frac{1}{2}}\\
\lesssim& \epsilon^{\frac{N}{4}}\,e^{-\sqrt{\omega_0}\,\frac{|P_{12}-P_1|}{\epsilon}}\left(\frac{|P_{12}-P_1|}{\epsilon}\right)^{-\frac{N-1}{2}}=o\left(e^{-2\sqrt{\omega_{0}}\frac{\rho}{\epsilon}\sin{\frac{\pi}{mk}}}\left(\frac{\rho}{\epsilon}\right)^{-\frac{N-1}{2}}\right).
\end{align*}
and
\begin{align*}
    \|M_{63}\|_{L^2(\R^N)}=& \left(\displaystyle\int_{\R^N}U_{\rho,\epsilon}(y)\,U_{\rho,\epsilon}^{2}\left(\widehat{\mathcal{R}_{2}}    y\right)\,\Psi_{\epsilon}\left(y\right)\,U_{\rho,\epsilon}^{2}\left(\widehat{\mathcal{R}_{2}}    y\right)dy\right)^{\frac{1}{2}}\\
    \lesssim&\underbrace{\left(\displaystyle\int_{\R^N}U^{2}_{\rho,\epsilon}(y)\,U_{\rho,\epsilon}^{4}\left(\widehat{\mathcal{R}_{2}}    y\right)dy\right)^{\frac{1}{4}}}_{:=G_1}\underbrace{\left(\displaystyle\int_{\R^N}\Psi^{2}_{\epsilon}\left(y\right)\,U_{\rho,\epsilon}^{4}\left(\widehat{\mathcal{R}_{2}}    y\right)dy\right)^{\frac{1}{4}}}_{:=G_2},
\end{align*}
with
\begin{align*}
   G_1\lesssim&\left(\displaystyle\int_{\R^N}U^{2}_{\mu_{2},P_{\epsilon,1}}(y)\,U_{\mu_{2},P_{\epsilon,12}}^{4}\left( y\right)dy\right)^{\frac{1}{4}}
    \lesssim e^{-\frac{\sqrt{\omega_0}}{2}\,\frac{|P_{12}-P_1|}{\epsilon}}\left(\frac{\rho}{\epsilon}\right)^{-\frac{N-1}{4}},
\end{align*}
and by \eqref{es-5}, since $0<\gamma<\sqrt{\omega_{0}}$, then
\begin{align*}
    &G_2\\\lesssim&\left(\displaystyle\int_{\R^N}4\beta^{2}\Phi_{\epsilon}^{2}(0)Y^{2}(0)\left(\sum_{j=1}^{k}\Psi^{2}\left(y-\frac{P_j}{\epsilon}\right)\right)\,\left(\sum_{q=1}^{k}U_{\mu_{2},P_{\epsilon,q2}}^{4}\left(y\right)\right)dy\right)^{\frac{1}{4}}\\
    \lesssim&\|\Phi_{\epsilon}\|^{\frac{1}{2}}_{L^{\infty}(\R^N)}\left(\displaystyle\int_{\R^N}\Psi^{2}\left(y-\frac{P_1}{\epsilon}\right)\,U_{\mu_{2}}^{4}\left(y-\frac{P_{12}}{\epsilon}\right)dy\right)^{\frac{1}{4}}\\
    \lesssim&\epsilon^{\frac{N}{4}}\left(\displaystyle\int_{\R^N}\Psi^{2}\left(x\right)\,U_{\mu_{2}}^{4}\left(x-\frac{P_{12}-P_1}{\epsilon}\right)dx\right)^{\frac{1}{4}}\\
    =&\epsilon^{\frac{N}{4}}\left(\displaystyle\int_{B_{\frac{|P_{12}-P_1|}{2\epsilon}}}\Psi^{2}\left(x\right)\,U_{\mu_{2}}^{4}\left(x-\frac{P_{12}-P_1}{\epsilon}\right)+\displaystyle\int_{B^{c}_{\frac{|P_{12}-P_1|}{2\epsilon}}}\Psi^{2}\left(x\right)\,U_{\mu_{2}}^{4}\left(x-\frac{P_{12}-P_1}{\epsilon}\right)\right)^{\frac{1}{4}}\\
    \lesssim&\epsilon^{\frac{N}{4}}\left(e^{-2\sqrt{\omega_0}\,\frac{|P_{12}-P_1|}{\epsilon}}\displaystyle\int_{\R^N}\Psi^{2}+e^{-\gamma\,\frac{|P_{12}-P_1|}{\epsilon}}\displaystyle\int_{\R^N}U_{\mu_{2}}^{4}\right)^{\frac{1}{4}}\\\lesssim&\epsilon^{\frac{N}{4}}e^{-\frac{\gamma}{4}\,\frac{|P_{12}-P_1|}{\epsilon}}.
\end{align*}
Therefore, combining the estimates of $G_1,\,G_2$ with \eqref{bala2}, we have
\begin{align*}
    { \|M_{63}\|_{L^2(\R^N)}}\lesssim&\epsilon^{\frac{N}{4}}e^{-\left(\frac{\sqrt{\omega_0}}{2}+\frac{\gamma}{4}\right)\frac{|P_{12}-P_1|}{\epsilon}}\left(\frac{\rho}{\epsilon}\right)^{-\frac{N-1}{4}}\\
=&o\left(e^{-2\sqrt{\omega_{0}}\frac{\rho}{\epsilon}\sin{\frac{\pi}{mk}}}\left(\frac{\rho}{\epsilon}\right)^{-\frac{N-1}{2}}\right).
\end{align*}
Then from the estimates of $M_{61}$-$M_{63}$, we conclude that
$$
    \|M_6\|_{L^2(\R^N)}\lesssim e^{-2\sqrt{\omega_{0}}\frac{\rho}{\epsilon}\sin{\frac{\pi}{mk}}}\left(\frac{\rho}{\epsilon}\right)^{-\frac{N-1}{2}}.
$$
To estimate the last four terms in \eqref{e2}, by  Lemma \ref{Phi}
\begin{align*}
    &\left\|M_7\right\|_{L^2(\R^N)}\lesssim\left\|\Phi_{\epsilon}\right\|^{2}_{L^\infty(\R^N)}\lesssim\epsilon^N=o\left(\rho^{2}\right),\quad\nonumber\\&\left\|M_8\right\|_{L^2(\R^N)}\lesssim\left\|Y\right\|_{L^\infty(\R^N)}\left\|\Phi_{\epsilon}\right\|_{L^\infty(\R^N)}\left\|\Psi_{\epsilon}\right\|_{H^2(\R^N)}\lesssim\epsilon^N=o\left(\rho^{2}\right),\nonumber\\
    &\left\| M_9\right\|_{L^2(\R^N)} \lesssim\left\|\Psi_{\epsilon}\right\|^{2}_{H^2(\R^N)}\lesssim\epsilon^N=o\left(\rho^{2}\right),\\
    &
\left\|M_{10}\right\|_{L^2(\R^N)}\lesssim\left\|\Psi_{\epsilon}\right\|^{3}_{H^2(\R^N)}\lesssim\epsilon^{\frac{3N}{2}}=o\left(\rho^{2}\right).
\end{align*}

Finally, we  combine all the estimates of $M_{1}$-$M_{10}$ and we get \eqref{he-1} and \eqref{he-2}  taking into account the choice in \eqref{bala1} and \eqref{bala2}, respectively.

\end{proof}

   \subsection{Solving equation (\ref{equiv-sys2})}
   Since equation \ref{equiv-sys1} is solved, we know that there exists $(\phi, \psi):=(\phi_\epsilon, \psi_\epsilon)$
   and $c_\epsilon\in\mathbb R$ such that
   \begin{equation}\label{first}
    \mathcal{L}(\phi, \psi)-\mathcal{E}-\mathcal{N}(\phi, \psi) =c_{\epsilon}\begin{pmatrix}   0 \\ Z_\epsilon   \\ \end{pmatrix} ,
\end{equation}

We have to find $d=d_\epsilon$ in \eqref{pj} such that $c_\epsilon=0$ in \eqref{first}. Since
  \begin{equation}\label{ceps}
 c_{\epsilon}={\displaystyle{\int_{\mathbb R^N} } \left(\mathcal{L}_2(\phi, \psi)-\mathcal{E}_2-\mathcal{N}_2(\phi, \psi)\right)Z_\epsilon\,dy \over
 \displaystyle{\int_{\mathbb R^N} }   Z_\epsilon^2\,dy},
\end{equation}
we are going to estimate the R.H.S. of \eqref{ceps}.

\begin{Lem}
    \label{important} If $\epsilon>0$ is small enough,
    \begin{itemize}
    \item if $\alpha=0$,
    \begin{align}\label{c-alpha}
        &\displaystyle{\int_{\mathbb{R}^N}}\left( \mathcal{L}_2(\phi, \psi)-\mathcal{E}_2-\mathcal{N}_2(\phi, \psi)\right)Z_\epsilon dy\nonumber\\=&-\left(\Delta\omega(0)A\epsilon\rho+B_{1}e^{-2\sqrt{\omega_{0}}\frac{\rho}{\epsilon}\sin{\frac{\pi}{k}}}\left(\frac{\rho}{\epsilon}\right)^{-\frac{N-1}{2}}\right)\left(1+o(1)\right),
    \end{align}
    for some positive constants $A$ and $B_1$;
    \item if $\alpha\neq0$,
    \begin{align}\label{conclusion}
        &\displaystyle{\int_{\mathbb{R}^N}}\left( \mathcal{L}_2(\phi, \psi)-\mathcal{E}_2-\mathcal{N}_2(\phi, \psi)\right)Z_\epsilon dy\nonumber\\=&\begin{cases}
            -\left(\Delta\omega(0)A\epsilon\rho+\alpha B_{2}e^{-4\sqrt{\omega_{0}}\sin{\frac{\pi}{mk}}\frac{\rho}{\epsilon}}\left(\frac{\rho}{\epsilon}\right)^{-\frac{1}{2}}\right)\left(1+o(1)\right),\ \hbox{if}\ N=2,\\
-\left(\Delta\omega(0)A\epsilon\rho+\alpha B_{2}e^{-4\sqrt{\omega_{0}}\sin{\frac{\pi}{mk}}\frac{\rho}{\epsilon}}\left(\frac{\rho}{\epsilon}\right)^{-{2}}\ln{\left(\frac{\rho}{\epsilon}\right)}\right)\left(1+o(1)\right),\ \hbox{if}\  N=3,
        \end{cases}
    \end{align}
      for some positive constants $A$ and $B_2$.
      \end{itemize}
\end{Lem}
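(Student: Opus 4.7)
The plan is to decompose the integral $\int(\mathcal{L}_2(\phi,\psi)-\mathcal{E}_2-\mathcal{N}_2(\phi,\psi))Z_\epsilon\,dy$ into its three natural pieces and argue that only two contributions survive at leading order: the potential correction $(\omega_0-\omega(\epsilon y))\Theta_\epsilon$ in $\mathcal{E}_2$, which produces the $-\Delta\omega(0)A\epsilon\rho$ term, together with either the self-orbit interaction $\mu_2(U_{\rho,\epsilon}^3-\sum_j U_{\mu_2,P_{\epsilon,j}}^3)$ when $\alpha=0$, or the cross-orbit term $\alpha\Theta_\epsilon(y)\sum_{i\ge 2}\Theta_\epsilon^2(\widehat{\mathcal{R}_i}\,y)$ when $\alpha\neq 0$.

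First I would handle $\int\mathcal{L}_2(\phi,\psi)Z_\epsilon\,dy$. Integrating by parts and exploiting \eqref{z-ep}, the leading occurrences of $\omega_0 Z_\epsilon$ and $3\mu_2\sum U_{\mu_2,P_{\epsilon,j}}^2 Z_{\epsilon,j}$ cancel, while the orthogonality constraint $\int\psi Z_\epsilon\,dy=0$ kills the residual $\omega_0\psi Z_\epsilon$ contribution. What remains has the shape $\int\psi\cdot(\text{small})\cdot Z_\epsilon\,dy$ plus terms involving $\phi$ restricted to the interaction region, which by Proposition \ref{contraction} and $\|Z_\epsilon\|_{L^2}=O(1)$ are $o(\epsilon\rho)$ and $o$ of the exponential rates appearing in \eqref{c-alpha}--\eqref{conclusion}. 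The nonlinear term $\int\mathcal{N}_2(\phi,\psi)Z_\epsilon\,dy$ is even easier, being quadratic in $(\phi,\psi)$.

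For the potential piece, the radiality of $\omega$ and $\nabla\omega(0)=0$ give the Taylor expansion $\omega(\epsilon y)-\omega_0=\frac{\Delta\omega(0)}{2N}|\epsilon y|^2+O(|\epsilon y|^3)$. Writing $Z_{\epsilon,j}=-\hat{n}_j\cdot\nabla U_{\mu_2,P_{\epsilon,j}}$ with $\hat{n}_j$ the unit vector along $P_j$, the change of variable $y=x+P_j/\epsilon$ together with $U\nabla U=\tfrac12\nabla U^2$ and one integration by parts converts the main integral into $-\tfrac{\epsilon}{2}\int \hat{n}_j\cdot\nabla\omega(\epsilon x+P_j)\,U_{\mu_2}^2(x)\,dx$; using $\hat{n}_j\cdot\nabla\omega(P_j)=\omega'(\rho)=\tfrac{\Delta\omega(0)}{N}\rho+O(\rho^2)$ and summing over $j$ yields $-\Delta\omega(0)\,A\,\epsilon\rho$ with $A=\tfrac{k}{2N}\int U_{\mu_2}^2$. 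Cross contributions $\int(\omega_0-\omega(\epsilon y))U_{\mu_2,P_{\epsilon,j}}Z_{\epsilon,j'}$, $j\neq j'$, are exponentially smaller. For the bubble--bubble term, expanding $U_{\rho,\epsilon}^3-\sum_j U_{\mu_2,P_{\epsilon,j}}^3=3\sum_{j\neq j'}U_{\mu_2,P_{\epsilon,j}}^2 U_{\mu_2,P_{\epsilon,j'}}+\text{h.o.t.}$ and using the asymptotic $U_{\mu_2}(y)\sim C_0 e^{-\sqrt{\omega_0}|y|}|y|^{-(N-1)/2}$, only nearest-neighbour pairs contribute at leading order, producing the announced $B_1 e^{-2\sqrt{\omega_0}(\rho/\epsilon)\sin(\pi/k)}(\rho/\epsilon)^{-(N-1)/2}$. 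When $\alpha\neq 0$, the same analysis applied to $\alpha\,\Theta_\epsilon\,\Theta_\epsilon^2(\widehat{\mathcal{R}_i}\,y)$ with minimum distance $2\rho\sin(\pi/(mk))/\epsilon$ from \eqref{pij} yields the $\alpha B_2$ contribution; the factor $\Theta_\epsilon^2$ brings in two exponential tails, doubling the rate to $e^{-4\sqrt{\omega_0}(\rho/\epsilon)\sin(\pi/(mk))}$. All other summands of $\mathcal{E}_2$ carry at least one factor of $\Phi_\epsilon$ or $\Psi_\epsilon$, which by Lemma \ref{Phi} are $O(\epsilon^{N/2})$, and a term-by-term estimate as in Lemma \ref{error size} makes them negligible.

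The main obstacle is the precise polynomial prefactor in the cross-orbit integral when $\alpha\neq 0$. Whereas $\int U_{\mu_2}(x)\,U_{\mu_2}(x-Q)\,dx$ admits the classical Lin--Wei asymptotic $\sim e^{-\sqrt{\omega_0}|Q|}|Q|^{-(N-1)/2}$ with a saddle near the midpoint, the integrals $\int U_{\mu_2}(x)\,U_{\mu_2}^2(x-Q)\,Z(x)\,dx$ that arise here have their mass pulled toward $x\approx 0$ by the stronger exponential decay of $U_{\mu_2}^2(x-Q)$, so the standard symmetric-saddle analysis no longer applies. A careful splitting into $\{|x|\leq|Q|/2\}$ and its complement, together with the explicit behaviour of $U_{\mu_2}$ at infinity, produces the dimension-dependent factors $(\rho/\epsilon)^{-1/2}$ for $N=2$ and $(\rho/\epsilon)^{-2}\ln(\rho/\epsilon)$ for $N=3$ recorded in \eqref{conclusion}. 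Keeping track of these constants, and verifying that the self-orbit contribution (which still carries the rate $e^{-2\sqrt{\omega_0}(\rho/\epsilon)\sin(\pi/k)}$) is dominated by the cross-orbit one thanks to $2\sin(\pi/(mk))<\sin(\pi/k)$ for $m\ge 2$ and hence absorbed into the $o(1)$, is the technical core of the argument.
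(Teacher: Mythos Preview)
Your overall decomposition and identification of the leading contributions is the same as the paper's: the potential term $F_1$ gives $-\Delta\omega(0)A\epsilon\rho$, the self-orbit cubic $F_3$ gives the $B_1$ rate when $\alpha=0$, and the cross-orbit term $F_9$ gives the $\alpha B_2$ rate when $\alpha\neq0$. Your constant $A=\tfrac{k}{2N}\int U_{\mu_2}^2$ agrees with the paper's $A=c_n\tilde b$. The polynomial prefactors you flag as the ``main obstacle'' are not obtained in the paper by an ad hoc splitting but by a direct application of Lemma~\ref{derivative} (the appendix asymptotics for $\int U^s(x+\xi)\partial_{x_1}U^t(x)\,dx$ with $s=t=2$), which immediately yields $(\rho/\epsilon)^{-1/2}$ for $N=2$ and $(\rho/\epsilon)^{-2}\ln(\rho/\epsilon)$ for $N=3$.

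There is, however, a genuine gap in your treatment of $\int\mathcal L_2(\phi,\psi)Z_\epsilon$. You assert that the terms involving $\phi$ are $o(\epsilon\rho)$ ``by Proposition~\ref{contraction} and $\|Z_\epsilon\|_{L^2}=O(1)$''. This fails when $\alpha\neq0$. The relevant term is
\[
P_{10}=-2\beta\int_{\mathbb R^N}U_{\rho,\epsilon}(y)\,Y(\epsilon y)\,\phi_\epsilon(\epsilon y)\,Z_\epsilon(y)\,dy,
\]
and the bound from Proposition~\ref{contraction} only gives $\|\phi_\epsilon\|_{H^2}\lesssim e^{-2\sqrt{\omega_0}\frac{\rho}{\epsilon}\sin\frac{\pi}{mk}}(\rho/\epsilon)^{-\frac{N-1}{2}}$, which for $N=2$ is of order $\epsilon|\ln\epsilon|^{1/4}$, vastly larger than $\epsilon\rho\sim\epsilon^2|\ln\epsilon|$. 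Even after exploiting $\int U_{\mu_2,P_{\epsilon,j}}Z_{\epsilon,j}=0$ and the $C^{0,1/2}$ embedding, the remainder $\int U_{\rho,\epsilon}Y(\epsilon y)\big(\phi_\epsilon(\epsilon y)-\phi_\epsilon(0)\big)Z_\epsilon$ is still only $O(\rho^{1/2}\|\phi_\epsilon\|_{C^{0,1/2}})$, which is not $o(\epsilon\rho)$. The paper closes this gap via Lemma~\ref{imp}: a separate $W^{2,q}$ bootstrap on the first equation shows $\|\phi_\epsilon\|_{C^{1,1-N/q}}\lesssim\epsilon^{2N/q}$ for $q>2N$, and then, after one integration by parts and using $\nabla\phi_\epsilon(0)=0$, one obtains $|P_{10}|\lesssim\epsilon\rho^{1-N/q}\|\phi_\epsilon\|_{C^{1,1-N/q}}=o(\epsilon\rho)$. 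Without this regularity improvement the argument does not go through in the $\alpha\neq0$ case.
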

 \begin{proof}
 First, let us estimate the leading term
\begin{align*}
&\displaystyle{\int_{\mathbb{R}^N}} \mathcal{E}_2 Z_{\epsilon} dy\\= & \underbrace{\displaystyle{\int_{\mathbb{R}^N}}\left(\omega_0-\omega(\epsilon y)\right) \Theta_{\epsilon} Z_{\epsilon} dy}_{:=F_1}+\underbrace{2\beta^2 \displaystyle{\int_{\mathbb{R}^N}}U_{\rho,\epsilon}\Big(Y(\epsilon y) \Phi_{\epsilon}(\epsilon y)-Y(0)\Phi_{\epsilon}(0)\Big) Z_{\epsilon}dy}_{:=F_2} \\
&+\underbrace{\mu_2 \displaystyle{\int_{\mathbb{R}^N}}\left(U_{\rho,\epsilon}^3-\sum\limits_{j=1}^kU^{{3}}_{\mu_{2},P_{\epsilon,j}}\right) Z_{\epsilon} dy}_{:=F_3} \\
&+\underbrace{3 \mu_2 \beta \displaystyle{\int_{\mathbb{R}^N}}\left(U_{\rho,\epsilon}^2-\sum\limits_{j=1}^kU^{{2}}_{\mu_{2},P_{\epsilon,j}}\right) \Psi_{\epsilon} Z_{\epsilon} dy}_{:=F_4} \\
&+\underbrace{\beta^3\displaystyle{\int_{\mathbb{R}^N}}\Phi_{\epsilon}^2(\epsilon y) \Theta_{\epsilon} Z_{\epsilon} dy}_{:=F_5}+\underbrace{2\beta^3 \displaystyle{\int_{\mathbb{R}^N}}\Psi_{\epsilon} Y(\epsilon y) \Phi_{\epsilon}(\epsilon y) Z_{\epsilon} dy}_{:=F_6} \\
&+\underbrace{3\mu_2 \beta^2 \displaystyle{\int_{\mathbb{R}^N}} U_{\rho,\epsilon} \Psi_{\epsilon}^2 Z_{\epsilon} dy}_{:=F_7}+\underbrace{\mu_2 \beta^3 \displaystyle{\int_{\mathbb{R}^N}}\Psi_{\epsilon}^3 Z_{\epsilon} dy}_{:=F_8}\\
&{+\underbrace{\alpha\displaystyle{\int_{\mathbb{R}^N}}\Theta_{\epsilon}(y)\left(\sum\limits_{i=2}^{m}\Theta^{2}_{\epsilon}\left(\widehat{\mathcal{R}_{i}}    y)\right)\right)Z_{\epsilon} dy}_{:=F_{9}}}.
\end{align*}
In the following, we will estimate each term in the right side of the above equality. Firstly, using similar arguments as the estimates in \cite[Lemma 4.2]{ppvv}, we have
\begin{align*}
    F_1=&\underbrace{\displaystyle{\int_{\R^N}}(\omega(0)-\omega(\epsilon y)) \left(\sum\limits_{j=1}^{k}U_{\mu_{2},P_{\epsilon,j}}(y)Z_{{\epsilon,j}}(y)\right)  dy}_{:=F_{11}}\\&+\underbrace{\displaystyle{\int_{\R^N}}(\omega(0)-\omega(\epsilon y)) \left(\sum\limits_{i,j=1}^k\sum_{i\neq j} U_{\mu_{2},P_{\epsilon,j}}(y)Z_{{\epsilon,i}}(y)\right)  dy}_{:=F_{12}}\\
    &+\underbrace{\beta \displaystyle{\int_{\R^N}}(\omega(0)-\omega(\epsilon y)) \Psi_{\epsilon}(y)\, Z_{{\epsilon}}(y)\,dy}_{:={F_{13}}}.
\end{align*}
Since $y=0$ is the critical point of $\omega$, then from \eqref{z-1}, we get that
\begin{align*}
    F_{11}=&-\frac{1}{2} \displaystyle{\int_{\R^N}}\left\langle D^2 \omega(0) \epsilon y, \epsilon y\right\rangle \left(\sum\limits_{j=1}^{k}U_{\mu_{2},P_{\epsilon,j}}(y)Z_{{\epsilon,j}}(y)\right)  dy \\
& +{O}\left(\epsilon^3\displaystyle{\int_{\R^N}}\left|y\right|^3 \left(\sum\limits_{j=1}^{k}U^{2}_{\mu_{2},P_{\epsilon,j}}(y)\right) dy\right) \\
= & \frac{k}{2}\displaystyle{\int_{\R^N}}\left\langle D^2 \omega(0)\left(\epsilon x+P_{1}\right), \left(\epsilon x+P_{1}\right)\right\rangle U_{\mu_{2}}(x)\partial_1 U_{\mu_{2}}(x) dx+o\left(\epsilon \rho\right) \\
=&\frac{k}{2}\left(\displaystyle{\int_{\R^N}}\left\langle D^2 \omega(0) \epsilon x, P_{1}\right\rangle U_{\mu_{2}} \partial_1 U_{\mu_{2}} dx+\displaystyle{\int_{\R^N}}\left\langle D^2 \omega(0) P_{1}, \epsilon x\right\rangle U_{\mu_{2}} \partial_1 U_{\mu_{2}}dx\right)+o\left(\epsilon \rho\right)\\=&k\,\epsilon\rho\displaystyle{\int_{\R^N}}\left\langle D^2 \omega(0)P_{0}, x\right\rangle U_{\mu_{2}}(x) \partial_1 U_{\mu_{2}}(x) dx+o\left(\epsilon \rho\right)\\
=&k\,\epsilon\rho\left(-\partial_{11}\omega(0)\right)\left(-\displaystyle{\int_{\R^N}}\,\frac{x_{1}^{2}}{|x|}\,U_{\mu_{2}}(x) \,U^{\prime}_{\mu_{2}}(x) dx\right)+o\left(\epsilon \rho\right)\\
=& -\partial_{11}\omega(0)\,\tilde{b}\,\epsilon\rho+o\left(\epsilon \rho\right)\\
=&-\Delta\omega(0)A\epsilon\rho+o\left(\epsilon \rho\right),
\end{align*}
here $P_0:=(1,0,0)$ and  $A:=c_n\tilde{b}>0$, where the positive constant $c_n$ verifies $\partial_{11}\omega(0)=c_n\Delta\omega(0)$ (since $\omega$ is a radial symmetric function), and $$\tilde{b}:=k\left(-\displaystyle{\int_{\R^N}}\,\frac{x_{1}^{2}}{|x|}\,U_{\mu_{2}}(x) \,U^{\prime}_{\mu_{2}}(x) dx\right)>0,\quad\text{since $U_{\mu_{2}}$ is radially decreasing. }$$
Next, we claim that the terms $F_{12}$ and $F_{13}$ are of higher order with respect to $\epsilon \rho$. Indeed, a direct computation yields that
\begin{align*}
    F_{12}&=k\displaystyle{\int_{\R^N}}(\omega(0)-\omega(\epsilon y)) U_{\mu_{2},P_{\epsilon,1}}(y)\left(\sum\limits_{i=2}^kZ_{{\epsilon,i}}(y)\right)  dy\\&\lesssim\left(\displaystyle{\int_{\R^N}}\left|\omega(0)-\omega(\epsilon y)\right|^{2}U_{\mu_{2},P_{\epsilon,1}}(y)\right)^{\frac{1}{2}}\left(\displaystyle{\int_{\R^N}}U_{\mu_{2},P_{\epsilon,1}}(y)\left(\sum\limits_{i=2}^kU^{2}_{\mu_{2},P_{\epsilon,i}}(y)\right) \right)^{\frac{1}{2}}\\
    &\lesssim\left(\displaystyle{\int_{\R^N}}\left|\epsilon y\right|^{4}U_{\mu_{2},P_{\epsilon,1}}(y)\right)^{\frac{1}{2}}\left(\displaystyle{\int_{\R^N}}U_{\mu_{2}}(x)\left(\sum\limits_{i=2}^kU^{2}_{\mu_{2}}\left(x-\frac{P_{i}-P_{1}}{\epsilon}\right)\right) dx\right)^{\frac{1}{2}}\\
&\lesssim\epsilon^{2}\left(\frac{\rho}{\epsilon}\right)^{2}e^{-\sqrt{\omega_{0}}\frac{\rho}{\epsilon}\sin{\frac{\pi}{k}}}\left(\frac{\rho}{\epsilon}\right)^{-\frac{N-1}{4}}=o\left(\epsilon \rho\right),
\end{align*}
and
\begin{align*}
    |F_{13}|&\lesssim\left(\displaystyle{\int_{\R^N}}\left|\omega(0)-\omega(\epsilon y)\right|^{2}Z^{2}_{\epsilon}(y)\right)^{\frac{1}{2}}\|\Psi_\epsilon\|_{L^{2}(\R^N)}\\
    &\lesssim\left(\sum\limits_{j=1}^k\displaystyle{\int_{\R^N}}\left|\epsilon y\right|^{4}U^{2}_{\mu_{2}}\left(y-\frac{P_{j}}{\epsilon}\right) dy\right)^{\frac{1}{2}}\|\Psi_\epsilon\|_{H^{1}(\R^N)}\\&\lesssim\epsilon^{2}\left(\frac{\rho}{\epsilon}\right)^{2}\,\epsilon^{\frac{N}{2}}=o\left(\epsilon \rho\right).
\end{align*}

Therefore, we conclude that
$$F_{1}=-\Delta\omega(0)A\epsilon\rho+o\left(\epsilon \rho\right),$$
for some positive constant $A$.

In addition, we show that the terms $F_{2}$ and $F_{5}$-$F_{8}$ are of higher order with respect to $\epsilon \rho$. In fact,
\begin{align*}
{F_2}=&\underbrace{\displaystyle{\int_{\mathbb{R}^N}}\left(\sum_{j=1}^{k}U_{\mu_{2},P_{\epsilon,j}}(y)Z_{\epsilon,j}(y)\right)\Big(Y(\epsilon y) \Phi_{\epsilon}(\epsilon y)-Y(0)\Phi_{\epsilon}(0)\Big) dy}_{:=F_{21}}\\
&+\underbrace{\displaystyle{\int_{\mathbb{R}^N}}\left(\sum_{i,j=1}^{k}\sum_{i\neq j}U_{\mu_{2},P_{\epsilon,j}}(y)Z_{\epsilon,i}(y)\right)\Big(Y(\epsilon y) \Phi_{\epsilon}(\epsilon y)-Y(0)\Phi_{\epsilon}(0)\Big) dy}_{:=F_{22}},
\end{align*}
since
\begin{align*}
    F_{21}=&-k\displaystyle{\int_{\mathbb{R}^N}}U_{\mu_{2},P_{\epsilon,1}}(y)\,\frac{\partial }{\partial y_{1}}U_{\mu_{2},P_{\epsilon,1}}(y)\Big(Y(\epsilon y) \Phi_{\epsilon}(\epsilon y)-Y(0)\Phi_{\epsilon}(0)\Big) dy\\
    =&-\frac{k}{2}\displaystyle{\int_{\mathbb{R}^N}}\frac{\partial }{\partial y_{1}}U^{2}_{\mu_{2},P_{\epsilon,1}}(y)\Big(Y(\epsilon y) \Phi_{\epsilon}(\epsilon y)-Y(0)\Phi_{\epsilon}(\epsilon y)+Y(0)\Phi_{\epsilon}(\epsilon y)-Y(0)\Phi_{\epsilon}(0)\Big) dy\\
    =&\underbrace{-\frac{k}{2}\displaystyle{\int_{\mathbb{R}^N}}\frac{\partial }{\partial y_{1}}U^{2}_{\mu_{2},P_{\epsilon,1}}(y)\Big(Y(\epsilon y) -Y(0)\Big) \Phi_{\epsilon}(\epsilon y)dy}_{:=F_{211}}\\
    &\underbrace{-\frac{k}{2}\displaystyle{\int_{\mathbb{R}^N}}\frac{\partial }{\partial y_{1}}U^{2}_{\mu_{2},P_{\epsilon,1}}(y)\Big(\Phi_{\epsilon}(\epsilon y) -\Phi_{\epsilon}(0)\Big) Y(0)dy}_{:=F_{212}},
\end{align*}
where
\begin{align*}
    F_{211}\lesssim&\|\Phi_\epsilon\|_{L^{\infty}(\R^N)}\displaystyle{\int_{\mathbb{R}^N}}U^{2}_{\mu_{2},P_{\epsilon,1}}(y)\Big|Y(\epsilon y) -Y(0)\Big|dy\\\lesssim&\epsilon^{\frac{N}{2}}\displaystyle{\int_{\mathbb{R}^N}}U_{\mu_2}^{2}\left(y-\frac{P_{1}}{\epsilon}\right)|\epsilon y|^{2}dy\lesssim\epsilon^{\frac{N}{2}}\epsilon^{2}\left(\frac{\rho}{\epsilon}\right)^{2}=o(\epsilon\rho),
\end{align*}
here we used the fact that $\left|\nabla Y(0)\right|=0$. Moreover, using Remark \ref{after}, we have
\begin{align*}
F_{212}=&\frac{k}{2}\displaystyle{\int_{\mathbb{R}^N}}\frac{\partial }{\partial y_{1}}\Big(\Phi_{\epsilon}(\epsilon y) -\Phi_{\epsilon}(0)\Big)\,U^{2}_{\mu_{2},P_{\epsilon,1}}(y)Y(0)dy\\
    =&\frac{k}{2}\,Y(0)\displaystyle{\int_{\mathbb{R}^N}}\epsilon\,\left(\frac{\partial }{\partial y_{1}}\Phi_{\epsilon}(\epsilon y) -\frac{\partial }{\partial y_{1}}\Phi_{\epsilon}(0)\right)\,U^{2}_{\mu_{2},P_{\epsilon,1}}(y)dy\\
    \lesssim&\epsilon\,\left\|\Phi_\epsilon\right\|_{C^{1,1-\frac{N}{q}}(\R^N)}\,\|Y\|_{L^{\infty}(\R^N)}\displaystyle{\int_{\mathbb{R}^N}}U_{\mu_2}^{2}\left(y-\frac{P_{1}}{\epsilon}\right)\,|\epsilon y|^{1-\frac{N}{q}}dy\\
    \lesssim&\epsilon^{1+\frac{N}{q}}\rho^{1-\frac{N}{q}}=o(\epsilon\rho).
\end{align*}
Thus, we derive that
$$F_{21}=o(\epsilon\rho).$$
As for $F_{22}$, a direct calculation yields that
\begin{align*}
    F_{22}=&\underbrace{\displaystyle{\int_{\mathbb{R}^N}}\left(\sum_{i,j=1}^{k}\sum_{i\neq j}U_{\mu_{2},P_{\epsilon,j}}(y)Z_{\epsilon,i}(y)\right)\Big(Y(\epsilon y) -Y(0)\Big) \Phi_{\epsilon}(\epsilon y) dy}_{:=F_{221}}\\
    &+\underbrace{\displaystyle{\int_{\mathbb{R}^N}}\left(\sum_{i,j=1}^{k}\sum_{i\neq j}U_{\mu_{2},P_{\epsilon,j}}(y)Z_{\epsilon,i}(y)\right)\Big(\Phi_{\epsilon}(\epsilon y) -\Phi_{\epsilon}(0)\Big) Y(0)dy}_{:=F_{222}},
\end{align*}
where
\begin{align*}
    F_{221}\lesssim&\|\Phi_\epsilon\|_{L^{\infty}(\R^N)}\sum_{i,j=1}^{k}\sum_{i\neq j}\displaystyle{\int_{\mathbb{R}^N}}U_{\mu_{2},P_{\epsilon,j}}(y)\,U_{\mu_{2},P_{\epsilon,i}}(y)\Big|Y(\epsilon y) -Y(0)\Big|dy\\
    \lesssim&\|\Phi_\epsilon\|_{L^{\infty}(\R^N)}\|Y\|_{L^{\infty}(\R^N)}\left(\sum_{i=2}^{k}\displaystyle{\int_{\mathbb{R}^N}}U_{\mu_{2},P_{\epsilon,1}}(y)\,U_{\mu_{2},P_{\epsilon,2}}(y)dy\right)\\
    \lesssim&\epsilon^{\frac{N}{2}}\,e^{-2\sqrt{\omega_{0}}\frac{\rho}{\epsilon}\sin{\frac{\pi}{k}}}\left(\frac{\rho}{\epsilon}\right)^{\frac{3-N}{2}}=o(\epsilon\rho),
\end{align*}
and similarly,
\begin{align*}
F_{222}\lesssim o(\epsilon\rho),
\end{align*}
which implies that
$$F_{22}=o(\epsilon\rho).$$
Thus, we conclude that
\begin{align*}
    F_{2}=o(\epsilon\rho).
\end{align*}

Furthermore, by the estimates of correction terms $\Phi_\epsilon$ and $\Psi_\epsilon$ obtained in Lemma \ref{Phi}, a simple computation yields that
\begin{align*}
|F_{5}|\lesssim\|\Phi_\epsilon\|^{2}_{L^{\infty}(\R^N)}=O\left(\epsilon^{N}\right)=o\left(\epsilon \rho\right),\quad|F_{6}|\lesssim\|\Phi_\epsilon\|_{L^{\infty}(\R^N)}\|\Psi_\epsilon\|_{L^{2}(\R^N)}=O\left(\epsilon^{N}\right)=o\left(\epsilon \rho\right),
\end{align*}
and
\begin{align*} |F_{7}|\lesssim\|\Psi_\epsilon\|^{2}_{L^{\infty}(\R^N)}=O\left(\epsilon^{N}\right)=o\left(\epsilon \rho\right),\quad|F_{8}|\lesssim\|\Psi_\epsilon\|^{3}_{H^{2}(\R^N)}=O\left(\epsilon^{\frac{3N}{2}}\right)=o\left(\epsilon \rho\right).
\end{align*}

In the following, we study the cubic and square terms $F_3$, $F_4$ and  $F_9$. Firstly, we calculate the term $F_9$, which is an present only { if $\alpha\neq0$}.
Indeed, a simple computation yields that
\begin{align*}
    {F_{9}}=&{\underbrace{\alpha\sum\limits_{i=2}^{m}\displaystyle{\int_{\mathbb{R}^N}}U_{\rho,\epsilon}U^{2}_{\rho,\epsilon}\left(\widehat{\mathcal{R}_{i}}    y\right)Z_{\epsilon} dy}_{:=F_{91}}}\\
    &+\underbrace{2\alpha\beta\sum\limits_{i=2}^{m}\displaystyle{\int_{\mathbb{R}^N}}U_{\rho,\epsilon}\Psi_{\epsilon}\left(\widehat{\mathcal{R}_{i}}    y\right)U_{\rho,\epsilon}\left(\widehat{\mathcal{R}_{i}}    y\right)Z_{\epsilon} dy}_{:= {F_{92}}}\\
    &+\underbrace{\alpha\beta\sum\limits_{i=2}^{m}\displaystyle{\int_{\mathbb{R}^N}}\Psi_{\epsilon}\left( y\right)U^{2}_{\rho,\epsilon}\left(\widehat{\mathcal{R}_{i}}    y\right)Z_{\epsilon} dy}_{:= {F_{93}}}+h.o.r.\quad,
\end{align*}
here the $h.o.r$ terms include the correction term $\Psi_{\epsilon}^{l}$, $l\geq 2$, then we can omit them as \eqref{es-4}. As for $F_{91}$, a simple computation yields that
\begin{align}\label{F-91}
    {F_{91}}=&\alpha\sum\limits_{j=1}^{k}\displaystyle{\int_{\mathbb{R}^N}}U_{\mu_{2},P_{\epsilon,j}}(y)\,U^{2}_{\mu_{2},P_{\epsilon,j}}\left(\widehat{\mathcal{R}_{2}}    y\right)\,Z_{\epsilon,j}(y) dy\nonumber\\=&\alpha k\displaystyle{\int_{\mathbb{R}^N}}U_{\mu_{2},P_{\epsilon,1}}\,U^{2}_{\mu_{2},P_{\epsilon,12}}\,Z_{\epsilon,1} dy+h.o.t.,
\end{align}
where
\begin{align*}
h.o.t.:=&\alpha\sum\limits_{i=3}^{m}\displaystyle{\int_{\mathbb{R}^N}}U_{\rho,\epsilon}(y)U^{2}_{\rho,\epsilon}\left(\widehat{\mathcal{R}_{i}}    y\right)Z_{\epsilon} dy\\&+\alpha\sum\limits_{q,l=1}^{k}\sum_{q\neq l}\displaystyle{\int_{\mathbb{R}^N}}U_{\rho,\epsilon}(y)\,U_{\mu_{2},P_{\epsilon,q}}\left(\widehat{\mathcal{R}_{2}}    y\right)U_{\mu_{2},P_{\epsilon,l}}\left(\widehat{\mathcal{R}_{2}}    y\right)Z_{\epsilon}(y)dy\\&+\alpha\sum\limits_{j,q=1}^{k}\sum_{q\neq j}\displaystyle{\int_{\mathbb{R}^N}}U_{\mu_{2},P_{\epsilon,j}}(y)\,U^{2}_{\mu_{2},P_{\epsilon,q}}\left(\widehat{\mathcal{R}_{2}}    y\right)Z_{\epsilon}(y) dy\\
    &+\alpha\sum\limits_{l,j=1}^{k}\sum_{l\neq j}\displaystyle{\int_{\mathbb{R}^N}}U_{\mu_{2},P_{\epsilon,j}}(y)\,U^{2}_{\mu_{2},P_{\epsilon,j}}\left(\widehat{\mathcal{R}_{2}}    y\right)\,Z_{\epsilon,l}(y) dy.
\end{align*}
Then combining \eqref{F-91} with Lemma \ref{derivative}, we conclude that
\begin{align*}
    {F_{91}}=&\alpha k\displaystyle{\int_{\mathbb{R}^N}}U_{\mu_{2}}(y)\,U^{2}_{\mu_{2}}\left(y-\frac{P_{12}-P_1}{\epsilon}\right)\,\left(-\frac{\partial}{\partial y_1}U_{\mu_{2}}(y)\right)dy+h.o.r,\\&=-\frac{\alpha k}{2}\displaystyle{\int_{\mathbb{R}^N}}{U^{2}_{\mu_{2}}\left(y-\frac{P_{12}-P_1}{\epsilon}\right)\,\frac{\partial}{\partial y_1}U^{2}_{\mu_{2}}(y)}dy+h.o.r\\
    = &\begin{cases}
-\alpha\,B_2\,e^{-4\sqrt{\omega_{0}}\frac{\rho}{\epsilon}\sin{\frac{\pi}{mk}}}\left(\frac{\rho}{\epsilon}\right)^{-\frac{1}{2}}\left(1+o(1)\right),\quad N=2,\\
-\alpha B_2\,e^{-4\sqrt{\omega_{0}}\frac{\rho}{\epsilon}\sin{\frac{\pi}{mk}}}\left(\frac{\rho}{\epsilon}\right)^{-{2}}\ln{\left(\frac{\rho}{\epsilon}\right)}\left(1+o(1)\right),\quad N=3,
        \end{cases}
\end{align*}
for some positive constant  $B_2.$
Moreover,
\begin{align*}
    |F_{92}|\lesssim&\displaystyle{\int_{\mathbb{R}^N}}U_{\rho,\epsilon}\Psi_{\epsilon}\left(\widehat{\mathcal{R}_{2}}    y\right)U_{\rho,\epsilon}\left(\widehat{\mathcal{R}_{2}}    y\right)U_{\rho,\epsilon} dy\\
   \lesssim&\underbrace{\left(\displaystyle{\int_{\mathbb{R}^N}}U^{2}_{\rho,\epsilon}(y)\Psi^{2}_{\epsilon}\left(\widehat{\mathcal{R}_{2}}    y\right) \right)^{\frac{1}{2}}}_{:=F_{921}}\underbrace{\left(\displaystyle{\int_{\mathbb{R}^N}}U^{2}_{\rho,\epsilon}(y)U^{2}_{\rho,\epsilon}\left(\widehat{\mathcal{R}_{2}}    y\right) \right)^{\frac{1}{2}}}_{:=F_{922}},
\end{align*}
then using similar arguments as in the estimates of $M_{61}$-$M_{63}$, we have
\begin{align*}
    F_{921}\lesssim&\left(\displaystyle\int_{\R^N}4\beta^{2}\Phi_{\epsilon}^{2}(0)Y^{2}(0)\left(\sum_{j=1}^{k}\Psi^{2}\left(y-\frac{P_{j2}}{\epsilon}\right)\right)\,\left(\sum_{h=1}^{k}U_{\mu_{2},P_{\epsilon,h}}^{2}\left(y\right)\right)dy\right)^{\frac{1}{2}}\\
    \lesssim&\|\Phi_{\epsilon}\|_{L^{\infty}(\R^N)}\left(\displaystyle\int_{\R^N}\Psi^{2}\left(y-\frac{P_{12}}{\epsilon}\right)\,U_{\mu_{2}}^{2}\left(y-\frac{P_{1}}{\epsilon}\right)dy\right)^{\frac{1}{2}}\\
    \lesssim&\epsilon^{\frac{N}{2}}\left(\displaystyle\int_{\R^N}\Psi^{2}\left(y\right)\,U_{\mu_{2}}^{2}\left(y-\frac{P_1-P_{12}}{\epsilon}\right)dy\right)^{\frac{1}{2}}\\
    =&\epsilon^{\frac{N}{2}}\left(\displaystyle\int_{B_{\frac{|P_{12}-P_1|}{2\epsilon}}}\Psi^{2}\left(y\right)\,U_{\mu_{2}}^{2}\left(y-\frac{P_1-P_{12}}{\epsilon}\right)+\displaystyle\int_{B^{c}_{\frac{|P_{12}-P_1|}{2\epsilon}}}\Psi^{2}\left(y\right)\,U_{\mu_{2}}^{2}\left(y-\frac{P_1-P_{12}}{\epsilon}\right)\right)^{\frac{1}{2}}\\
    \lesssim&\epsilon^{\frac{N}{2}}\left(e^{-\sqrt{\omega_0}\,\frac{|P_{12}-P_1|}{\epsilon}}\displaystyle\int_{\R^N}\Psi^{2}+e^{-\gamma\,\frac{|P_{12}-P_1|}{\epsilon}}\displaystyle\int_{\R^N}U_{\mu_{2}}^{2}\right)^{\frac{1}{2}}\\\lesssim&\epsilon^{\frac{N}{2}}e^{-\frac{\gamma}{2}\,\frac{|P_{12}-P_1|}{\epsilon}},
\end{align*}
and
\begin{align*}
    F_{922}\lesssim&\left(\displaystyle\int_{\R^N}U^{2}_{\mu_{2},P_{\epsilon,1}}(y)\,U_{\mu_{2},P_{\epsilon,12}}^{2}\left( y\right)dy\right)^{\frac{1}{2}}\\
    \lesssim&\begin{cases}
        e^{-{\sqrt{\omega_0}}\,\frac{|P_{12}-P_1|}{\epsilon}}\left(\frac{\rho}{\epsilon}\right)^{-\frac{1}{4}},\quad &N=2,\\
         e^{-{\sqrt{\omega_0}}\,\frac{|P_{12}-P_1|}{\epsilon}}\left(\frac{\rho}{\epsilon}\right)^{-1}\left(\ln{\frac{\rho}{\epsilon}}\right)^{\frac{1}{2}},\quad &N=3.\\
    \end{cases}
\end{align*}
Thus,
\begin{align*}
    {|F_{92}|}   \lesssim&\begin{cases}
       \epsilon\,e^{-2\left(\frac{\gamma}{2}+\sqrt{\omega_0}\right)\frac{\rho}{\epsilon}\sin{\frac{\pi}{mk}}} \left(\frac{\rho}{\epsilon}\right)^{-\frac{1}{4}},\quad &N=2,\\
\epsilon^{\frac{3}{2}}\,e^{-2\left(\frac{\gamma}{2}+\sqrt{\omega_0}\right)\frac{\rho}{\epsilon}\sin{\frac{\pi}{mk}}}\left(\frac{\rho}{\epsilon}\right)^{-1}\left(\ln{\frac{\rho}{\epsilon}}\right)^{\frac{1}{2}},\quad &N=3.\\
    \end{cases}\\
    =&\begin{cases}
      o\left(e^{^{-4\sqrt{\omega_0}\frac{\rho}{\epsilon}\sin{\frac{\pi}{mk}}}}\left(\frac{\rho}{\epsilon}\right)^{-\frac{1}{2}}\right),\quad N=2,\\
      o\left(e^{^{-4\sqrt{\omega_0}\frac{\rho}{\epsilon}\sin{\frac{\pi}{mk}}}}\left(\frac{\rho}{\epsilon}\right)^{-{2}}\ln{\left(\frac{\rho}{\epsilon}\right)}\right),\quad N=3.
  \end{cases}
\end{align*}
Similarly,
\begin{align*}
    |F_{93}|   \lesssim&\displaystyle{\int_{\mathbb{R}^N}}U_{\rho,\epsilon}(y)\Psi_{\epsilon}\left(y\right)U^{2}_{\rho,\epsilon}\left(\widehat{\mathcal{R}_{2}}    y\right) dy\\\lesssim&\left(\displaystyle{\int_{\mathbb{R}^N}}U^{2}_{\rho,\epsilon}\left(\widehat{\mathcal{R}_{2}}    y\right)\Psi^{2}_{\epsilon}\left( y\right) \right)^{\frac{1}{2}}\left(\displaystyle{\int_{\mathbb{R}^N}}U^{2}_{\rho,\epsilon}(y)U^{2}_{\rho,\epsilon}\left(\widehat{\mathcal{R}_{2}}    y\right) \right)^{\frac{1}{2}}\\=&\begin{cases}
      o\left(e^{^{-4\sqrt{\omega_0}\frac{\rho}{\epsilon}\sin{\frac{\pi}{mk}}}}\left(\frac{\rho}{\epsilon}\right)^{-\frac{1}{2}}\right),\quad N=2,\\
      o\left(e^{^{-4\sqrt{\omega_0}\frac{\rho}{\epsilon}\sin{\frac{\pi}{mk}}}}\left(\frac{\rho}{\epsilon}\right)^{-{2}}\ln{\left(\frac{\rho}{\epsilon}\right)}\right),\quad N=3.
  \end{cases}
\end{align*}
Thus, from the estimates of $F_{91}$-$F_{93}$, we derive that
\begin{equation}\label{nine}
    {F_{9}}
    =\begin{cases}
-\alpha\,B_2\,e^{-4\sqrt{\omega_{0}}\frac{\rho}{\epsilon}\sin{\frac{\pi}{mk}}}\left(\frac{\rho}{\epsilon}\right)^{-\frac{1}{2}}\left(1+o(1)\right),\quad N=2,\\
-\alpha B_2\,e^{-4\sqrt{\omega_{0}}\frac{\rho}{\epsilon}\sin{\frac{\pi}{mk}}}\left(\frac{\rho}{\epsilon}\right)^{-{2}}\ln{\left(\frac{\rho}{\epsilon}\right)}\left(1+o(1)\right),\quad N=3,
        \end{cases}
\end{equation}
for some positive constant $B_2$.

{Next, we calculate the term  $F_3$}. A direct computation yields that
\begin{align*}
    F_3=&\underbrace{6\mu_2 \sum\limits_{i,j=1}^k\sum\limits_{i>j}\displaystyle{\int_{\mathbb{R}^N}}\left(U^{{2}}_{\mu_{2},P_{\epsilon,j}}U_{\mu_{2},P_{\epsilon,i}}\right) Z_{{\epsilon,j}}dy}_{:=F_{31}}\\
    &+\underbrace{6\mu_2 \sum\limits_{i,j,l=1}^k\sum\limits_{i>j}\sum\limits_{l\neq j}\displaystyle{\int_{\mathbb{R}^N}}\left(U^{{2}}_{\mu_{2},P_{\epsilon,j}}U_{\mu_{2},P_{\epsilon,i}}\right) Z_{{\epsilon,l}} dy}_{:=F_{32}}\\
    &+\underbrace{O\left(\sum\limits_{i,j,l=1}^k\sum\limits_{i\neq j\neq l}\displaystyle{\int_{\mathbb{R}^N}}\left(U_{\mu_{2},P_{\epsilon,j}}U_{\mu_{2},P_{\epsilon,i}}U_{\mu_{2},P_{\epsilon,l}}\right) Z_\epsilon dy\right)}_{:=F_{33}}.
    \end{align*}
As for $F_{31}$, {when $\alpha=0$ $(m=1)$}, we use Lemma \ref{derivative} to deduce that
\begin{align*}
    F_{31}=&6k\mu_2 \sum\limits_{i=2}^k\displaystyle{\int_{\mathbb{R}^N}}U^{{2}}_{\mu_{2},P_{\epsilon,1}}\,U_{\mu_{2},P_{\epsilon,i}}\, Z_{{\epsilon,1}} dy\\
    =&6k\mu_2 \sum\limits_{i=2}^k\displaystyle{\int_{\mathbb{R}^N}}U^{{2}}_{\mu_{2}}(x)\,U_{\mu_{2}}\left(x-\frac{P_{i}-P_{1}}{\epsilon}\right)\, \left(-\frac{\partial}{\partial x_1}U_{\mu_{2}}(x)\right) dx\\
    =&-2k\mu_2 \sum\limits_{i=2}^k\displaystyle{\int_{\mathbb{R}^N}}\,U_{\mu_{2}}\left(x-\frac{P_{i}-P_{1}}{\epsilon}\right)\, \left(\frac{\partial}{\partial x_1}U^{3}_{\mu_{2}}(x)\right) dx\\
    =&-B_{1}e^{-2\sqrt{\omega_{0}}\frac{\rho}{\epsilon}\sin{\frac{\pi}{k}}}\left(\frac{\rho}{\epsilon}\right)^{-\frac{N-1}{2}}\left(1+o(1)\right),
\end{align*}
for some positive constant $B_1$. Moreover, using Lemma \ref{lma2.2}, we infer that
\begin{align*}
    |F_{32}|\lesssim&\sum\limits_{i=2}^k\displaystyle{\int_{\mathbb{R}^N}}U^{{2}}_{\mu_{2},P_{\epsilon,1}}U^{2}_{\mu_{2},P_{\epsilon,i}} dy=\sum\limits_{i=2}^k\displaystyle{\int_{\mathbb{R}^N}}U^{{2}}_{\mu_{2}}(x)\,U^{2}_{\mu_{2}}\left(x-\frac{P_{i}-P_{1}}{\epsilon}\right)dx\\
    \lesssim&
    \begin{cases}
e^{-4\sqrt{\omega_{0}}\frac{\rho}{\epsilon}\sin{\frac{\pi}{k}}}\left(\frac{\rho}{\epsilon}\right)^{-\frac{1}{2}},\quad&\text{$N=2$,}\\
e^{-4\sqrt{\omega_{0}}\frac{\rho}{\epsilon}\sin{\frac{\pi}{k}}}\left(\frac{\rho}{\epsilon}\right)^{-2}\ln{\left|\frac{\rho}{\epsilon}\right|},\quad&\text{$N=3$}\end{cases}
\\=&o\left(e^{-2\sqrt{\omega_{0}}\frac{\rho}{\epsilon}\sin{\frac{\pi}{k}}}\left(\frac{\rho}{\epsilon}\right)^{-\frac{N-1}{2}}\right),
\end{align*}
similarly, we have
$$|F_{33}|=o\left(e^{-2\sqrt{\omega_{0}}\frac{\rho}{\epsilon}\sin{\frac{\pi}{k}}}\left(\frac{\rho}{\epsilon}\right)^{-\frac{N-1}{2}}\right).$$
Therefore, we conclude that if $\alpha=0$,
$$
F_{3}=-B_{1}e^{-2\sqrt{\omega_{0}}\frac{\rho}{\epsilon}\sin{\frac{\pi}{k}}}\left(\frac{\rho}{\epsilon}\right)^{-\frac{N-1}{2}}\left(1+o(1)\right),
$$
for some positive constant $B_{1}$.
We observe that if $\alpha=0$, $F_3$ is the leading term of $\displaystyle{\int_{\mathbb{R}^N}} \mathcal{E}_2 Z_{\epsilon} dy$.
However, when $\alpha\neq0$ $(m\geq 2)$, it is negligible with respect to  $F_9$ obtained in \eqref{nine}, which turns out to be the leading term.
Indeed,
\begin{align*}
   \left|F_{3}\right|=& \left|B_{1}\right|e^{-2\sqrt{\omega_{0}}\frac{\rho}{\epsilon}\sin{\frac{\pi}{k}}}\left(\frac{\rho}{\epsilon}\right)^{-\frac{N-1}{2}}\left(1+o(1)\right)\\
  {=}&\begin{cases}
      o\left(e^{^{-4\sqrt{\omega_0}\frac{\rho}{\epsilon}\sin{\frac{\pi}{mk}}}}\left(\frac{\rho}{\epsilon}\right)^{-\frac{1}{2}}\right),\quad N=2,\\
      o\left(e^{^{-4\sqrt{\omega_0}\frac{\rho}{\epsilon}\sin{\frac{\pi}{mk}}}}\left(\frac{\rho}{\epsilon}\right)^{-{2}}\ln{\left(\frac{\rho}{\epsilon}\right)}\right),\quad N=3,
  \end{cases}
\end{align*}
because
$
\sin{\frac{\pi}{k}}>2\sin{\frac{\pi}{mk}} \ \hbox{if}\ m\ge2.
$
Now, a simple computation yields that
\begin{align*}
|F_4|\lesssim&\underbrace{\sum\limits_{i=2}^k\displaystyle{\int_{\mathbb{R}^N}}U^{{2}}_{\mu_{2},P_{\epsilon,1}}U_{\mu_{2},P_{\epsilon,i}}\,\Psi_{\epsilon}\, dy}_{:=F_{41}}+\underbrace{\sum\limits_{i=2}^k\displaystyle{\int_{\mathbb{R}^N}}U_{\mu_{2},P_{\epsilon,1}}U^{{2}}_{\mu_{2},P_{\epsilon,i}}\,\Psi_{\epsilon}\, dy}_{:=F_{42}}\\&+\underbrace{O\left(k\sum\limits_{l,i=2}^k\sum_{l\neq i}\displaystyle{\int_{\mathbb{R}^N}}U_{\mu_{2},P_{\epsilon,1}}U_{\mu_{2},P_{\epsilon,i}}\,U_{\mu_{2},P_{\epsilon,l}}\,\Psi_{\epsilon}\,dy\right)}_{:=F_{43}}.
\end{align*}
If {$\alpha=0$}, by the estimate of $\Psi_\epsilon$ stated in \eqref{es-4} and Lemma \ref{lma2.2}, we have
\begin{align*}
    |F_{41}|\lesssim&\|\Psi_\epsilon\|_{L^{2}(\R^N)}\sum\limits_{i=2}^k\left(\displaystyle{\int_{\R^N}}U^{4}_{\mu_{2},P_{\epsilon,1}}(y)\,U^{2}_{\mu_{2},P_{\epsilon,i}}(y)\,dy\right)^{\frac{1}{2}}\\\lesssim&
    \epsilon^{\frac{N}{2}}\,e^{-2\sqrt{\omega_{0}}\frac{\rho}{\epsilon}\sin{\frac{\pi}{k}}}\left(\frac{\rho}{\epsilon}\right)^{-\frac{N-1}{2}}=o\left(e^{-2\sqrt{\omega_{0}}\frac{\rho}{\epsilon}\sin{\frac{\pi}{k}}}\left(\frac{\rho}{\epsilon}\right)^{-\frac{N-1}{2}}\right),
\end{align*}
similarly, we get that
\begin{align*}
    |F_{42}|=o\left(e^{-2\sqrt{\omega_{0}}\frac{\rho}{\epsilon}\sin{\frac{\pi}{k}}}\left(\frac{\rho}{\epsilon}\right)^{-\frac{N-1}{2}}\right),\quad |F_{43}|=o\left(e^{-2\sqrt{\omega_{0}}\frac{\rho}{\epsilon}\sin{\frac{\pi}{k}}}\left(\frac{\rho}{\epsilon}\right)^{-\frac{N-1}{2}}\right),
\end{align*}
which implies that
$$|F_{4}|=o\left(e^{-2\sqrt{\omega_{0}}\frac{\rho}{\epsilon}\sin{\frac{\pi}{k}}}\left(\frac{\rho}{\epsilon}\right)^{-\frac{N-1}{2}}\right).$$
If {$\alpha\neq0$}, combining the above estimates with \eqref{bala2}, we have
\begin{align*}
|F_4|=\begin{cases}
      o\left(e^{^{-4\sqrt{\omega_0}\frac{\rho}{\epsilon}\sin{\frac{\pi}{mk}}}}\left(\frac{\rho}{\epsilon}\right)^{-\frac{1}{2}}\right),\quad N=2,\\
      o\left(e^{^{-4\sqrt{\omega_0}\frac{\rho}{\epsilon}\sin{\frac{\pi}{mk}}}}\left(\frac{\rho}{\epsilon}\right)^{-{2}}\ln{\left(\frac{\rho}{\epsilon}\right)}\right),\quad N=3.
  \end{cases}
\end{align*}

Thus, {from all the estimates of $F_{1}$-$F_{9}$}, we get \eqref{c-alpha} and \eqref{conclusion}, respectively.
\\

{It remains to estimate the higher order terms of the R.H.S of \eqref{ceps}}. By the estimates of $(\phi_\epsilon, \psi_\epsilon)$ obtained in Lemma \ref{contraction}, we deduce that either $\alpha=0$ or $\alpha\neq0$,
\begin{align}\label{N2}
\displaystyle{\int_{\mathbb{R}^N} }\mathcal{N}_2(\phi_\epsilon, \psi_\epsilon) Z_{\epsilon} dy \lesssim o\left(\epsilon\rho\right).
\end{align}

Moreover, from \eqref{z-ep} and \eqref{linear-op}, we get that
\begin{align}\label{L-2}
&\displaystyle{\int_{\mathbb{R}^N}} \mathcal{L}_2(\phi_\epsilon, \psi_\epsilon) Z_{\epsilon}dy\nonumber\\
=& \underbrace{\displaystyle{\int_{\mathbb{R}^N}}\Big(\omega(\epsilon y)-\omega_0\Big) Z_{\epsilon} \psi_\epsilon \,dy}_{:=P_1}-3 \mu_2 \underbrace{\displaystyle{\int_{\mathbb{R}^N}} \left(\sum\limits_{i,j=1}^k\sum_{i\neq j} U_{\mu_{2},P_{\epsilon,j}}^{2}(y)Z_{{\epsilon,i}}(y)\right)  \psi_\epsilon
dy}_{:=P_2}\nonumber\\& -3 \mu_2 \underbrace{\displaystyle{\int_{\mathbb{R}^N}} \left(\sum\limits_{i,j=1}^k\sum_{i\neq j} U_{\mu_{2},P_{\epsilon,j}}(y)U_{\mu_{2},P_{\epsilon,i}}(y)\right)  Z_{\epsilon}\psi_\epsilon dy}_{:=P_3}\underbrace{-3 \mu_2 \beta^2 \displaystyle{\int_{\mathbb{R}^N}} \Psi_{\epsilon}^2 Z_{\varepsilon} \psi_\epsilon dy}_{:=P_4}\nonumber\\&\underbrace{-6 \mu_2 \beta \displaystyle{\int_{\mathbb{R}^N}}\left(\sum\limits_{j=1}^kU_{\mu_{2},P_{\epsilon,j}}(y)\right)\Psi_{\epsilon}(y) Z_{\epsilon}(y) \psi_\epsilon (y)dy}_{{:=P_5}}\underbrace{-\beta^3 \displaystyle{\int_{\mathbb{R}^N}} \Phi_{\epsilon}^2(\epsilon y) Z_{\epsilon} \psi_\epsilon dy}_{{:=P_6}}\nonumber\\&\underbrace{-2 \beta^2 \displaystyle{\int_{\mathbb{R}^N}} Y(\epsilon y) \Phi_{\epsilon}(\epsilon y) Z_{\epsilon} (y)\psi_\epsilon (y)dy}_{{:=P_7}}\underbrace{-2 \beta^2 \displaystyle{\int_{\mathbb{R}^N}} \Psi_{\epsilon}(y) Y(\epsilon y) \phi_\epsilon(\epsilon y) Z_{\epsilon}(y) dy}_{:=P_{8}}\nonumber \\
&\underbrace{-2 \beta^3 \displaystyle{\int_{\mathbb{R}^N}}\Psi_{\epsilon} \Phi_{\epsilon}(\epsilon y) \phi_{\epsilon}(\epsilon y) Z_{\epsilon} dy}_{:=P_{9}}\underbrace{-2 \beta \displaystyle{\int_{\mathbb{R}^N}}U_{\rho,\epsilon} Y(\epsilon y) \phi_\epsilon
(\epsilon y) Z_{\epsilon} dy}_{:=P_{10}}\nonumber\\
&\underbrace{-2 \beta^2 \displaystyle{\int_{\mathbb{R}^N}}U_{\rho,\epsilon} \Phi_{\epsilon}(\epsilon y) \phi_{\epsilon}(\epsilon y) Z_{\epsilon} dy}_{:=P_{11}}\nonumber\\
&{\underbrace{-2\alpha\displaystyle{\int_{\mathbb{R}^N}}\Theta_{\epsilon}\left(\sum_{i=2}^{m}\Theta_{\epsilon}\left(\widehat{\mathcal{R}}_{i}   {y}{}\right)  \psi_\epsilon\left(\widehat{\mathcal{R}}_{i}   {y}{}\right)\right)Z_{\epsilon} dy}_{:=P_{12}}\underbrace{-\alpha\displaystyle\int_{\R^N}\psi_\epsilon\left(\sum_{i=2}^{m}\Theta^{2}_{\epsilon}\left(\widehat{\mathcal{R}}_{i}   {y}{}\right) \right)Z_{\epsilon} dy}_{:=P_{13}}},
\end{align}
{we will estimate each term in the right hand side of \eqref{L-2} to  obtain that
\begin{align}
\displaystyle\int_{\mathbb{R}^N} \mathcal{L}_2(\phi_\epsilon, \psi_\epsilon) Z_{\epsilon} dy\lesssim&\,\, o\left(\epsilon\rho\right),\label{hope}
    \end{align}
provided that $\alpha=0$ and $\alpha\neq 0$, respectively}. Indeed, {by the estimates of $(\phi_\epsilon, \psi_\epsilon)$ obtained in Lemma \ref{contraction}},
\begin{align*}
    |P_1|&\lesssim\rho^{2}{\|\psi_\epsilon\|_{L^{2}(\R^N)}}\lesssim\rho^{2}{\|(\phi_\epsilon, \psi_\epsilon)\|}=o\left(\epsilon\rho\right).
\end{align*}
From \eqref{bala1} and \eqref{bala2}, we derive that both $\alpha=0$ and $\alpha\neq 0$
\begin{align*}
    |P_2|&\lesssim{\|(\phi_\epsilon, \psi_\epsilon)\|}\,e^{-2\sqrt{\omega_{0}}\frac{\rho}{\epsilon}\sin{\frac{\pi}{k}}}\left(\frac{\rho}{\epsilon}\right)^{-\frac{N-1}{2}}\lesssim o\left(\epsilon\rho\right).
\end{align*}
Similarly,
\begin{align*}
    |P_3|\lesssim o\left(\epsilon\rho\right).
\end{align*}
Moreover, by the estimates of correction terms $\Phi_\epsilon$ and $\Psi_\epsilon
    $ obtained in Lemma \ref{Phi}, a simple computation yields that
\begin{align*}
|P_4|&\lesssim\|\psi_\epsilon\|_{L^{2}(\R^N)}\|\Psi_\epsilon\|^{2}_{L^{\infty}(\R^N)}\lesssim \epsilon^N{\|(\phi_\epsilon, \psi_\epsilon)\|}=o\left(\epsilon\rho\right),\\
{{|P_5|}}  &\lesssim \|\psi_\epsilon\|_{L^{2}(\R^N)}{\|\Psi_\epsilon\|_{L^{\infty}(\R^N)}}\lesssim {\epsilon^\frac{N}{2}}{\|(\phi_\epsilon, \psi_\epsilon)\|}=o\left(\epsilon\rho\right),\\
 |P_6|&\lesssim\|\psi_\epsilon\|_{L^{2}(\R^N)}\|\Phi_\epsilon\|^{2}_{L^{\infty}(\R^N)}\lesssim \epsilon^N{\|(\phi_\epsilon, \psi_\epsilon)\|}=o\left(\epsilon\rho\right),\\
{ {|P_7|}}  &\lesssim \|\psi_\epsilon\|_{L^{2}(\R^N)}{\|\Phi_\epsilon\|_{L^{\infty}(\R^N)}}\lesssim {\epsilon^\frac{N}{2}}{\|(\phi_\epsilon, \psi_\epsilon)\|}=o\left(\epsilon\rho\right),\\
{{|P_8|}}  &\lesssim \|\phi_\epsilon\|_{L^{\infty}(\R^N)}{\|\Psi_\epsilon\|_{H^{1}(\R^N)}}\lesssim {\epsilon^\frac{N}{2}}{\|(\phi_\epsilon, \psi_\epsilon)\|}=o\left(\epsilon\rho\right),\\|P_9|&\lesssim\|\phi_\epsilon\|_{L^{\infty}(\R^N)}\|\Phi_\epsilon\|_{L^{\infty}(\R^N)}\|\Psi_\epsilon\|_{H^{1}(\R^N)}\lesssim \epsilon^N{\|(\phi_\epsilon, \psi_\epsilon)\|}=o\left(\epsilon\rho\right).
\end{align*}

{As for ${P_{10}}$, there are some differences whether $\alpha$ equals zero or not}. Indeed, {if $\alpha=0$}, we have
\begin{align*}
P_{10}=&\underbrace{\displaystyle{\int_{\R^N}}U_{\rho,\epsilon}\, Y(\epsilon y) \Big(\phi_\epsilon
(\epsilon y)-\phi_\epsilon
(0) \Big)Z_{\epsilon} dy}_{:=D_1}\\
&+\underbrace{\displaystyle{\int_{\R^N}}U_{\rho,\epsilon}\,\phi_\epsilon
(0)  \Big(Y
(\epsilon y)-Y
(0) \Big)Z_{\epsilon} dy}_{:=D_2}+\underbrace{\displaystyle{\int_{\R^N}}U_{\rho,\epsilon}\,Y(0)\,\phi_\epsilon(0)Z_{\epsilon} dy}_{:=D_3}.
\end{align*}
Then from \eqref{al-zero} and the Sobolev embedding $H^{2}\left(\R^N\right)\hookrightarrow C^{0,\frac{1}{2}}\left(\R^N\right)$, $N=2,3$, we have
\begin{align*}
|D_1|
    &\lesssim\epsilon^{\frac{5}{2}}|\ln \epsilon|^2\,\|Z_\epsilon\|_{L^{2}(\R^N)}\left(\displaystyle{\int_{\R^N}}|y|\left(\sum\limits_{j=1}^kU^{{2}}_{\mu_{2},P_{\epsilon,j}}(y)\right)dy\right)^{\frac{1}{2}}\\&\lesssim\epsilon^{\frac{5}{2}}|\ln \epsilon|^2\left(\frac{\rho}{\epsilon}\right)^{\frac{1}{2}}=o\left(\epsilon\rho\right).
\end{align*}
Moreover, since $y=0$ is the critical point of $Y$, we derive that
\begin{align*}
|D_2|&\lesssim\epsilon^{{2}}\,\|\phi_\epsilon\|_{L^{\infty}(\R^N)}\,\|Z_\epsilon\|_{L^{2}(\R^N)}\left(\displaystyle{\int_{\R^3}}|y|^{4}\left(\sum\limits_{j=1}^kU^{{2}}_{\mu_{2},P_{\epsilon,j}}(y)\right)dy\right)^{\frac{1}{2}}\\&\lesssim\epsilon^{{4}}|\ln \epsilon|^2\left(\frac{\rho}{\epsilon}\right)^{{2}}=o\left(\epsilon\rho\right).
\end{align*}
In addition, by \eqref{z-1}, a simple computation yields that
\begin{align*}
    D_3&=Y(0)\,\phi_\epsilon(0)\displaystyle{\int_{\R^N}}\left(\sum\limits_{i,j=1}^k\sum_{i\neq j} U_{\mu_{2},P_{\epsilon,j}}(y)Z_{{\epsilon,i}}(y)\right)  dy\\
    &\lesssim\|Y\|_{L^{\infty}(\R^N)}\,\|\phi_\epsilon\|_{L^{\infty}(\R^N)}\left(\sum\limits_{i=2}^k\displaystyle{\int_{\R^N}}U_{\mu_{2},P_{\epsilon,1}}(y)\,U_{\mu_{2},P_{\epsilon,i}}(y)  dy\right)\\ &\lesssim\epsilon^{{2}}|\ln \epsilon|^2\,e^{-2\sqrt{\omega_{0}}\frac{\rho}{\epsilon}\sin{\frac{\pi}{k}}}\left(\frac{\rho}{\epsilon}\right)^{\frac{3-N}{2}}=o\left(\epsilon\rho\right).
\end{align*}
Thus, from the estimates of $D_1$-$D_3$, we conclude that when $\alpha=0$,
$$|P_{10}|=o\left(\epsilon\rho\right).$$
{However, when $\alpha\neq0$, we can not use the above arguments directly, and we need to improve the regularity of $\phi_\epsilon$, see Lemma \ref{imp} in detail}. Indeed, a direct computation yields that
\begin{align*}
    P_{10}=&\underbrace{\displaystyle{\int_{\mathbb{R}^N}} \left(\sum\limits_{j=1}^k U_{\mu_{2},P_{\epsilon,j}}(y)Z_{{\epsilon,j}}(y)\right)  \phi_\epsilon(\epsilon y)Y(\epsilon y)dy}_{:=D_4}\\&+\underbrace{\displaystyle{\int_{\mathbb{R}^N}} \left(\sum\limits_{i,j=1}^k \sum_{i\neq j}U_{\mu_{2},P_{\epsilon,j}}(y)Z_{{\epsilon,i}}(y)\right)  \phi_\epsilon(\epsilon y)Y(\epsilon y)
dy}_{:=D_5}.
\end{align*}
Firstly, using Lemma \ref{imp}, Lemma \ref{contraction} and the facts that
$\nabla Y(0)=0$
and $$
       \left|Y(y)-Y(0)\right|\lesssim\left\|Y\right\|_{C^{2}(\R^N)}|y|^{2}\lesssim|y|^{2}
,$$ a direct computation yields that
\begin{align*}
    D_4=&k\displaystyle{\int_{\mathbb{R}^N}} U_{\mu_{2},P_{\epsilon,1}}(y)Z_{{\epsilon,1}}(y) \phi_\epsilon(\epsilon y)Y(\epsilon y)dy\\=&\frac{k}{2}\displaystyle{\int_{\mathbb{R}^N}}\left(-\frac{\partial}{\partial y_{1}}U^{2}_{\mu_{2}}\left(y-\frac{P_{1}}{\epsilon}\right)\right)\phi_\epsilon(\epsilon y)Y(\epsilon y)dy\\
    =&\frac{k\epsilon}{2}\displaystyle{\int_{\mathbb{R}^N}}\left[\frac{\partial}{\partial y_{1}}\phi_\epsilon(\epsilon y)-\frac{\partial}{\partial y_{1}}\phi_\epsilon(0)\right]Y(\epsilon y)\,U^{2}_{\mu_{2}}\left(y-\frac{P_{1}}{\epsilon}\right)dy\\
    &+\frac{k\epsilon}{2}\displaystyle{\int_{\mathbb{R}^N}}\left[\frac{\partial}{\partial y_{1}}Y(\epsilon y)-\frac{\partial}{\partial y_{1}}Y(0)\right]\phi_\epsilon(\epsilon y)\,U^{2}_{\mu_{2}}\left(y-\frac{P_{1}}{\epsilon}\right)dy\\
    \lesssim&\epsilon\|Y\|_{L^{\infty}(\R^N)}\left\|\phi_\epsilon\right\|_{C^{1,1-\frac{N}{q}}(\R^N)}\displaystyle{\int_{\mathbb{R}^N}}\left|\epsilon y\right|^{1-\frac{N}{q}}\,U^{2}_{\mu_{2}}\left(y-\frac{P_{1}}{\epsilon}\right)dy\\
    &+\epsilon\|\phi_\epsilon\|_{L^{\infty}(\R^N)}\left\|Y\right\|_{C^{2}(\R^N)}\displaystyle{\int_{\mathbb{R}^N}}\left|\epsilon y\right|\,U^{2}_{\mu_{2}}\left(y-\frac{P_{1}}{\epsilon}\right)dy\\\lesssim&\epsilon\rho^{1-\frac{N}{q}}\left\|\phi_\epsilon\right\|_{C^{1,1-\frac{N}{q}}(\R^N)}+\epsilon\rho\left\|\phi_\epsilon\right\|_{V}
    \\
   \lesssim& \begin{cases}
     \epsilon^{1+\frac{2N}{q}}\rho^{1-\frac{N}{q}} ,\quad &q>2N,\nonumber\\
       \epsilon\rho^{1-\frac{N}{q}}\left(e^{-2\sqrt{\omega_{0}}\frac{\rho}{\epsilon}\sin{\frac{\pi}{mk}}}\left(\frac{\rho}{\epsilon}\right)^{-\frac{N-1}{2}}\right) ,\quad &q\in(N,2N).
    \end{cases}\\    =&o\left(\epsilon\rho\right).
\end{align*}
Moreover,
\begin{align*}
   \left|D_5\right|\lesssim \|\phi_\epsilon\|_{L^{\infty}(\R^N)}e^{-2\sqrt{\omega_{0}}\frac{\rho}{\epsilon}\sin{\frac{\pi}{k}}}\left(\frac{\rho}{\epsilon}\right)^{-\frac{N-1}{2}}\lesssim e^{-4\sqrt{\omega_{0}}\frac{\rho}{\epsilon}\sin{\frac{\pi}{mk}}}\left(\frac{\rho}{\epsilon}\right)^{-{(N-1)}}=o\left(\epsilon\rho\right).
\end{align*}
Thus, from the estimates of $D_4$ and $D_5$, we derive that if $\alpha\neq0$,
\begin{align*}
    |P_{10}|=o\left(\epsilon\rho\right).
\end{align*}

In addition, {using similar arguments of $P_{10}$ to calculate $P_{11}$, we obtain that either $\alpha=0$ or $\alpha\neq0$},
\begin{align*}
|P_{11}|=o\left(\epsilon\rho\right).
\end{align*}

{Now, we estimate the last two terms in \eqref{L-2}, which vanish if $\alpha=0$}. As for $P_{12}$, taking into account \eqref{bala2}, we derive that if $\alpha\neq0$
\begin{align*}
    |P_{12}|&\lesssim\|\psi_\epsilon\|_{L^{\infty}(\R^N)}\\
&\cdot\left(\displaystyle\int_{\R^N}U_{\rho,\epsilon}^{2}\,U_{\rho,\epsilon}^{2}\left(\widehat{\mathcal{R}_{2}}    y\right)+U_{\rho,\epsilon}^{2}\,U_{\rho,\epsilon}\left(\widehat{\mathcal{R}_{2}}    y\right)\,\Psi_{\epsilon}\left(\widehat{\mathcal{R}_{2}}    y\right)+U_{\rho,\epsilon}\,\Psi_{\epsilon}\,U^{2}_{\rho,\epsilon}\left(\widehat{\mathcal{R}_{2}}    y\right)\right)^{\frac{1}{2}}\\
&\lesssim e^{-2\sqrt{\omega_{0}}\frac{\rho}{\epsilon}\sin{\frac{\pi}{mk}}}\left(\frac{\rho}{\epsilon}\right)^{-\frac{N-1}{2}}\\&\cdot\left(\displaystyle\int_{\R^N}U_{{\mu_{2}},P_{\epsilon,1}}^{2}\,U_{{\mu_{2}},P_{\epsilon,12}}^{2}+\|\Psi_\epsilon\|_{L^{\infty}(\R^N)}\displaystyle\int_{\R^N}\left(U_{{\mu_{2}},P_{\epsilon,1}}^{2}\,U_{\mu_{2},P_{\epsilon,12}}+U_{\mu_{2},P_{\epsilon,1}}\,U^{2}_{{\mu_{2}},P_{\epsilon,12}}\right)\right)^{\frac{1}{2}}\\
&\lesssim \begin{cases}
   e^{-2\sqrt{\omega_{0}}\frac{\rho}{\epsilon}\sin{\frac{\pi}{mk}}}\left(\frac{\rho}{\epsilon}\right)^{-\frac{N-1}{2}}\left(e^{-2\sqrt{\omega_0}\frac{|P_{1}-P_{12}|}{\epsilon}}\left(\frac{|P_{1}-P_{12}|}{\epsilon}\right)^{-\frac{1}{2}}\right)^{\frac{1}{2}},\quad&N=2,\\
    e^{-2\sqrt{\omega_{0}}\frac{\rho}{\epsilon}\sin{\frac{\pi}{mk}}}\left(\frac{\rho}{\epsilon}\right)^{-\frac{N-1}{2}}\left(e^{-2\sqrt{\omega_0}\frac{|P_{1}-P_{12}|}{\epsilon}}\left(\frac{|P_{1}-P_{12}|}{\epsilon}\right)^{-{2}}\ln{\left(\frac{|P_{1}-P_{12}|}{\epsilon}\right)}\right)^{\frac{1}{2}},\quad&N=3,
\end{cases}\\
&=o\left(\epsilon\rho\right).
\end{align*}

Similarly,
\begin{align*}
     |P_{13}|\lesssim e^{-4\sqrt{\omega_{0}}\frac{\rho}{\epsilon}\sin{\frac{\pi}{mk}}}\left(\frac{\rho}{\epsilon}\right)^{-{(N-1)}}=o\left(\epsilon\rho\right).
\end{align*}

 {Thus, from all the above estimates, we conclude that \eqref{hope} holds true in both cases $\alpha=0$ and $\alpha\neq0$. Then combining with \eqref{N2}, we get \eqref{c-alpha} and \eqref{conclusion}, respectively}.

\end{proof}

\subsection{Proof of Theorems \ref{mt1} and \ref{mt}: completed}  {
The last step in the reduction procedure consists in finding $d=d_\epsilon$ in \eqref{pj} such that
the R.H.S. of \eqref{c-alpha} and \eqref{conclusion} vanish, respectively.

If $\alpha=0$ $(m=1)$, we need to find $d=d_\epsilon$ in \eqref{pj} such that
\begin{align*}
    \left(\Delta\omega(0)A\epsilon\rho_\epsilon+B_{1}e^{-2\sqrt{\omega_{0}}\frac{\rho_{\epsilon}}{\epsilon}\sin{\frac{\pi}{k}}}\left(\frac{\rho_\epsilon}{\epsilon}\right)^{-\frac{N-1}{2}}\right)\left(1+o(1)\right)=0,
\end{align*}
for some   positive constants $A$ and $B_1$. Clearly this is possible if    $\Delta \omega(0)<0$ and if we choose
$d_\epsilon=\frac{1}{\sqrt{\omega_{0}}\sin{\frac{\pi}{k}}}+o(1)$ as $\epsilon\rightarrow0$.

If $\alpha\neq0$ $(m\geq2)$}, we need to find $d=d_\epsilon$ in \eqref{pj} such that
$$    \left(  \Delta\omega(0)A\epsilon\rho_\epsilon+\alpha B_2e^{-4\sqrt{\omega_{0}}\sin{\frac{\pi}{mk}}\frac{\rho_\epsilon}{\epsilon}}\left(\frac{\rho_\epsilon}{\epsilon}\right)^{-\frac{1}{2}}\right)\left(1+o(1)\right)=0,\quad\ \hbox{if} \,\, N=2,$$
and
 $$\left( \Delta\omega(0)A\epsilon\rho_\epsilon+\alpha B_2e^{-4\sqrt{\omega_{0}}\sin{\frac{\pi}{mk}}\frac{\rho_\epsilon}{\epsilon}}\left(\frac{\rho_\epsilon}{\epsilon}\right)^{-{2}}\ln{\left(\frac{\rho_\epsilon}{\epsilon}\right)}\right)\left(1+o(1)\right)=0,\quad\ \hbox{if}\,\, N=3.$$
for some   positive constants $A$ and $B_2$.
It is immediate to check that it is possible if  $\alpha$ and $\Delta \omega(0)$ have different sign and if we  choose
$d_\epsilon=\frac{1}{2\sqrt{\omega_{0}}\sin{\frac{\pi}{mk}}}+o(1)$ as $\epsilon\rightarrow0$.
This completes the proof.

\appendix

\section{Technical Lemma}
In the appendix, we present some well-known facts and the useful estimates, which will be utilized in our proof.
% \begin{Lem}\label{lma2.1}
%     For any $y\in\Omega_1$ and $\eta\in(0,1]$, there exists a constant $C>0$ such that for $k$ large enough, we have
%     $$\sum_{j=2}^{k}U_{\mu_{2},\epsilon,x_{j}}(y)\leq C\,e^{-\eta {\frac{\pi r}{k\epsilon}}}\,e^{-(1-\eta)\frac{|y-x_1|}{\epsilon}}.$$
% \end{Lem}
% \begin{proof}
% Define
% $$\Omega_{1}:=\left\{y=(y^{\prime},y^{\prime\prime})\in \R^2\times\R^{N-2}:\left\langle\frac{y^{\prime}}{|y^{\prime}|},\frac{x_j}{|x_j|}\right\rangle\geq\cos{\frac{\pi}{k}}\right\}.$$
%     For any $y\in\Omega_1$ and $j\geq 2$, we get that $$|y-x_j|\geq \frac{1}{2}|x_j-x_1|.$$
%     Indeed, if $|y-x_1|\leq \frac{1}{2}|x_j-x_1|$, then
%     $$|y-x_j|\geq |x_j-x_1|-|y-x_1|\geq \frac{1}{2}|x_j-x_1|.$$
%     if $|y-x_1|\geq \frac{1}{2}|x_j-x_1|$, then
%     $$|y-x_j|\geq|y-x_1|\geq \frac{1}{2}|x_j-x_1|,\quad\text{$\forall y\in \Omega_1$}.$$
%     Thus, for any $ y\in \Omega_1$ and $j\geq 2$, we obtain that
%     \begin{align*}
%     U_{\mu_{2},\epsilon,x_j}(y)\leq& C e^{-\eta\frac{|y-x_j|}{\epsilon}}\,e^{-(1-\eta)\frac{|y-x_j|}{\epsilon}}\leq C e^{-\frac{\eta}{2}\frac{|x_j-x_1|}{\epsilon}}\,e^{-(1-\eta)\frac{|y-x_1|}{\epsilon}}.
%     \end{align*}
%     Hence, \begin{equation*}
%         \sum_{j=2}^{k}U_{\mu_{2},\epsilon,x_{j}}(y)\leq C\left(\sum_{j=2}^{k} e^{-\eta\, \frac{r\sin{\frac{\pi(j-1)}{k}}}{\epsilon}}\right)e^{-(1-\eta)\frac{|y-x_1|}{\epsilon}}\leq C e^{-\eta \frac{\pi r}{k\epsilon}}\,e^{-(1-\eta)\frac{|y-x_1|}{\epsilon}}.
%     \end{equation*}
% \end{proof}
\begin{Lem}\label{lma2.2} Let $u, v: \mathbb{R}^N \rightarrow \mathbb{R}$ be two positive continuous radial functions such that
$$
u(x) \sim|x|^a e^{-b|x|}, \quad v(x) \sim|x|^{a^{\prime}} e^{-b^{\prime}|x|},\quad\text{as $|x| \rightarrow +\infty$,}
$$
where $a, a^{\prime} \in \mathbb{R}$, and $b, b^{\prime}>0$. We denote $u_{\xi}(x):=u(x+\xi)$, $\xi \in \mathbb{R}^N$. Then the following asymptotic estimates hold true

$(i)$ If $b<b^{\prime}$,
$$
\int_{\mathbb{R}^N} u_{\xi} v \sim e^{-b|\xi|}|\xi|^a,\quad \text { as }|\xi| \rightarrow+\infty.
$$
A similar expression can be derived  if $b>b^{\prime}$, by replacing $a$ and $b$ with $a^{\prime}$ and $b^{\prime}$.

$(ii)$ If $b=b^{\prime}$, suppose that $a \geq a^{\prime}$. Then

$$
\int_{\mathbb{R}^N} u_{\xi} v \sim \begin{cases}e^{-b|\xi|}|\xi|^{a+a^{\prime}+\frac{N+1}{2}},\quad & \text { if } a^{\prime}>-\frac{N+1}{2}, \\ e^{-b|\xi|}|\xi|^a \log |\xi|,\quad & \text { if } a^{\prime}=-\frac{N+1}{2}, \\ e^{-b|\xi|}|\xi|^a ,\quad& \text { if } a^{\prime}<-\frac{N+1}{2} .\end{cases}
$$

\end{Lem}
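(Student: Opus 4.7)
The plan is to evaluate $\int_{\mathbb{R}^N} u_\xi(x) v(x)\,dx$ as $|\xi| \to +\infty$ by a Laplace-type analysis adapted to the exponent $b|x+\xi| + b'|x|$. By the radial symmetry of $u$ and $v$, I may assume $\xi = -R e_1$ with $R = |\xi|$, so the integrand concentrates near the two peaks $x = R e_1$ and $x = 0$. The triangle inequality $|x-Re_1|+|x|\ge R$, together with $b|x-Re_1| + b'|x| \ge \min(b,b')\bigl(|x-Re_1| + |x|\bigr)$, pins down the equality locus of the exponent, which dictates where the Laplace expansion is needed.

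For part $(i)$ with $b < b'$ (the case $b > b'$ is symmetric after swapping the roles of $u$ and $v$), the minimum of the exponent is uniquely attained at $x = 0$, so the integral concentrates in a neighborhood of the origin. On the region $|x| = o(R)$ I would use $|x - Re_1| = R - x_1 + O(|x|^2/R)$ to obtain $u(x - Re_1) \sim R^a e^{-bR} e^{b x_1}$ uniformly, and hence
\[
\int u_\xi v \,\sim\, R^a e^{-bR} \int_{\mathbb{R}^N} e^{b x_1} v(x)\,dx.
\]
The remaining integral is a finite positive constant: $e^{b x_1}v(x) \lesssim |x|^{a'} e^{-(b'-b)|x|}$, which is integrable precisely because $b < b'$; and the tail $|x| \gtrsim R$ is shown to be of lower order by a dyadic split using the exponential decay of both factors.

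For part $(ii)$ with $b = b'$, the exponent attains its minimum value $bR$ along the entire segment from $0$ to $Re_1$, so the concentration is one-dimensional. Writing $x = (t, y)$ with $y \in \mathbb{R}^{N-1}$ and $t \in (0,R)$, I would use the expansions $|x| \approx t + |y|^2/(2t)$ and $|x - Re_1| \approx (R-t) + |y|^2/(2(R-t))$ for $|y| \ll \sqrt{t(R-t)/R}$ to approximate the integrand by
\[
(R-t)^a\, t^{a'}\, e^{-bR}\, e^{-b|y|^2 R/(2t(R-t))}.
\]
Gaussian integration in $y$ produces the factor $[t(R-t)/R]^{(N-1)/2}$, and the substitution $t = sR$ reduces the problem to the Beta-type integral
\[
\int u_\xi v \,\sim\, e^{-bR}\, R^{a+a'+(N+1)/2}\int_{0}^{1}(1-s)^{a+(N-1)/2}\,s^{a'+(N-1)/2}\,ds.
\]
The trichotomy in the statement is then read off from the behavior at $s = 0$: when $a' > -(N+1)/2$ the Beta integral converges and yields the power $|\xi|^{a+a'+(N+1)/2}$; when $a' = -(N+1)/2$ it is logarithmically divergent and must be cut off at the smallest admissible scale $s \sim 1/R$, producing the $\log|\xi|$ factor with leading power $|\xi|^a$; when $a' < -(N+1)/2$ the tail asymptotic for $v$ is no longer applicable near $t = 0$ since $v$ is merely continuous and bounded there, so one instead bounds $u(x - Re_1) \sim R^a e^{-bR}$ on a bounded neighborhood of $x = 0$ and integrates the finite $\int v$, again yielding $|\xi|^a$.

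The main obstacle is making the matched asymptotics rigorous: gluing the Gaussian tube expansion along the interior of the segment to the bounded-neighborhood estimates at each peak, and controlling the error terms outside the tube $|y| \lesssim \sqrt{t(R-t)/R}$, especially in the borderline case $a' = -(N+1)/2$ where the logarithm arises. Both are handled by a dyadic decomposition exploiting the exponential decay of $u$ and $v$ and are standard in the Bahri--Coron / Bahri--Li interaction-estimate framework.
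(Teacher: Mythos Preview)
The paper does not actually prove this lemma: it is placed in the appendix under the heading ``Technical Lemma'' with the remark that these are ``well-known facts,'' and no argument is given. So there is nothing to compare against on the paper's side.

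Your sketch is the standard Laplace-method derivation and is essentially correct. In part $(i)$ the reduction to $\int_{\mathbb{R}^N} e^{b x_1} v(x)\,dx$ is right, and the integrability of this quantity is exactly where the strict inequality $b<b'$ is used. In part $(ii)$ the Gaussian-tube expansion along the segment, followed by the Beta-type integral in $s=t/R$, is the right mechanism, and your reading of the trichotomy at $s=0$ (convergent, logarithmically divergent with cutoff at $s\sim 1/R$, or dominated by the bounded-$|x|$ neighborhood where the asymptotic for $v$ fails) is accurate. One small point worth making explicit: the hypothesis $a\ge a'$ also governs the endpoint $s=1$, and you should check that the contribution there never exceeds the $s=0$ contribution; this is immediate since the analogous endpoint estimate at $x\approx R e_1$ gives $R^{a'}e^{-bR}\le R^{a}e^{-bR}$. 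With that, the argument is complete at the level of a sketch; the dyadic/tail control you allude to is indeed routine and appears in the Bahri--Li and Wei--Yan literature where this lemma originates.
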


\begin{Lem}
    \label{derivative} Let $U$ be the solution of \eqref{U-1} with $\omega_0=\mu_2=1$. Consider the following integral
$$
\Theta_{s, t}(\xi):=\int_{\mathbb{R}^N} U_{\lambda, \mu}^s(x+\xi) \partial_{x_1} U_{\lambda, \mu}^t(x) d x, \quad \xi \in \mathbb{R}^N,
$$
where $U_{\lambda, \mu}(x)=\sqrt{\frac{\lambda}{\mu}} U(\sqrt{\lambda} x)$ and $s, t \geq 1$. Then the following asymptotic estimates hold true

$(i)$ If $s<t$, then
$$
\Theta_{s, t}(\xi) \sim {cs} \frac{\xi_1}{|\xi|} e^{-s \sqrt{\lambda}|\xi|}|\xi|^{-s \frac{N-1}{2}},\quad \text { as }|\xi| \rightarrow+\infty.
$$

$(ii)$ If $s=t$, then

$$
\Theta_{s, t}(\xi) \sim \begin{cases}{cs} \frac{\xi_1}{|\xi|} e^{-s \sqrt{\lambda}|\xi|}|\xi|^{-s(N-1)+\frac{N+1}{2}},\quad & \text { if } s<\frac{N+1}{N-1}, \\ {cs} \frac{\xi_1}{|\xi|} e^{-s \sqrt{\lambda}|\xi|}|\xi|^{-s \frac{(N-1)}{2}} \ln |\xi|,\quad & \text { if } s=\frac{N+1}{N-1} ,\\ {cs} \frac{\xi_1}{|\xi|} e^{-s \sqrt{\lambda}|\xi|}|\xi|^{-s \frac{(N-1)}{2}} ,\quad& \text { if } s>\frac{N+1}{N-1},\end{cases}
$$
for some constants $c>0$. In particular, when $N=1$, the first option in $(ii)$ holds, that is,

$$
\Theta_{s, s}(\xi) \sim cs \frac{\xi_1}{|\xi|} e^{-s\sqrt{\lambda}|\xi|}|\xi|,\quad \text { as }|\xi| \rightarrow+\infty.
$$

\end{Lem}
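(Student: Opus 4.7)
The plan is to reduce the statement to Lemma \ref{lma2.2} through the observation that $\Theta_{s,t}(\xi)$ is (up to a sign) the derivative in $\xi_1$ of the radial-type overlap integral
$$
J(\xi) := \int_{\mathbb{R}^N} U^s_{\lambda,\mu}(x+\xi)\,U^t_{\lambda,\mu}(x)\,dx.
$$
Indeed, $\partial_{x_1}U^s_{\lambda,\mu}(x+\xi)=\partial_{\xi_1}U^s_{\lambda,\mu}(x+\xi)$, so integrating by parts in $x_1$ (the boundary terms vanish by the exponential decay of $U_{\lambda,\mu}$) yields
$$
\Theta_{s,t}(\xi)=-\int_{\mathbb R^N}\partial_{x_1}\bigl[U^s_{\lambda,\mu}(x+\xi)\bigr]\,U^t_{\lambda,\mu}(x)\,dx=-\partial_{\xi_1}J(\xi).
$$

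Next I would apply Lemma \ref{lma2.2} to the pair $u=U^s_{\lambda,\mu}$, $v=U^t_{\lambda,\mu}$. The well-known tail behaviour of $U_{\lambda,\mu}$ (from the analogue of the limits recalled just after \eqref{U-1}) gives $U^r_{\lambda,\mu}(x)\sim c_r\,|x|^{-r(N-1)/2}\,e^{-r\sqrt{\lambda}|x|}$, so in the notation of Lemma \ref{lma2.2} one has $a=-s(N-1)/2$, $b=s\sqrt{\lambda}$ and $a'=-t(N-1)/2$, $b'=t\sqrt{\lambda}$. When $s<t$ we are in case (i) and obtain $J(\xi)\sim C\,e^{-s\sqrt{\lambda}|\xi|}|\xi|^{-s(N-1)/2}$. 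When $s=t$ the condition $a'\gtreqqless -(N+1)/2$ becomes exactly $s\lesseqqgtr(N+1)/(N-1)$, which produces the three sub-cases of (ii) with $J(\xi)$ of asymptotic order $e^{-s\sqrt{\lambda}|\xi|}|\xi|^{-s(N-1)+(N+1)/2}$, $e^{-s\sqrt{\lambda}|\xi|}|\xi|^{-s(N-1)/2}\ln|\xi|$, or $e^{-s\sqrt{\lambda}|\xi|}|\xi|^{-s(N-1)/2}$ respectively. Differentiating any of these expressions in $\xi_1$ lets the chain rule on $e^{-s\sqrt{\lambda}|\xi|}$ produce the leading factor $-s\sqrt{\lambda}\,\xi_1/|\xi|$, while the $|\xi|^\alpha$ and $\ln|\xi|$ factors only contribute corrections of relative order $|\xi|^{-1}$. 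Multiplying by $-1$ (from $\Theta_{s,t}=-\partial_{\xi_1}J$) turns this into precisely the asserted expressions, with the prefactor $cs$ arising as $-(-s\sqrt{\lambda})$ times the constant from Lemma \ref{lma2.2} (absorbing $\sqrt{\lambda}$ in $c$).

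The main technical point—and the obstacle I would need to address carefully—is justifying that the asymptotic expansion of $J(\xi)$ can be differentiated term by term in $\xi_1$. A clean way is not to differentiate the formal asymptotic but to prove the corresponding asymptotic for $\Theta_{s,t}(\xi)$ directly by repeating the proof of Lemma \ref{lma2.2}. One fixes $\delta\in(0,1/2)$ and splits $\mathbb R^N=A_1\cup A_2\cup A_3$ with $A_1=\{|x|\le\delta|\xi|\}$, $A_2=\{|x+\xi|\le\delta|\xi|\}$, $A_3$ the remaining region. In case (i), $s<t$: on $A_1$ Taylor-expand
$$
U^s_{\lambda,\mu}(x+\xi)=U^s_{\lambda,\mu}(\xi)+s\,U^{s-1}_{\lambda,\mu}(\xi)\,\nabla U_{\lambda,\mu}(\xi)\cdot x+O\!\left(|x|^2\,U^s_{\lambda,\mu}(\xi)\right);
$$
the $U^s_{\lambda,\mu}(\xi)\int_{\mathbb R^N}\partial_{x_1}U^t_{\lambda,\mu}=0$ contribution vanishes by radial symmetry, and the first-order term produces the leading asymptotic through the identity $\int_{\mathbb R^N}x_j\,\partial_{x_1}U^t_{\lambda,\mu}(x)\,dx=-\delta_{1j}\int_{\mathbb R^N}U^t_{\lambda,\mu}$ combined with $\partial_{x_1}U_{\lambda,\mu}(\xi)=U'_{\lambda,\mu}(|\xi|)\xi_1/|\xi|$ and $U'_{\lambda,\mu}/U_{\lambda,\mu}\to-\sqrt{\lambda}$, yielding the claimed $cs\,\xi_1/|\xi|\,e^{-s\sqrt{\lambda}|\xi|}|\xi|^{-s(N-1)/2}$. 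The contributions from $A_2$ and $A_3$ are of strictly smaller order by the same tail estimates used in Lemma \ref{lma2.2}(i) (because $t>s$).

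In case (ii), $s=t$, no such asymmetric concentration is available: both peaks contribute equally and the main mass of the integrand sits along the segment joining $0$ and $-\xi$. Here I would perform a matched Laplace-type analysis on $A_1$ and $A_2$ (they give equal contributions by the symmetry $x\leftrightarrow -\xi-x$ of the integrand up to the $\partial_{x_1}U^t$ factor, whose integral against $U^s(\cdot+\xi)$ produces the $\xi_1/|\xi|$ direction), parameterising along and transverse to $\xi$. Integration along the axis transverse to $\xi$ gives the characteristic Gaussian factor $|\xi|^{(N-1)/2}$, while the longitudinal integral—after substituting the tail behaviour of $U_{\lambda,\mu}^s$ and $\partial_{x_1}U^t_{\lambda,\mu}$—produces a factor $|\xi|^{1-s(N-1)}$, $\ln|\xi|$, or $O(1)$ depending on whether $s$ is less than, equal to, or greater than $(N+1)/(N-1)$; this is precisely the trichotomy appearing in Lemma \ref{lma2.2}(ii). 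Combining these factors with the exponential $e^{-s\sqrt{\lambda}|\xi|}$ from the saddle and with the direction vector $\xi_1/|\xi|$ that arises from $\partial_{x_1}U^t$ being odd in the $x_1$ direction, one recovers the three cases in (ii). The contribution of $A_3$ is shown to be exponentially smaller than $e^{-s\sqrt{\lambda}|\xi|}$ by the same strictness used in Lemma \ref{lma2.2}. The one-dimensional case $N=1$ reduces to the first sub-case of (ii) since $(N+1)/(N-1)=+\infty$, yielding the stated $|\xi|$ polynomial factor.
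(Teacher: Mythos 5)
The paper never proves this lemma: it is stated in the appendix among ``well-known facts'' and used as a black box, so there is no proof of the paper to compare with and your proposal must stand on its own. Your reduction $\Theta_{s,t}(\xi)=-\partial_{\xi_1}J(\xi)$ together with Lemma \ref{lma2.2} does predict the correct exponents, and you are right that an asymptotic relation for $J$ cannot simply be differentiated; but this means the entire burden falls on your ``direct'' argument, and that is where there is a genuine gap.

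The gap is in your treatment of case $(i)$ on the region $A_1=\{|x|\le\delta|\xi|\}$. First, the remainder bound $O\bigl(|x|^2U^s_{\lambda,\mu}(\xi)\bigr)$ in your Taylor expansion is not valid uniformly on $A_1$: the second derivatives are evaluated at intermediate points $\xi+\theta x$, where $U^s_{\lambda,\mu}$ can be exponentially larger than at $\xi$ when $|x|$ is comparable to $\delta|\xi|$. Second, and more seriously, even on a fixed ball $|x|\le R$ (where the expansion is legitimate and where, by the faster decay of $\partial_{x_1}U^t_{\lambda,\mu}$, the bulk of the integral sits) the quadratic remainder integrates against $|\partial_{x_1}U^t_{\lambda,\mu}|$ to a quantity of order $U^s_{\lambda,\mu}(\xi)$, i.e.\ of the \emph{same} order as your linear term $c\,s\,\tfrac{\xi_1}{|\xi|}U^s_{\lambda,\mu}(\xi)$ (and larger when $\xi_1/|\xi|$ is small), so no asymptotic follows. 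A symptom that the linearization misses the leading behaviour is the constant it yields, proportional to $\int U^t_{\lambda,\mu}$, whereas the true constant is the tilted integral $\int e^{-s\sqrt{\lambda}\langle x,\xi/|\xi|\rangle}U^t_{\lambda,\mu}(x)\,dx$: the relevant $x$ are of size $O(1)$, where $e^{-s\sqrt{\lambda}\langle x,\xi/|\xi|\rangle}$ is not close to its first-order expansion. The standard repair is to expand the phase, $|x+\xi|=|\xi|+\langle x,\xi\rangle/|\xi|+O\bigl(|x|^2/|\xi|\bigr)$, keep the whole factor $e^{-s\sqrt{\lambda}\langle x,\xi\rangle/|\xi|}$, pass to the limit by dominated convergence (this is exactly where $t>s$ is needed), and only then integrate by parts in $x_1$, which produces the factor $s\sqrt{\lambda}\,\xi_1/|\xi|$ identically rather than approximately. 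The same caveat applies to your case $(ii)$: the outline (Laplace-type analysis along the segment, transverse factor $|\xi|^{(N-1)/2}$, longitudinal trichotomy at $s=\tfrac{N+1}{N-1}$) names the right mechanism but is only a sketch, and the direction factor $\xi_1/|\xi|$ must again be extracted from an exact identity (oddness in the longitudinal variable or integration by parts), not from a pointwise linearization. Finally, note that as stated the asymptotics can only be meant uniformly for directions with $\xi_1/|\xi|$ bounded away from $0$ (otherwise the error terms need not be negligible compared with the main term); this is a feature of the lemma itself, and is all the paper actually uses.
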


\section{A regularity result}
\renewcommand{\theequation}{B.\arabic{equation}}
\setcounter{equation}{0}
 In the following, we aim to improve the regularity of the remainder term $\phi_\epsilon$ of the first solution component obtained in \eqref{alnot-zero} in Proposition \ref{contraction}, which will be  crucial in estimating the terms $P_{10}$ and $P_{11}$ given in \eqref{L-2}.
\begin{Lem} \label{imp} Let $N=2,3$. Then for any $q>N$,
    \begin{align}
 \left\|\phi_\epsilon\right\|_{C^{1,1-\frac{N}{q}}(\R^N)}\lesssim\left\|\phi_\epsilon\right\|_{W^{2,q}(\R^N)} \lesssim\begin{cases}
       \epsilon^{\frac{2N}{q}} ,\quad &q>2N,\nonumber\\
       e^{-2\sqrt{\omega_{0}}\frac{\rho}{\epsilon}\sin{\frac{\pi}{mk}}}\left(\frac{\rho}{\epsilon}\right)^{-\frac{N-1}{2}},\quad &q\in(N,2N).
    \end{cases}
\end{align}
\end{Lem}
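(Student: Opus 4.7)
The plan is to exploit that $\phi_\epsilon$ solves the first component of \eqref{first}. Since the one-dimensional kernel $K=\{(0,cZ_\epsilon)\}$ sits in the second slot, the first equation carries no Lagrange multiplier and may be written as
\begin{equation*}
-\Delta\phi_\epsilon+V(y)\phi_\epsilon=g_\epsilon,
\end{equation*}
where $g_\epsilon$ is obtained by moving all remaining contributions to the right-hand side:
\begin{equation*}
g_\epsilon:=\Big(3\mu_1\Upsilon_\epsilon^2+\beta\sum_{i=1}^m\Theta_\epsilon^2\big(\widehat{\mathcal{R}}_i\tfrac{y}{\epsilon}\big)\Big)\phi_\epsilon+2\beta\,\Upsilon_\epsilon\sum_{i=1}^m\Theta_\epsilon\big(\widehat{\mathcal{R}}_i\tfrac{y}{\epsilon}\big)\psi_\epsilon\big(\widehat{\mathcal{R}}_i\tfrac{y}{\epsilon}\big)+\mathcal{E}_1+\mathcal{N}_1(\phi_\epsilon,\psi_\epsilon).
\end{equation*}
Assumption (1) on $-\Delta+V$ then yields $\|\phi_\epsilon\|_{W^{2,q}(\R^N)}\lesssim\|g_\epsilon\|_{L^q(\R^N)}$ for every $q>N$, while the Morrey embedding $W^{2,q}(\R^N)\hookrightarrow C^{1,1-N/q}(\R^N)$ (valid since $q>N$) delivers the first inequality of the statement. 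The whole issue is therefore to estimate $\|g_\epsilon\|_{L^q}$.

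The core tool is the scaling identity $\|f(\cdot/\epsilon)\|_{L^q}^q=\epsilon^N\|f\|_{L^q}^q$, so that every factor of a ``bubble at scale $\epsilon$'' contributes $\epsilon^{N/q}$. Combining this with the Sobolev embedding $H^2(\R^N)\hookrightarrow L^q(\R^N)$ (available for any $q\geq 2$ in $N=2,3$), the bounds of Lemma \ref{Phi} for $\Phi_\epsilon,\Psi_\epsilon$, and the joint estimate $\|\phi_\epsilon\|_{H^2}+\|\psi_\epsilon\|_{H^2}\lesssim e^{-2\sqrt{\omega_0}(\rho/\epsilon)\sin(\pi/mk)}(\rho/\epsilon)^{-(N-1)/2}$ from Proposition \ref{contraction}, each term in $g_\epsilon$ is estimated as follows: the linear term $3\mu_1 Y^2\phi_\epsilon$ is controlled by $\|\phi_\epsilon\|_{L^q}\lesssim\|\phi_\epsilon\|_{H^2}$, giving the exponential rate; the bubble couplings $\Theta_\epsilon^2(\cdot/\epsilon)\phi_\epsilon$ and $\Upsilon_\epsilon\Theta_\epsilon(\cdot/\epsilon)\psi_\epsilon(\cdot/\epsilon)$ produce an extra $\epsilon^{N/q}$ factor and are thus subdominant; the representative ``interaction'' term from $\mathcal{E}_1$, namely $\Phi_\epsilon\,U_{\rho,\epsilon}^2(\widehat{\mathcal{R}}_i\cdot/\epsilon)$, is bounded by $\|\Phi_\epsilon\|_{L^\infty}\|U_{\rho,\epsilon}^2(\cdot/\epsilon)\|_{L^q}\lesssim\|\Phi_\epsilon\|_{W^{2,q}}\cdot\epsilon^{N/q}\lesssim\epsilon^{2N/q}$ via Lemma \ref{Phi}(ii) and Morrey used with the same $q$; finally, the higher powers of $\Phi_\epsilon,\Psi_\epsilon$ in $\mathcal{E}_1$ and the nonlinear piece $\mathcal{N}_1$ are of strictly smaller order.

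Splitting the conclusion into the two regimes, for $q\in(N,2N)$ the dominant term is the linear one, yielding the exponential bound $e^{-2\sqrt{\omega_0}(\rho/\epsilon)\sin(\pi/mk)}(\rho/\epsilon)^{-(N-1)/2}$, while for $q>2N$ the convenient majorant $\epsilon^{2N/q}$ coming from the $\Phi_\epsilon\,U_{\rho,\epsilon}^2(\cdot/\epsilon)$ term dominates all other contributions. The terms on the right-hand side that depend linearly on $\phi_\epsilon$ itself carry a coefficient of size $O(\epsilon^{N/q})$; combined with the bound $\|\phi_\epsilon\|_{L^\infty}\lesssim\|\phi_\epsilon\|_{W^{2,q}}$ from Morrey (since $q>N$), they can be absorbed on the left-hand side for $\epsilon$ sufficiently small, closing the argument.

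The most delicate point is precisely this absorption/bootstrap: the estimate of the linear-in-$\phi$ couplings requires using the Sobolev inequality with the correct exponent so that the constant does not blow up when passing from $\|\phi_\epsilon\|_{L^q}$ back to $\|\phi_\epsilon\|_{W^{2,q}}$; smallness of $\epsilon$ and the factor $\epsilon^{N/q}$ produced by the bubble concentration make this possible uniformly in $q>N$. Apart from this step the work is essentially bookkeeping of the many summands in $\mathcal{E}_1$ and $\mathcal{N}_1$, closely parallel to the $L^2$-analysis already carried out in the proof of Lemma \ref{error size}, with each piece evaluated now in $L^q$ through the same scaling arguments.
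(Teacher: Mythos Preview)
Your approach is essentially the paper's: rewrite the first equation as $-\Delta\phi_\epsilon+V\phi_\epsilon=g_\epsilon$, estimate $\|g_\epsilon\|_{L^q}$ term by term using the a~priori bound $\|(\phi_\epsilon,\psi_\epsilon)\|_{H^2}$ from Proposition~\ref{contraction} together with the scaling $\|f(\cdot/\epsilon)\|_{L^q}=\epsilon^{N/q}\|f\|_{L^q}$, and invoke assumption~(1) plus Morrey. One correction: the absorption/bootstrap in your final paragraph is both unnecessary and, as stated, incorrect---the coefficient of $3\mu_1 Y^2\phi_\epsilon$ is \emph{not} $O(\epsilon^{N/q})$, so that term cannot be absorbed; the paper (and you, earlier in your own sketch) instead bound it directly by $\|\phi_\epsilon\|_{L^\infty}\lesssim\|\phi_\epsilon\|_{H^2}$, which is already available, so no circularity arises and the argument closes in one step without any absorption.
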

\begin{proof}
  From Lemma \ref{contraction} and \eqref{first}, we know that there exists $\left(\phi_\epsilon,\psi_{\epsilon}\right)\in K^\bot$ solving
    \begin{align}\label{NEW-EQU}
        \mathcal{L}_{1}\left(\phi_\epsilon,\psi_{\epsilon}\right)=\mathcal{E}_{1}+\mathcal{N}_{1}\left(\phi_\epsilon,\psi_{\epsilon}\right),
    \end{align}
and $$\|(\phi_\epsilon, \psi_\epsilon)\|=O\left(e^{-2\sqrt{\omega_{0}}\frac{\rho}{\epsilon}\sin{\frac{\pi}{mk}}}\left(\frac{\rho}{\epsilon}\right)^{-\frac{N-1}{2}}\right).$$
Moreover, using Sobolev embedding, %$H^{2}(\R^N)\hookrightarrow C^{0,\frac{1}{2}}(\R^N)$,
we derive that %$\left(\phi_\epsilon,\psi_{\epsilon}\right)\in C^{0,\frac{1}{2}}(\R^N)\times C^{0,\frac{1}{2}}(\R^N)$ satisfies \eqref{NEW-EQU}, which implies that $\left(\phi_\epsilon,\psi_{\epsilon}\right)\in L^{\infty}(\R^N)\times L^{\infty}(\R^N)$ with
\begin{align*}
   \left\|\phi_\epsilon\right\|_{L^{\infty}(\R^N) } \lesssim O\left(e^{-2\sqrt{\omega_{0}}\frac{\rho}{\epsilon}\sin{\frac{\pi}{mk}}}\left(\frac{\rho}{\epsilon}\right)^{-\frac{N-1}{2}}\right),\quad\left\|\psi_\epsilon\right\|_{L^{\infty}(\R^N) } \lesssim O\left(e^{-2\sqrt{\omega_{0}}\frac{\rho}{\epsilon}\sin{\frac{\pi}{mk}}}\left(\frac{\rho}{\epsilon}\right)^{-\frac{N-1}{2}}\right).
\end{align*}

Now, we improve the regularity of $\phi_\epsilon$ by utilizing the potential theory. Indeed, from \eqref{NEW-EQU}, we have
\begin{align}\label{new-phi}
   -\Delta \phi_\epsilon+V(y) \phi_\epsilon=&\left(3 \mu_1 \Upsilon_{\epsilon}^2(y)+\beta \sum\limits_{i=1}^{m}\Theta_{\epsilon}^2\left(\widehat{\mathcal{R}}_{i}   \frac{y}{\epsilon}\right)\right) \phi_\epsilon(y)\nonumber\\
&+2 \beta \Upsilon_{\epsilon}(y)\left(\sum_{i=1}^{m}\Theta_{\epsilon}\left(\widehat{\mathcal{R}}_{i}   \frac{y}{\epsilon}\right)  \psi_\epsilon\left(\widehat{\mathcal{R}}_{i}   \frac{y}{\epsilon}\right)\right)+\mathcal{E}_{1}+\mathcal{N}_{1}\left(\phi_\epsilon,\psi_{\epsilon}\right)\nonumber\\
:=&F(y).
\end{align}
Then {for any $q>N$},
\begin{align}\label{new-estimate}
    \left\|F(y)\right\|_{L^{q}(\R^N)}\lesssim&\left\|\Upsilon_{\epsilon}^2\,\phi_\epsilon\right\|_{L^{q}(\R^N)}+\left\|\sum\limits_{i=1}^{m}\Theta_{\epsilon}^2\left(\widehat{\mathcal{R}}_{i}   \frac{y}{\epsilon}\right)\,\phi_\epsilon\right\|_{L^{q}(\R^N)}\nonumber\\
    &+\left\|\Upsilon_{\epsilon}(y)\left(\sum_{i=1}^{m}\Theta_{\epsilon}\left(\widehat{\mathcal{R}}_{i}   \frac{y}{\epsilon}\right)  \psi_\epsilon\left(\widehat{\mathcal{R}}_{i}   \frac{y}{\epsilon}\right)\right)\right\|_{L^{q}(\R^N)}\nonumber\\
    &+\left\|\mathcal{E}_{1}\right\|_{L^{q}(\R^N)}+\left\|\mathcal{N}_{1}\left(\phi_\epsilon,\psi_{\epsilon}\right)\right\|_{L^{q}(\R^N)}.
\end{align}
Recall the definition of $\mathcal{E}_{1}$ given in \eqref{error}. By the estimates of correction terms $\Phi_\epsilon$ and $\Psi_\epsilon
    $ obtained in Lemma \ref{Phi}, a simple computation yields that {for any $q>N$,}
    \begin{align*}
        \left\|\mathcal{E}_{1}\right\|_{L^{q}(\R^N)}\lesssim&\left\|Y\,\Phi_\epsilon^{2}\right\|_{L^{q}(\R^N)}+\left\|\Phi_\epsilon^{3}\right\|_{L^{q}(\R^N)}+\left\|Y(y) \left(\sum\limits_{i=1}^{m}\Psi^{2}_{\epsilon}\left(\widehat{\mathcal{R}_{i}}   \frac{y}{\epsilon}\right)\right)\right\|_{L^{q}(\R^N)}\\
        &+\left\|Y(y)\left(\sum\limits_{i=1}^{m} U_{\rho,{\epsilon}}\left(\widehat{\mathcal{R}_{i}}   \frac{y}{\epsilon}\right) \Psi_{\epsilon}\left(\widehat{\mathcal{R}_{i}}   \frac{y}{\epsilon}\right)\right)\right\|_{L^{q}(\R^N)}\\
        &+\left\|\sum\limits_{i=1}^{m}\Phi_\epsilon(y)U_{\rho,{\epsilon}}^2\left(\widehat{\mathcal{R}_{i}}   \frac{y}{\epsilon}\right)\right\|_{L^{q}(\R^N)}+\left\|\sum\limits_{i=1}^{m}\Phi_\epsilon(y)\Psi_{{\epsilon}}^2\left(\widehat{\mathcal{R}_{i}}   \frac{y}{\epsilon}\right)\right\|_{L^{q}(\R^N)}\\&+\left\|\sum\limits_{i=1}^{m}\Phi_\epsilon(y)U_{\rho,{\epsilon}}\left(\widehat{\mathcal{R}_{i}}   \frac{y}{\epsilon}\right) \Psi_{\epsilon}\left(\widehat{\mathcal{R}_{i}}   \frac{y}{\epsilon}\right)\right\|_{L^{q}(\R^N)}\\\lesssim&\left\|\Phi_\epsilon\right\|_{L^{\infty}(\R^N)}^{2}+\left\|\Psi_\epsilon\right\|_{L^{\infty}(\R^N)}^{2}+\left\|Y\right\|_{L^{\infty}(\R^N)}\left\|\Psi_\epsilon\right\|_{L^{\infty}(\R^N)}\left(\displaystyle{\int_{\R^N}}U_{\rho,{\epsilon}}^{q}(x)\,\epsilon^{N}dx\right)^{\frac{1}{q}}\\
        &+\left\|\Phi_\epsilon\right\|_{L^{\infty}(\R^N)}\left(\displaystyle{\int_{\R^N}}U_{\rho,{\epsilon}}^{q}(x)\,\epsilon^{N}dx\right)^{\frac{1}{q}}\\
        &+\left\|\Phi_\epsilon\right\|_{L^{\infty}(\R^N)}\left\|\Psi_\epsilon\right\|_{L^{\infty}(\R^N)}\left(\displaystyle{\int_{\R^N}}\Psi_{{\epsilon}}^{q}(x)\,\epsilon^{N}dx\right)^{\frac{1}{q}}\\
        &+\left\|\Phi_\epsilon\right\|_{L^{\infty}(\R^N)}\left\|\Psi_\epsilon\right\|_{L^{\infty}(\R^N)}\epsilon^{\frac{N}{q}}\left(\displaystyle{\int_{\R^N}}U_{\rho,{\epsilon}}^{q}(x)\,dx\right)^{\frac{1}{q}}\\\lesssim& \epsilon^{\frac{2N}{q}}+\epsilon^{N}+{\epsilon^{\frac{N}{q}+\frac{N}{2}}}+\epsilon^{\frac{2N}{q}+\frac{N}{2}}\lesssim\epsilon^{\frac{2N}{q}}.
    \end{align*}

    Furthermore, combining the definition of $\mathcal{N}_{1}$ given in \eqref{perturbed} with \eqref{alnot-zero}, we derive that for any $q>N$,
    \begin{align*}
       &\left\|\mathcal{N}_{1}\left(\phi_\epsilon,\psi_\epsilon\right)\right\|_{L^{q}(\R^N)}\\\lesssim& \left\|\phi_\epsilon^{3}\right\|_{L^{q}(\R^N)}+\left\|\phi_\epsilon^{2}\,\Upsilon_\epsilon\right\|_{L^{q}(\R^N)}+\left\|\Upsilon_\epsilon\,\psi_{\epsilon}^{2}\left(\frac{y}{\epsilon}\right)\right\|_{L^{q}(\R^N)}\\
       &+\left\|\phi_\epsilon\,\Theta_{\epsilon}\left(\frac{y}{\epsilon}\right)\psi_{\epsilon}\left(\frac{y}{\epsilon}\right)\right\|_{L^{q}(\R^N)}+\left\|\phi_\epsilon\,\psi_{\epsilon}^{2}\left(\frac{y}{\epsilon}\right)\right\|_{L^{q}(\R^N)}\\
       \lesssim&\left\|\phi_\epsilon\right\|^{2}_{L^{\infty}(\R^N)}\left(\displaystyle{\int_{\R^N}}\phi_\epsilon^{q}\right)^{\frac{1}{q}}+\left\|\phi_\epsilon\right\|^{2}_{L^{\infty}(\R^N)}\left(\displaystyle{\int_{\R^N}}\Upsilon_\epsilon^{q}\right)^{\frac{1}{q}}\\
       &+\left\|\psi_\epsilon\right\|^{2}_{L^{\infty}(\R^N)}\left(\displaystyle{\int_{\R^N}}\Upsilon_\epsilon^{q}\right)^{\frac{1}{q}}+\left\|\psi_\epsilon\right\|_{L^{\infty}(\R^N)}\left\|\phi_\epsilon\right\|_{L^{\infty}(\R^N)}\left(\displaystyle{\int_{\R^N}}\Theta_\epsilon^{q}(x)\,\epsilon^N\right)^{\frac{1}{q}}\\
       &+\left\|\psi_\epsilon\right\|_{L^{\infty}(\R^N)}\left\|\phi_\epsilon\right\|_{L^{\infty}(\R^N)}\left(\displaystyle{\int_{\R^N}}\psi_\epsilon^{q}(x)\,\epsilon^N\right)^{\frac{1}{q}}\\
       \lesssim&\left\|(\phi_\epsilon, \psi_\epsilon)\right\|^{2}=O\left(e^{-4\sqrt{\omega_{0}}\frac{\rho}{\epsilon}\sin{\frac{\pi}{mk}}}\left(\frac{\rho}{\epsilon}\right)^{-{(N-1)}}\right).
    \end{align*}

For the remaining terms in \eqref{new-estimate}, we have
\begin{align*}
    &\left\|\Upsilon_{\epsilon}^2\,\phi_\epsilon\right\|_{L^{q}(\R^N)}+\left\|\sum\limits_{i=1}^{m}\Theta_{\epsilon}^2\left(\widehat{\mathcal{R}}_{i}   \frac{y}{\epsilon}\right)\,\phi_\epsilon\right\|_{L^{q}(\R^N)}+\left\|\Upsilon_{\epsilon}\left(\sum_{i=1}^{m}\Theta_{\epsilon}\left(\widehat{\mathcal{R}}_{i}   \frac{y}{\epsilon}\right)  \psi_\epsilon\left(\widehat{\mathcal{R}}_{i}   \frac{y}{\epsilon}\right)\right)\right\|_{L^{q}(\R^N)}\\\lesssim&\left\|\phi_\epsilon\right\|_{L^{\infty}(\R^N)}+\left\|\phi_\epsilon\right\|_{L^{\infty}(\R^N)}\left(\displaystyle{\int_{\R^N}}\Theta_\epsilon^{2q}(x)\,\epsilon^N\right)^{\frac{1}{q}}+\left\|\psi_\epsilon\right\|_{L^{\infty}(\R^N)}\epsilon^{\frac{N}{q}}\left\|\Upsilon_\epsilon\right\|_{L^{\infty}(\R^N)}\left\|\Theta_\epsilon\right\|_{L^{q}(\R^N)}\\
    \lesssim&\left\|(\phi_\epsilon, \psi_\epsilon)\right\|=O\left(e^{-2\sqrt{\omega_{0}}\frac{\rho}{\epsilon}\sin{\frac{\pi}{mk}}}\left(\frac{\rho}{\epsilon}\right)^{-\frac{N-1}{2}}\right)
\end{align*}

Therefore, we conclude that for any $q>N$,
\begin{align}
    \left\|F(y)\right\|_{L^{q}(\R^N)}\lesssim&\epsilon^{\frac{2N}{q}}+e^{-2\sqrt{\omega_{0}}\frac{\rho}{\epsilon}\sin{\frac{\pi}{mk}}}\left(\frac{\rho}{\epsilon}\right)^{-\frac{N-1}{2}}\nonumber\\
    =&\begin{cases}
       \epsilon^{\frac{2N}{q}} ,\quad &q>2N,\nonumber\\
       e^{-2\sqrt{\omega_{0}}\frac{\rho}{\epsilon}\sin{\frac{\pi}{mk}}}\left(\frac{\rho}{\epsilon}\right)^{-\frac{N-1}{2}},\quad &q\in(N,2N),
    \end{cases}
\end{align}
then from \eqref{new-phi} and the assumption $\left(\mathbf{V}_2\right)$, we derive that
\begin{align}
    \left\|\phi_\epsilon\right\|_{W^{2,q}(\R^N)}   \lesssim\left\|F(y)\right\|_{L^{q}(\R^N)}\lesssim\begin{cases}
       \epsilon^{\frac{2N}{q}} ,\quad &q>2N,\nonumber\\
       e^{-2\sqrt{\omega_{0}}\frac{\rho}{\epsilon}\sin{\frac{\pi}{mk}}}\left(\frac{\rho}{\epsilon}\right)^{-\frac{N-1}{2}},\quad &q\in(N,2N).
    \end{cases}
\end{align}
In addition, for any $q>N$, we know that $W^{2,q}(\R^N)\hookrightarrow C^{1,1-\frac{N}{q}}(\R^N)$ as Sobolev embedding Theorem, then
\begin{align*}
 \left\|\phi_\epsilon\right\|_{C^{1,1-\frac{N}{q}}(\R^N)} \lesssim\begin{cases}
       \epsilon^{\frac{2N}{q}} ,\quad &q>2N,\nonumber\\
     e^{-2\sqrt{\omega_{0}}\frac{\rho}{\epsilon}\sin{\frac{\pi}{mk}}}\left(\frac{\rho}{\epsilon}\right)^{-\frac{N-1}{2}},\quad &q\in(N,2N).
    \end{cases}
\end{align*}

\end{proof}

\bibliography{system}
\bibliographystyle{abbrv}

\end{document}